\renewcommand{\div}{\operatorname{div}}
\newtheorem{thrm}{Theorem}[section]}
\newtheorem{prpstn}[thrm]{Proposition}}
\newtheorem{crllr}[thrm]{Corollary}}
\newtheorem{lmm}[thrm]{Lemma}}
\newtheorem{rmrk}{Remark}[section]}
\newcommand{\diam}{\operatorname{diam}}
\newcommand{\supp}{\operatorname{supp}}
\begin{document}

%%-----------------------------
%%      the top matter
%%-----------------------------
\title{A model for suspension of clusters of particle pairs}% At most 5 thanks
\thanks{ This project has received funding from the European Research Council (ERC) under the European Union’s Horizon 2020 research and innovation program Grant agreement No 637653, project BLOC “Mathematical Study of Boundary Layers in Oceanic Motion”. This work was supported by the SingFlows project, grant ANR-18- CE40-0027 of the French National Research Agency (ANR).}
\author{Amina Mecherbet}\address{Sorbonne Universit\'es, Laboratoire Jacques-Louis Lions (UMR 7598), F-75005, Paris, France
}\address{Universit\'e de Paris, Institut de Math\'ematiques de Jussieu-Paris Rive Gauche (UMR 7586), F-75205, Paris, France}

\begin{abstract}
In this paper, we consider $N$ clusters of pairs of particles sedimenting in a viscous fluid. The particles are assumed to be rigid spheres and inertia of both particles and fluid are neglected. The distance between each two particles forming the cluster is comparable to their radii $\frac{1}{N}$ while the minimal distance between the pairs is of order $N^{-1/2}$. We show that, at the mesoscopic level, the dynamics are modelled using a transport-Stokes equation describing the time evolution of the position $x$ and orientation $\xi$ of the clusters. Under the additional assumption that the minimal distance is of order $N^{-1/3}$, we investigate the case where the orientation of the cluster is initially correlated to its position. In this case, a local existence and uniqueness result for the limit model is provided.
\end{abstract}
%
%\begin{resume} ... \end{resume}
%
\subjclass{76T20, 76D07, 35Q83, 35Q70}
\keywords{Mathematical modelling, Suspensions, Cluster dynamics, Stokes flow, System of interacting particles, Method of reflections}
\maketitle
\section*{Introduction}
We consider the problem of $N$ rigid particles sedimenting in a viscous fluid under gravitational force. The inertia of both fluid and particles is neglected. At the microscopic level, the fluid velocity and the pressure satisfy a Stokes equation on a perforated domain.
The mathematical derivation of models for suspensions in Stokes flow interested a lot of researches. One of the most investigated question is the effective computations of quantities such as the viscosity or the average sedimentation velocity, see for instance
\cite{Batchelor, Einstein, Feuillebois, GVH, Hasimoto, Haines&Mazzucato, HW, LSP,Niethammer&Schubert, Rubinstein&Keller,SP} and all the references therein.
Regarding the analysis of the associated homogenization problem, it has been proved that the interaction between particles leads to the appearance of a Brinkman force in the fluid equation. This Brinkman force depends on the dilution of the cloud but also the geometry of the particles, see \cite{Allaire, Brinkman,DGR,Feuillebois, Hillairet,HMS, Rubinstein}. In the dynamic case, the justification of a mesoscopic model using a coupled transport-Stokes equation has been proved in \cite{JO} where authors show that the interaction between particles is negligible in the dilute case \textit{i.e.} when the minimal distance between particles is larger than $ \frac{1}{N^{1/3}}$. In \cite{Hofer,Mecherbet} the justification has been extended to regimes that are not so dilute but where the minimal distance between particles is still large compared to the particles radii. The coupled equations derived are:
\begin{equation}\label{VS}
\left\{
\begin{array}{rcl}
\partial_t \rho + \div ( (\kappa g + u)\rho) &=&0\,\\
- \Delta u + \nabla p &=& 6\pi r_0\kappa g \rho\,,\\
\div(u)&=&0.
\end{array}
\right.
\end{equation}
Here $u$ is the fluid velocity, $p$ its associated pressure, $\rho$ is the density of the cloud. $r_0=RN$, where $R$ is the particles radii, $g$ the gravity vector. The velocity $\kappa g = \frac{2}{9}R^2(\rho_p-\rho_f) g$ represents the fall speed of a sedimenting single particle under gravitational force. The derivation of this model is a consequence of the method of reflections which consists in approaching the flow around several particles as the superposition of the flows associated to one particle at time, see \cite{Smo}, \cite[Chapter 8]{KK}, \cite{Luke},  \cite[Section 4]{Guazzelli&Morris}, \cite{LLS}, \cite{HV} for more details.\\
% This approximation is possible in the case where the minimal distance between particles is larger than the particles radii. Consequently, the velocity of each particle corresponds to the fall speed of one sedimenting particle $\kappa g $ to which we add the velocity contribution of all the other particles which is smaller but of order one.\\

In this paper, we are interested in the case where the cloud is made up of clusters \textit{i.e.} the case where the minimal distance between the particles is proportional to the particles radii $R$. The main motivation is to show the influence of the clusters configuration on the mean velocity fall.
% The other motivation is to investigate the mathematical derivation of suspension of polymeric fluids where polymers can be seen as a cluster of two particles connected through a rod, see \cite{DE} in the case of \emph{dumbbell} models.
A first investigation in this direction is to consider clusters of pairs of particles where the minimal distance between the particles forming the pair is comparable to their radii. The cluster configuration is determined by the center $x$ and the orientation $\xi$ of the pair. Starting from the microscopic model and assuming the propagation in time of the dilution regime, the first result of this paper is the derivation of a fluid-kinetic model describing the sedimentation of the suspension at a mesoscopic scaling. The fluid-kinetic model obtained couples a Stokes equation for the fluid velocity and pressure $(u,p)$ with a transport equation for the function $\mu(t,x,\xi)$ representing the density of clusters centered in $x$ and having orientation $\xi$ at time $t$, see Theorem \ref{thm}. The mean velocity fall of clusters is formulated through the Stokes resistance matrices while the variation of its orientation involves the gradient of the fluid velocity. In particular, the presence of the gradient of the fluid velocity suggests a similarity with the model of suspension of rod-like particles where the density function depends on the center of the rods $x$ and there orientations $n \in \mathbb{S}^2$, see \cite{DE,HT1, HT2} and the references therein. 

Note that one can reproduce the same arguments for a cloud of clusters constituted of $k \geq 2 $ identical particles by considering the center of mass of the cluster $x=\displaystyle{\frac{1}{k} \underset{p=1}{\overset{k}{\sum}} x_p}$ and $k-1$ orientations  $\xi_q=\frac{x_{q+1} - x_{q}}{2R}$, $q=1,\cdots, k-1$. Extending the assumptions for the general case $k \geq 2$, the limit density $\mu$ and the kinetic equation would depend on $k$ variables. 
However, the relevance of such model can be discussed, in particular with comparison to the models of suspension of polymeric fluids where each polymer is represented by beads connected along a chain. The total number of beads $k$, also called degree of polymerization, can reach for instance $2000$ in the case of ductile materials such as plastic films. Hence, in practice, a fine description of a polymer chain is not suitable and it is more convenient to deal with a coarse model where $k=2$, see \emph{dumbbell} model \cite{DE,LL}.\\

The second result of this paper corresponds to the case where the orientation of the cluster is correlated to its center \textit{i.e.} $\xi=F(t,x)$. Under additional assumptions, see Theorem \ref{thm_bis}, the derived model is a transport equation for $\rho$ coupled to a Stokes equations for  the fluid velocity and pressure $(u,p)$ and a hyperbolic equation for the function  describing the evolution of the cluster orientation $F$. A local existence and uniqueness result for the former system is also presented, see Theorem \ref{thm2}. \\

The starting point is a microscopic model representing sedimentation of $N \in \mathbb{N}^*$ particle pairs in a uniform gravitational field. The pairs are defined as
$$
B^i:= B(x_1^i,R) \cup B(x_2^i,R)\:,\: 1 \leq i \leq N,
$$
where $x_1^i,x_2^i$ are the centers of the $i^{\text{th}}$ pair and  $R$ the radius. We define $(u^N,p^N)$ as the unique solution to the following Stokes problem :
\begin{equation}\label{eq_stokes}
\left \{
\begin{array}{rcl}
-\Delta u^N + \nabla p^N & = &0, \\ 
\div u^N & = & 0,
\end{array}
\text {on $\mathbb{R}^3 \setminus \underset{i=1}{\overset{N}{\bigcup}} \overline{B}^i$, } 
\right.
\end{equation}
completed with the no-slip boundary conditions :
\begin{equation}\label{cab_stokes}
\left \{
\begin{array}{rcl}
u^N & = &  U_1^i \text{ on $ \partial B(x_1^i,R)$}, \\ 
u^N & = &  U_2^i \text{ on $ \partial B(x_2^i,R)$}, \\
\underset{|x| \to \infty}{\lim} |u^N(x)| & = & 0,
\end{array}
\right.
\end{equation}
where $(U_1^i,U_2^i) \in \mathbb{R}^3 \times \mathbb{R}^3  \,,\, 1\leq i \leq N$ are the linear velocities.
In this model, the angular velocity is neglected and we complete the PDE with the motion equation for each couple of particles : 
\begin{equation}\label{ode}
\left\{
\begin{array}{rcl}
\dot{x}_1^i & = & U_1^i, \\ 
\dot{x}_2^i & = & U_2^i.
\end{array}
\right.
\end{equation}
Newton law yields the following equations where inertia is neglected :
\begin{equation}\label{eq_suspension}
\begin{pmatrix}
F_1^i \\ \\
F_2^i
\end{pmatrix} =- \begin{pmatrix}
mg \\ \\
mg
\end{pmatrix},
\end{equation}
where $m$ is the mass of the identical particle adjusted for buoyancy, $g$ the gravitational acceleration, $F_1^i$, $F_2^i$ are the drag forces applied by the fluid on the $i^\text{th}$ particle : 
\begin{eqnarray*}
F_1^i  =  \int_{\partial B(x_1^i,R)} \Sigma(u^N,p^N) n d \sigma &\,,\,&   
F_2^i   = \int_{\partial B(x_2^i,R)} \Sigma(u^N,p^N) n d \sigma,
\end{eqnarray*} 
with $n$ the unit outer normal and $\Sigma(u^N,p^N)=(\nabla u^N + (\nabla u^N)^\top)-p^N \mathbb{I}$ the stress tensor.\\
%We aim to extract the transport-Stokes equation modelling the problem.
%Precisely, we show that the fall speed $\kappa g$, appearing in equation \eqref{c2:VS}, is replaced by the mean velocity of a particle pair sedimenting under gravitational force.\\
%In this paper, we focus only on the derivation of the mesoscopic model. Precisely, we do not tackle the propagation in time of the dilution regime and the mean field approximation. This is due to the appearance of the gradient of the Oseen tensor in the equations which has a singularity order that prevent us from controlling the infinite Wasserstein distance in the spirit of \cite{HJ} and \cite{Hauray}. In fact, the singularity leads to a $\log$ term in the estimates involving the control of the infinite Wasserstein distance, see Proposition \ref{c2:inifnite_cv}.
In order to formulate our results we introduce the main assumptions on the cloud.
\subsection{Assumptions and main results}
%\label{c2:assumption}
We assume that the radius is given by $R=\frac{r_0}{2N}$. In this paper we use the following notations, given a pair of particles $B(x_1,R)$ and $B(x_2,R)$:
\begin{eqnarray*}
x_+:= \frac{1}{2}(x_1+x_2)\:,&\: x_-:= \frac{1}{2}(x_1-x_2)\:,&\: \xi:= \frac{x_-}{R}.
\end{eqnarray*}
Let $T>0$ be fixed. We introduce the empirical density $\mu^N \in \mathcal{P}([0,T]\times \mathbb{R}^3 \times \mathbb{R}^3)$:
 $$
 \mu^N(t,x,\xi) = \frac{1}{N}\sum_1^N \delta_{\left(x_+^i(t),\xi_i(t)\right)}(x,\xi),
 $$
and set $\rho^N$ its first marginal:
\begin{equation}
\rho^N(t,x):= \frac{1}{N} \underset{i}{\sum} \delta_{x_+^i(t)}\,(x).
\end{equation}
We denote by $d_{\min}$ the minimal distance between the centers $x_+^i$:
$$
d_{\min}(t):= {\min}\, \{d_{ij}(t):= |x_+^i(t)- x_+^j(t) |\,,\, i \neq j \}.
$$
We assume that there exists two constants $M_1>M_2>1$ independent of $N$ such that:
\begin{equation}\label{hyp_bound}
M_2 \leq |\xi_i| \leq M_1\: \:,\: i=1,\cdots,N\:\,\: \forall \, t \in [0,T].  
\end{equation}
We assume that $\mu^N$ converges in the sense of measures to $\mu$ \textit{i.e.} for all test function $\psi \in \mathcal{C}_b([0,T] \times \mathbb{R}^3\times \mathbb{R}^3)$ we have:
\begin{equation}\label{conv_mesure}
\int_0^T \int_{\mathbb{R}^3}\int_{\mathbb{R}^3}  \psi(t,x,\xi)\mu^N(t,dx,d\xi) dt \underset{N \to \infty}{\to} \int_0^T \int_{\mathbb{R}^3}\int_{\mathbb{R}^3}  \psi(t,x,\xi)\mu(t,x,\xi)dx\, d\xi\, dt.
\end{equation}
We assume that the first marginal of $\mu$ denoted by $\rho$ is a probability measure such that $\rho \in L^{\infty}(\mathbb{R}^3) \cap L^{1}(\mathbb{R}^3)$.
We introduce $W_\infty(t):=W_\infty(\rho^N(t,\cdot), \rho(t,\cdot))$ the infinite-Wasserstein distance between $\rho^N$ and $\rho$, see \eqref{def_wasserstein} for a definition. We assume that 
\begin{equation}\label{W0}
\underset{t \in [0,T]}{\sup} W_\infty(t) \underset{N \to \infty}{\to} 0.
\end{equation}
For the first result, we assume that there exists a positive constant $\mathcal{E}_1>0$ such that: 
\begin{equation}\label{d_min+}
\underset{t \in [0,T]}{\sup}\,\underset{N \in \mathbb{N}^*}{\sup} \frac{W_\infty^3(t)}{d_{\min}^2(t)} \leq \mathcal{E}_1.
\end{equation}
Regarding the second result, we assume in addition that there exists a positive constant $\mathcal{E}_2>0$ such that:  
\begin{equation}\label{d_min+_bis}
\underset{t \in [0,T]}{\sup}\, \underset{N \in \mathbb{N}^*}{\sup} \frac{W_\infty^3(t)}{d_{\min}^3(t)}\leq \mathcal{E}_2.
\end{equation}
\begin{rmrk}\label{rem1}
Since $\rho \in L^\infty(0;T,L^\infty(\mathbb{R}^3))$, this yields a lower bound for the infinite Wasserstein distance for all $t\in (0,T)$ and all $N\in \mathbb{N}^*$:
\begin{equation}\label{Wasserstein_bound}
\frac{1}{N W_\infty^3(t)} \lesssim \underset{x \in \mathbb{R}^3}{\sup} \frac{\rho^N(t,B(x,W_\infty(t)))}{|B(x,W_\infty(t))|} \lesssim \|\rho\|_{L^\infty(0;T,L^\infty(\mathbb{R}^3))}.
\end{equation}
On the other hand, the definition of the infinite Wasserstein distance ensures that 
\begin{equation}\label{d_min}
W_\infty(t) \geq d_{\min}(t)/2 \,,
\end{equation}
which yields according to \eqref{W0}
\begin{equation}\label{d_min0}
\underset{t \in [0,T]}{\sup} d_{\min}(t) \underset{N \to \infty}{\to} 0.
\end{equation}
Assumption \eqref{d_min+_bis} is only needed for the second Theorem \ref{thm_bis}. Precisely, under assumption \eqref{d_min+}, the minimal distance is at least of order $\frac{C}{\sqrt{N}}$ and $R \ll d_{\min}$. Indeed using \eqref{Wasserstein_bound}, \eqref{d_min+} we have for all $N\in \mathbb{N}^*$
$$
\frac{1}{\sqrt{N}} \leq \|\rho\|_{L^\infty(0;T,L^\infty(\mathbb{R}^3))}^{1/2} \sqrt{\mathcal{E}_2} d_{\min}(t).
$$
Whereas under the additional assumption \eqref{d_min+_bis}, the threshold for the minimal distance is of order $\frac{C}{{N}^{1/3}}$.
\end{rmrk}
Our main results read:
\begin{thrm}\label{thm}
Let $\mu_0 \in L^1(\mathbb{R}^3 \times \mathbb{R}^3)\cap L^\infty (\mathbb{R}^3 \times \mathbb{R}^3)$ a probability measure. Assume that \eqref{hyp_bound}, \eqref{conv_mesure}, \eqref{W0} and \eqref{d_min+} are satisfied. If $r_0\max(\|\rho\|_{L^\infty(0,T; L^\infty(\mathbb{R}^3))},\|\rho\|^{1/3}_{L^\infty(0,T; L^\infty(\mathbb{R}^3))},\|\rho\|^{2/3}_{L^\infty(0,T; L^\infty(\mathbb{R}^3))})$ is small enough, $\mu$ satisfies the following transport equation :
\begin{equation}\label{eq:model}
\left\{
\begin{array}{rcll}
\partial_t \mu + \div_x [ (\mathbb{A}(\xi))^{-1} \kappa g  +  u)\mu]+\div_\xi [ \nabla u \cdot \xi \mu] &=&0\,,& \text{ on } [0,T] \times \mathbb{R}^3 \times \mathbb{R}^3,\\
- \Delta u + \nabla p &=& 6\pi r_0 \kappa \rho g\,, & \text{ on } [0,T] \times\mathbb{R}^3,\\
\div(u)&=&0 \,, & \text{ on } [0,T] \times\mathbb{R}^3,\\
\mu(0,\cdot)&=&\mu_0 \,, & \text{ on } \mathbb{R}^3\times \mathbb{R}^3.
\end{array}
\right.
\end{equation}
\end{thrm}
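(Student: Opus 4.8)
The plan is to pass to the limit in the coupled ODE–Stokes system \eqref{eq_stokes}–\eqref{eq_suspension} via the method of reflections, controlling the interaction between distinct pairs through the assumptions \eqref{hyp_bound}, \eqref{W0}, \eqref{d_min+}. First I would analyze a single isolated pair: the Stokes problem on $\mathbb{R}^3\setminus(\overline{B}(x_1^i,R)\cup\overline{B}(x_2^i,R))$ with prescribed boundary velocities $(U_1^i,U_2^i)$ and prescribed total forces $(-mg,-mg)$. By linearity of Stokes, the velocities are affine functions of the forces via a $6\times 6$ Stokes resistance matrix that, after rescaling by $R$ and using $\xi=x_-/R$, depends only on $\xi$; this is where the matrix $\mathbb{A}(\xi)$ enters, encoding the translational mobility of the pair once the internal constraint $\dot x_1^i-\dot x_2^i$ is accounted for (the rigidity of the cluster is not imposed, but \eqref{hyp_bound} keeps $\xi_i$ in a compact annulus $M_2\le|\xi|\le M_1$ away from the contact singularity, so $\mathbb{A}(\xi)$ and $\mathbb{A}(\xi)^{-1}$ are uniformly bounded and Lipschitz). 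One should also extract, from the far-field expansion of the single-pair solution, the leading Stokeslet term of strength $\sim 6\pi R\,\kappa g$ per particle — equivalently $\sim 6\pi r_0\kappa g/N$ per pair — which will generate the Brinkman-type source $6\pi r_0\kappa\rho g$ in the fluid equation, together with the $\nabla u\cdot\xi$ term governing $\dot\xi_i$, obtained by Taylor-expanding the ambient velocity field across the pair at scale $R$.

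Next I would set up the reflection scheme. Write $u^N=\sum_i v_i + (\text{remainder})$, where $v_i$ is the single-pair solution centered at $x_+^i$ responding both to the gravitational force and to the ambient field produced by all other pairs evaluated at $x_+^i$. The iteration converges provided the cumulative influence of the other $N-1$ pairs at any given pair is small; since a Stokeslet of strength $1/N$ decays like $|x|^{-1}$, the sum $\frac1N\sum_{j\ne i}|x_+^i-x_+^j|^{-1}$ is controlled by $\|\rho\|_{L^\infty}$-type bounds plus a short-range correction governed by $d_{\min}$, and the smallness of $r_0\max(\|\rho\|_\infty,\|\rho\|_\infty^{1/3},\|\rho\|_\infty^{2/3})$ is exactly what makes the Neumann series for the reflections converge with a geometric rate uniform in $N$ and $t$. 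The three powers of $\|\rho\|_\infty$ reflect the three orders in the expansion (Stokeslet, its gradient acting on $\xi$, and the next correction); the combinatorial error estimates here — comparing $\frac1N\sum_{j\ne i}$ to the convolution $\int |x-y|^{-1}\rho(y)\,dy$ — are the technical heart and rely on \eqref{W0}, \eqref{d_min+} and the Wasserstein bookkeeping from Remark \ref{rem1}, in the spirit of \cite{Hofer,Mecherbet}.

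With the velocities identified up to a uniformly small error, the ODEs \eqref{ode} become $\dot x_+^i = (\mathbb{A}(\xi_i))^{-1}\kappa g + u(t,x_+^i) + o(1)$ and $\dot\xi_i = \nabla u(t,x_+^i)\cdot\xi_i + o(1)$, where $u$ solves the limiting Stokes system with source $6\pi r_0\kappa\rho g$. I would then test the empirical measure $\mu^N$ against $\psi\in\mathcal{C}^1_c([0,T]\times\mathbb{R}^3\times\mathbb{R}^3)$, use these ODEs to compute $\frac{d}{dt}\int\psi\,d\mu^N$, and pass to the limit using \eqref{conv_mesure} together with the uniform convergence of the right-hand sides (which needs continuity of $(\mathbb{A})^{-1}$, of $u$ and of $\nabla u$ — the latter from Stokes regularity since $\rho\in L^\infty\cap L^1$ gives $u\in W^{1,\infty}$ up to log corrections, handled as in the cited works) to obtain the weak formulation of \eqref{eq:model}. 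The main obstacle, as always in these derivations, is the uniform-in-$N$ control of the reflection remainder: one must show that the near-field contributions of close pairs do not accumulate, which is precisely where the lower bound $d_{\min}\gtrsim N^{-1/2}$ implied by \eqref{d_min+} and the separation $R\ll d_{\min}$ are used, and where the smallness hypothesis on $r_0\|\rho\|_\infty$ cannot be dispensed with.
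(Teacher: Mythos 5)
Your plan follows essentially the same route as the paper: analysis of the isolated two-sphere Stokes solution with its Oseen-tensor far-field approximation, convergence of the method of reflections under the smallness of $r_0\max(\|\rho\|_{\infty},\|\rho\|_{\infty}^{1/3},\|\rho\|_{\infty}^{2/3})$ with the sums $\frac1N\sum_{j\neq i}d_{ij}^{-k}$ controlled through $W_\infty$ and $d_{\min}$, extraction of the leading-order velocities $\dot x_+^i\sim\mathbb{A}(\xi_i)^{-1}\kappa g+\mathcal{K}^N\rho^N(x_+^i)$ and $\dot\xi_i\sim\nabla\mathcal{K}^N\rho^N(x_+^i)\cdot\xi_i$, and passage to the limit in the weak formulation of the transport equation satisfied by $\mu^N$ using the Wasserstein-controlled convergence of the discrete convolution operator (including the logarithmic correction for the gradient). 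This matches the paper's proof structure (Sections 1--4), with only the expected difference in level of detail, e.g. the paper extracts the velocities via duality arguments with specially chosen test flows.
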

\begin{rmrk}\label{reg_A_1A_2}
The matrix $\mathbb{A}$ is defined as $\mathbb{A}:=A_1+A_2$ where $A_1$ and $A_2$ are the resistance matrices associated to the sedimentation of a couple of identical spheres, see Section \ref{section_resistance_matrice} for the definition. The term $(\mathbb{A})^{-1} \kappa g $ represents the mean velocity of a couple of identical particles sedimenting under gravitational field. We assume herein that $\mathbb{A}^{-1}, A_1, A_2 \in L^{\infty}(\mathbb{R}^3)$.
\end{rmrk}
\begin{rmrk}\label{rem_mu}
In the case where $\mu_0$ is compactly supported with respect to the second variable $\xi$ uniformly in the first variable $x$, local existence and uniqueness of the above coupled system can be shown following the result of \cite[Chapter 8]{theseHofer} for the model of suspension of rod-like particles. In particular, the $L^1$ norm of the spatial density $\rho$ is conserved in time while the $L^\infty$ norm of $\rho(t,\cdot)$ is bounded by $\underset{x\in \mathbb{R}^3}{\sup}|\supp(\mu(t,x,\cdot) | \|\mu(t,\cdot,\cdot)\|_\infty$. This ensures existence and uniqueness for a small time $T$ and $\|\rho\|_{L^\infty(0,T; L^\infty(\mathbb{R}^3))}$ is controlled by $\|\mu_0\|_\infty$ and $\underset{x\in \mathbb{R}^3}{\sup}|\supp(\mu_0(x,\cdot) |$ .
\end{rmrk}
The second result concerns the case where the vectors along the line of centers $\xi_i$ are correlated to the positions of centers $x_+^i$. 
\begin{thrm}\label{thm_bis}
Assume now that $\rho \in W^{1,\infty}(\mathbb{R}^3) \cap W^{1,1}(\mathbb{R}^3)$, $\mathbb{A}^{-1} \in W^{2,\infty}(\mathbb{R}^3)$ and consider the additional assumption \eqref{d_min+_bis}. Assume that there exists a function $F_0\in W^{1,\infty}(\mathbb{R}^3)$ such that $\xi_i(0)= F_0(x_+^i(0))$ for all $ 1 \leq i \leq N$. There exists $\bar{T}\leq T$ independent of $N$ and unique $F^N \in L^\infty(0,\bar{T}; W^{1,\infty}(\mathbb{R}^3))$ such that for all $t \in[0,\bar{T}]$ we have:
$$
\mu^N= \rho^N \otimes \delta_{F^N}\: \:\text{ and $F^N(0,\cdot)= F_0$}.
$$
Moreover, the sequence $(F^N)_N$ admits a limit $F \in L^\infty(0,\bar{T};W^{1,\infty}(\mathbb{R}^3))$. The limit measure $\mu$ is of the form $\mu= \rho \otimes \delta_{F}$ and the triplet $(\rho, F,u)$ satisfies the following system
\begin{equation}\label{eq_limite}
\left\{
\begin{array}{rcll}
\partial_t F+ \nabla F \cdot (\mathbb{A}(F)^{-1}\kappa g+ u) &=& \nabla u \cdot F,& \text{ on $[0,\bar{T}] \times \mathbb{R}^3,$}\\
\partial_t \rho + \div((\mathbb{A}(F)^{-1}\kappa g+ u)\rho)&=&0,&\text{ on $[0,\bar{T}] \times \mathbb{R}^3,$}\\ 
-\Delta u + \nabla p& =& 6\pi r_0 \kappa g \rho,& \text{ on $\mathbb{R}^3$},\\
\div u &=&0,& \text{ on $\mathbb{R}^3$},\\
\rho(0,\cdot)&=&\rho_0,& \text{ on $\mathbb{R}^3$},\\
F(0,\cdot) & =& F_0& \text{ on $\mathbb{R}^3$}.
\end{array}
\right.
\end{equation}
\end{thrm}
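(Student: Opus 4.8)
The plan is to first show that, at the microscopic level, the correlation $\xi_i(t)=F^N(t,x_+^i(t))$ is \emph{exactly} propagated, and then to pass to the limit. The Stokes problem \eqref{eq_stokes}--\eqref{cab_stokes} together with the force balance \eqref{eq_suspension} determines $(\dot x_+^i,\dot\xi_i)$ as functions (smooth away from collisions) of the whole configuration $\{(x_+^j,\xi_j)\}_j$; evaluated on a configuration of the form $\{(x_+^j,G(x_+^j))\}_j$, the $i$-th velocity depends only on $x_+^i$, on the atomic measure $\rho^N=\frac1N\sum_j\delta_{x_+^j}$, and on the profile $G$. Call these reduced functionals $\widehat{\mathcal V}[\rho^N,G]$ and $\widehat{\mathcal W}[\rho^N,G]$. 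If $F^N$ solves the quasilinear transport problem
\begin{equation*}
\partial_t F^N+\widehat{\mathcal V}[\rho^N,F^N]\cdot\nabla F^N=\widehat{\mathcal W}[\rho^N,F^N],\qquad F^N(0,\cdot)=F_0,
\end{equation*}
then $t\mapsto F^N(t,x_+^i(t))$ and $t\mapsto\xi_i(t)$ solve the same ODE with the same initial value, hence coincide by uniqueness; this is precisely $\mu^N=\rho^N\otimes\delta_{F^N}$. So the first assertion reduces to local well-posedness of this transport problem in $L^\infty(0,\bar{T};W^{1,\infty})$, \emph{uniformly in $N$}.

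For this I would use the reflection expansion from the proof of Theorem~\ref{thm}: $\widehat{\mathcal V}[\rho^N,G](x)=\mathbb A(G(x))^{-1}\kappa g+u^N(x)+\mathfrak r^N(x)$ and $\widehat{\mathcal W}[\rho^N,G](x)=\nabla u^N(x)\cdot G(x)+\mathfrak s^N(x)$, where $u^N$ is the Stokes field generated by $6\pi r_0\kappa g\,\rho^N$ and the remainders $\mathfrak r^N,\mathfrak s^N$ together with their first $x$-derivatives vanish as $N\to\infty$. The crucial input is the uniform bound $\|u^N\|_{L^\infty}+\|\nabla u^N\|_{L^\infty}+\|\nabla^2 u^N\|_{L^\infty}\le C$: one splits $u^N$ into its near-field part (a sum of Oseen tensors of strength $\sim 1/N$, whose Hessian near a particle is $\sim N^{-1}d_{\min}^{-3}=O(1)$ \emph{precisely because} of \eqref{d_min+_bis}) and a far-field part, controlled by the smooth Stokes field $u$ of the source $6\pi r_0\kappa g\rho$ — which is $W^{2,\infty}$ since $\rho\in W^{1,\infty}$ — up to a quadrature error governed by $W_\infty(t)$. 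Then one solves for $F^N$ by characteristics and a contraction argument: Gronwall along the flow bounds $\|F^N(t)\|_\infty$, and differentiating the equation once in $x$ gives a Riccati inequality
$$\frac{d}{dt}\|\nabla F^N\|_{\infty}\lesssim \|\nabla^2 u^N\|_\infty+\|\nabla u^N\|_\infty\|\nabla F^N\|_\infty+\|D\mathbb A^{-1}\|_\infty\|\nabla F^N\|_\infty^2+o(1),$$
producing a time $\bar{T}$ independent of $N$ with $\|F^N\|_{L^\infty(0,\bar{T};W^{1,\infty})}\le C$. Uniqueness follows from an $L^\infty$ stability estimate for the transport equation, whose coefficients are Lipschitz once $\nabla F^N\in L^\infty$ (here $\mathbb A^{-1}\in W^{2,\infty}$ supplies the needed regularity).

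It remains to pass to the limit. The uniform $W^{1,\infty}$ bound and the bound on $\partial_t F^N$ coming from the equation give, along a subsequence, $F^N\to F$ in $C([0,\bar{T}];C_{loc})$ and weak-$\ast$ in $L^\infty(0,\bar{T};W^{1,\infty})$; combined with $\mu^N=\rho^N\otimes\delta_{F^N}$ and \eqref{conv_mesure} this forces $\mu=\rho\otimes\delta_F$. Passing to the limit in the transport problem — using $W_\infty(t)\to0$, the convergence $u^N\to u$ established for Theorem~\ref{thm}, and the vanishing of $\mathfrak r^N,\mathfrak s^N$ — shows that $F$ satisfies the first line of \eqref{eq_limite}; the $\rho$-equation and the Stokes system are exactly Theorem~\ref{thm} written for $\mu=\rho\otimes\delta_F$ (take the $x$-marginal of \eqref{eq:model}). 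Local well-posedness of \eqref{eq_limite}, i.e. Theorem~\ref{thm2} (proved by the same characteristics/contraction scheme with the $W^{2,\infty}$ bound on $\mathbb A^{-1}$), makes the limit unique, so the whole sequence converges. \textbf{The main obstacle is the uniform-in-$N$ control of $\nabla^2 u^N$}: it is the only place the sharper separation \eqref{d_min+_bis} ($d_{\min}\gtrsim N^{-1/3}$) is used, and it requires balancing the singular near-field contributions of the discrete Oseen sum against the Wasserstein quadrature error — a delicate estimate already at the heart of the analysis behind \eqref{VS}.
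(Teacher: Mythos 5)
Your construction of $F^N$ is essentially the paper's: the paper also propagates the correlation by solving a quasilinear transport equation for $F^N$ (equation \eqref{correlationN}), with coefficients given by the mesoscopic expansion of the particle velocities, namely $\mathbb{A}(F^N)^{-1}\kappa g+\mathcal{K}^N\rho^N+E^N$ and source $\nabla\mathcal{K}^N\rho^N\cdot F^N+\tilde E^N$, and it obtains a time interval independent of $N$ from a fixed-point argument whose key input is exactly the one you isolate: the uniform $W^{2,\infty}$ bound on the discrete field $\mathcal{K}^N\rho^N$ (Proposition \ref{boundedness}), where the term $W_\infty^3/d_{\min}^3$ appears only for the second derivative and is controlled precisely by \eqref{d_min+_bis}. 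Two small differences: (i) the paper freezes the error fields $E^N,\tilde E^N$ constructed from the actual trajectories (they are divergence-free, $O(d_{\min})$ in $L^\infty$, with merely \emph{bounded} gradients), so only the explicit terms depend on $F$ in the fixed point; your version, with the self-consistent functionals $\widehat{\mathcal V}[\rho^N,G]$, would require redoing the reflection estimates with Lipschitz dependence on the profile $G$, and your claim that the remainders' first derivatives vanish is stronger than what is proved or needed. (ii) For the limit, the paper does not use compactness: it proves a quantitative stability estimate $\|F^N-F\|_\infty\lesssim\int_0^t W_\infty(1+|\log W_\infty|+W_\infty^2/d_{\min}^2)+\|E^N\|_\infty+\|\tilde E^N\|_\infty$ (via Proposition \ref{inifnite_cv} and the arguments of Proposition \ref{stab_F}), where $F$ is the solution of the limit correlation equation \eqref{correlation} with the \emph{prescribed} limit density $\rho$; this gives convergence of the whole sequence directly and then one passes to the limit in the weak formulation of $\rho^N$. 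Your route (Arzel\`a--Ascoli from the uniform $W^{1,\infty}$ and $\partial_t$ bounds, identification of the limit equation, marginal of \eqref{eq:model} for the $\rho$-equation) is a legitimate alternative, at the price of losing the explicit rate.

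The one genuine flaw is the last step: you invoke Theorem \ref{thm2} to make the limit unique and upgrade subsequential to full convergence, but Theorem \ref{thm2} is proved under $F_0\in W^{2,p}$, $\rho_0\in W^{1,p}$ compactly supported with $p>3$, whereas Theorem \ref{thm_bis} only assumes $F_0\in W^{1,\infty}$ and $\rho\in W^{1,\infty}\cap W^{1,1}$, so that uniqueness result is not available here. Moreover you do not need uniqueness of the full coupled system \eqref{eq_limite}: in Theorem \ref{thm_bis} the density $\rho$ (hence $u=\mathcal{K}\rho$) is already fixed by the hypotheses \eqref{conv_mesure}, \eqref{W0}, so it suffices to know uniqueness in $L^\infty(0,\bar T;W^{1,\infty})$ of the correlation equation \eqref{correlation} with these given coefficients, which follows from the same $L^\infty$ stability estimate you (and the paper, in Proposition \ref{existence_FN}) use at the discrete level, since $\mathcal{K}\rho\in W^{2,\infty}$ when $\rho\in W^{1,\infty}\cap W^{1,1}$. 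With that substitution (or with the paper's direct $W_\infty$-stability estimate, which bypasses compactness altogether) your argument closes.
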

We finish with a local existence and uniqueness result for the limit model.
\begin{thrm}\label{thm2}
Let $p>3$, $F_0 \in W^{2,p}(\mathbb{R}^3)$ and $\rho_0 \in W^{1,p}(\mathbb{R}^3)$ compactly supported. There exists $T>0$ and unique triplet $(\rho, F,u)\in L^\infty(0,T;W^{1,p}(\mathbb{R}^3)) \times L^\infty(0,T;W^{2,p}(\mathbb{R}^3)) \times L^\infty(0,T;W^{3,p}(\mathbb{R}^3))$ satisfying \eqref{eq_limite}.
\end{thrm}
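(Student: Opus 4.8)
The plan is to regard \eqref{eq_limite} as a quasilinear coupled transport system for the pair $(\rho,F)$, the fluid velocity being slaved to $\rho$ through the steady Stokes operator: write $u=\mathcal S[\rho]$ for the unique solution decaying at infinity of $-\Delta u+\nabla p=6\pi r_0\kappa g\rho$, $\div u=0$ on $\mathbb R^3$, so that the system becomes $\partial_t F+b\cdot\nabla F=\nabla u\cdot F$ and $\partial_t\rho+\div(b\rho)=0$, with drift $b:=\mathbb A(F)^{-1}\kappa g+u$ and data $(\rho_0,F_0)$. I would then build $(\rho,F)$ by a Banach fixed point in a closed ball $B_R$ of $X_T:=L^\infty(0,T;W^{1,p}(\mathbb R^3))\times L^\infty(0,T;W^{2,p}(\mathbb R^3))$, with $R$ fixed by the data and $T$ small, carrying out the contraction step in the weaker norm of $L^\infty(0,T;L^p)\times L^\infty(0,T;W^{1,p})$ — this two-norm device being unavoidable since $b$ depends on the unknown $F$, so a naive iteration is not contractive in the high norm.

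The construction relies on three families of estimates, all using that $W^{1,p}(\mathbb R^3)$ and $W^{2,p}(\mathbb R^3)$ are Banach algebras for $p>3$. First, whole-space Stokes regularity: $\|\mathcal S[\rho]\|_{W^{3,p}}\lesssim\|\rho\|_{W^{1,p}}+\|\rho\|_{L^1}$, where the gain of two derivatives is classical Calder\'on--Zygmund theory and the control of $u$ and $\nabla u$ themselves in $L^p$ comes from the $|x|^{-1}$ decay of the Oseen tensor — this is exactly where $p>3$ and the compact support of $\rho_0$ are used ($|x|^{-1}\in L^p$ near infinity iff $p>3$, and a compactly supported $W^{1,p}$ density lies in $L^1$); since $\rho$ is transported by the bounded Lipschitz field $b$, $\supp\rho(t)$ stays in a fixed ball and $\|\rho(t)\|_{L^1}$ is conserved on $[0,T]$, so the bound is uniform in time. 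Second, a composition/Moser estimate: with $\mathbb A^{-1}$ bounded and sufficiently regular on the range of $F$ (in the spirit of $\mathbb A^{-1}\in W^{2,\infty}$), the map $F\mapsto\mathbb A(F)^{-1}$ has gradient $D(\mathbb A^{-1})(F)\,\nabla F$ bounded in $W^{1,p}$ on bounded sets of $W^{2,p}$ and is Lipschitz $W^{1,p}\to W^{1,p}$ there, whence $\nabla b\in L^\infty(0,T;W^{1,p})$ and $\nabla b\in L^\infty(0,T;L^\infty)$. Third, the standard linear transport and continuity estimates: for $\partial_t v+b\cdot\nabla v=S$ one has $\|v(t)\|_{W^{k,p}}\le e^{C\int_0^t\|\nabla b\|_{W^{k-1,p}}\,ds}\bigl(\|v_0\|_{W^{k,p}}+\int_0^t\|S\|_{W^{k,p}}\,ds\bigr)$, proved by differentiating and controlling the commutators in $L^p$, and likewise for the continuity equation; the source $\nabla u\cdot F$ of the $F$-equation lies in $W^{2,p}$ because $\nabla u\in W^{2,p}$ and $F\in W^{2,p}$. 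The three ingredients match because the derivative hierarchy closes: $F\in W^{2,p}\Rightarrow\nabla b\in W^{1,p}\Rightarrow\rho\in W^{1,p}\Rightarrow u\in W^{3,p}\Rightarrow\nabla u\in W^{2,p}$, with no loss.

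With these tools, set $\Phi(\rho,F)=(\tilde\rho,\tilde F)$ where $u=\mathcal S[\rho]$, $b=\mathbb A(F)^{-1}\kappa g+u$, and $(\tilde\rho,\tilde F)$ solves $\partial_t\tilde\rho+\div(b\tilde\rho)=0$, $\partial_t\tilde F+b\cdot\nabla\tilde F=\nabla u\cdot\tilde F$ with data $(\rho_0,F_0)$. The estimates above and Gr\"onwall's lemma give $\Phi(B_R)\subset B_R$ for suitable $R$ and $T$ depending only on $\|\rho_0\|_{W^{1,p}}$ and $\|F_0\|_{W^{2,p}}$. For the contraction, one writes the equations for $\rho_1-\rho_2$ and $F_1-F_2$ of two images and uses that Stokes gains two derivatives (so $\|u_1-u_2\|_{W^{2,p}}\lesssim\|\rho_1-\rho_2\|_{L^p}$, the $L^1$-contribution again handled via the common compact support), the Lipschitz bound for $F\mapsto\mathbb A(F)^{-1}$ in $W^{1,p}$, and $L^p$/$W^{1,p}$ stability of transport; this yields a closed Gr\"onwall inequality for $\|\rho_1-\rho_2\|_{L^p}+\|F_1-F_2\|_{W^{1,p}}$ and hence a contraction factor below $1$ for $T$ small. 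As $W^{k,p}$ is reflexive, $B_R$ is closed in the weaker topology, so the contraction principle produces a unique fixed point $(\rho,F)\in B_R\subset X_T$, and then $u=\mathcal S[\rho]\in L^\infty(0,T;W^{3,p})$; continuity in time of transported Sobolev functions upgrades $L^\infty_t$ to $C_t$ if desired. Uniqueness within $X_T$ follows from the same difference estimate applied to any two solutions, which lie in some $B_R$ on a short time interval.

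The main obstacle is the quasilinearity through $\mathbb A(F)^{-1}$: the field transporting both $\rho$ and $F$ is built from $F$ itself, so one cannot invoke linear transport theory with a prescribed field and the natural high-norm iteration is not a contraction; the remedy — boundedness in $W^{1,p}\times W^{2,p}$ together with contraction in $L^p\times W^{1,p}$, reconciled through reflexivity — is classical for quasilinear transport systems but requires performing the commutator and composition estimates at precisely the matched regularity, which is why the hierarchy $(\rho,F,u)\in W^{1,p}\times W^{2,p}\times W^{3,p}$ and the condition $p>3$ (Banach algebra property, and $u\in L^p$ for the whole-space Stokes problem with compactly supported source) are the natural hypotheses. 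A secondary technical point is precisely this behaviour of the Stokes solution at infinity, handled via the explicit Oseen kernel and the at-most-linear growth of $\supp\rho(t)$.
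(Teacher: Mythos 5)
Your proposal is correct and follows essentially the same strategy as the paper: a two-norm Picard/Banach fixed point with uniform bounds in $W^{1,p}\times W^{2,p}\times W^{3,p}$ and Cauchy/contraction estimates in $L^p\times W^{1,p}\times W^{2,p}$, built on whole-space Stokes regularity for the Oseen kernel (where $p>3$ and the propagated compact support of $\rho$ supply the low-order $L^p$ bounds), $W^{k,p}$ transport and continuity estimates with Gr\"onwall, and Lipschitz/composition bounds for $F\mapsto\mathbb{A}(F)^{-1}$. The only difference is organizational: you freeze the full drift and iterate on the pair $(\rho,F)$, solving only linear equations at each step (so your invariant set must carry the support/$L^1$ constraint on $\rho$ explicitly), whereas the paper iterates on $u$ through $u\mapsto F(u)\mapsto\rho(u)\mapsto\mathcal{K}\rho(u)$, solving the quasilinear $F$-equation exactly at each step via Propositions \ref{stab_F} and \ref{stab_rho}.
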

As in \cite{Mecherbet}, the idea of proof of Theorem \ref{thm} and \ref{thm_bis} is to provide a derivation of the kinetic equation satisfied weakly by $\mu^N$. This is done by computing the first order terms of the velocities of each pair:
\begin{equation}\label{velocity_fall}
\left\{
\begin{array}{rcl}
\dot{x}_+^i& \sim &(\mathbb{A}(\xi_i))^{-1} \kappa g + \frac{6\pi r_0}{N} \underset{j\neq i}{\sum} \Phi(x_+^i-x_+^j)\kappa g,\\
\dot{\xi}_i & \sim &  \left( \frac{6\pi r_0}{N} \underset{j\neq i}{\sum} \nabla \Phi(x_+^i-x_+^j)\kappa g \right)\cdot \xi_i  .
\end{array}
\right.
\end{equation}
The interaction force $\Phi$ is the Oseen tensor, see formula \eqref{Oseen}. 
This development is a corollary of the method of reflections which consists in approaching the solution $u^N$ of $2N$ separated particles by the superposition of fields produced by the isolated $2N$ particle solutions. We refer to \cite{Smo}, \cite{Luke}, \cite[Chapter 8]{KK} and \cite[Section 4]{Guazzelli&Morris}, \cite{LLS} for an introduction to the topic. We also refer to \cite{HV} where a converging method of reflections is developed and is used in \cite{Hofer}. In this paper we reproduce the same method of reflections developed in \cite[Section 3]{Mecherbet}. However this method is no longer valid in the case where the minimal distance is comparable to the particle radii. The idea is then to approach the velocity field $u^N$ by the superposition of fields produced by the isolated $N$ couple of particles $B^i=B(x_1^i,R) \sqcup B(x_2^i,R)$. This requires an analysis of the solution of the Stokes equation past a pair of particles. In particular, we need to show that these special solutions have the same decay rate as the Stokeslets, see \cite[Section 2.1]{Mecherbet}. Precisely, in Section 1, we prove that the solution of the Stokes equation past a pair of particles can be approached by the Oseen tensor at first order. \\
The convergence of the method of reflections is ensured under the condition that the minimal distance $d_{\min}$ between the centers $x_+^i$ satisfies 
$$
\frac{W_\infty^3}{d_{\min}}+ \frac{W_\infty^3}{d_{\min}^2} < + \infty\,,
$$
and that the distance $|x_1^i -x_2^i|$ for each pair satisfies formula \eqref{hyp_bound}.\\
In this paper, we focus only on the derivation of the mesoscopic model. Precisely, we do not tackle the propagation in time of the dilution regime and the mean field approximation. We provide in Propositions \ref{conservation_distance} and \ref{boundedness} some estimates showing that the control on the minimal distance $d_{\min}$ depends on the control on the infinite Wasserstein distance $W_\infty(\rho^N,\rho)$.
However, the gradient of the Oseen tensor appearing in equation \eqref{velocity_fall} leads to a log term in the estimates involving the control of $W_\infty(\rho^N,\rho)$, see Proposition \ref{inifnite_cv}. This prevents us from performing a Gronwall argument in order to prove the mean field approximation in the spirit of \cite{Hauray,HJ}.
\subsection{Outline of the paper}
The remaining sections of this paper are organized as follows. In section 1 we present an analysis of the particular solution of two translating spheres in a Stokes flow. The main result of this section is the justification of the approximation of this particular solution using the Oseen tensor and proving some decay properties similar to the Stokeslets.
In section 2 we present and prove the convergence of the method of reflections using the estimate of Appendix A. We also present two particular cases of the application of this method which are useful later.
In section 3 we compute the particle velocities $(\dot{x}_+^i, \dot{\xi}_i)_{1 \leq i \leq N}$ using the estimates provided in the previous section.\\
Section 4 is devoted to the proof of the first Theorem \ref{thm}. Precisely, we prove that the discrete density $\mu^N$ satisfies weakly a transport equation \eqref{ff_VS_N} which can be seen as a discrete version of the limit equation \eqref{eq:model}. In particular, equation \eqref{ff_VS_N} is formulated using a discrete convolution operator $\mathcal{K}^N \rho^N \sim \Phi*\rho^N$ defined rigorously in Section 4. The convergence proof is obtained by showing that $\mathcal{K}^N \rho^N$ converges to the continuous convolution operator $\mathcal{K} \rho = \Phi*\rho$. Convergence estimates of $\mathcal{K}^N\rho^N -\mathcal{K}\rho$ are provided in the Appendix B.\\
Section 5 is devoted to the proof of Theorems \ref{thm_bis} and \ref{thm2}. The first step is to prove local existence and uniqueness results for the correlation function $F^N$ solution of \eqref{correlationN} and also for $F$ the solution of the hyperbolic equation \eqref{correlation}. The idea is to apply a fixed-point argument using some stability estimates provided in the last Appendix C. The last part of Section 5 concerns the convergence of the microscopic model to the mesoscopic model. This convergence result is obtained by showing that the sequence $F^N$ converges is some sense to $F$.
\subsection{Notations}
In this paper, $n$ always refers to the unit outer normal to a surface and $d\sigma $ denotes the measure integration on the surface of the particles.\\
We recall the definition of the Green's function for the Stokes problem $(\mathcal{U},\mathcal{P})$ where $\mathcal{U}$ is also called the Oseen tensor, See \cite[Formula (IV.2.1)]{Galdi} or \cite[Section 2.4.1]{KK}. 
\begin{eqnarray}\label{Oseen}
\Phi(x)= \frac{1}{8\pi} \left (\frac{\mathbb{I}}{|x|}+\frac{x\otimes x}{|x|^3} \right ),& \displaystyle{P(x)= \frac{1}{4\pi} \frac{x}{|x|^3}}.
\end{eqnarray}
Given two probability measures $\nu_1$, $\nu_2$, we define the infinite Wasserstein distance as 
$$
W_\infty(\nu_1,\nu_2)
:={\inf} \Big \{\pi -  \text{esssup} |x-y|\:,\: \pi \in \Pi(\nu_1,\nu_2) \Big  \},
$$
where $\Pi(\nu_1,\nu_2)$ is the set of all probability measures on $\mathbb{R}^3 \times \mathbb{R}^3$ with first marginal $\nu_1$ and second marginal $\nu_2$. In the case where $\nu_1$ is absolutely continuous with respect to the Lebesgue measure, then according to \cite{Champion} the following definition holds true
\begin{equation}\label{def_wasserstein}
W_\infty(\nu_1,\nu_2)
:={\inf} \Big \{\nu_1 -  \text{esssup} |T(x)-x|\:,\: T : \text{supp } \nu_1 \to \mathbb{R}^3 \,,\, \nu_2 =T\# \nu_1\Big  \},
\end{equation} 
In particular, this distance is well adapted to the estimates of the discrete convolution operator $\mathcal{K}^N \rho^N$ defined in \eqref{def_K^N}. Precisely, the infinite Wasserstein distance allows to localise the singularity of the Oseen tensor and is closely related to the minimal distance $d_{\min}$. We refer also to \cite{Hauray,HJ, Champion, Mecherbet} for more details.\\
Given a couple of velocities $(U_1,U_2)\in \mathbb{R}^3 \times \mathbb{R}^3$ we use the following notations
$$
U_+:=\frac{U_1+U_2}{2} , U_-:=\frac{U_1-U_2}{2}.
$$
Finally, in the whole paper we use the symbol $\lesssim$ to express an inequality with a multiplicative constant independent of $N$ and depending only on $r_0$, $ \|\rho\|_{L^\infty(0,T; L^\infty(\mathbb{R}^3))}$, $\mathcal{E}_1$, $\mathcal{E}_2$ and eventually on $\kappa |g|$ which is uniformly bounded, see \cite{Mecherbet}.
\section{Two translating spheres in a Stokes flow}
In this section, we focus on the analysis of the Stokes problem in $\mathbb{R}^3$ past a pair of particles. Given $x_1, x_2 \in \mathbb{R}^3$ and $R_1,R_2>0,$ such that $|x_1-x_2|>R_1+R_2,$ we consider two spheres $B_\alpha:=B(x_\alpha,R_\alpha)$ $\alpha=1,2$ and focus on the following Stokes problem:
\begin{equation}\label{Stokes_pair}
\left \{
\begin{array}{rcl}
-\Delta u + \nabla p & = &0, \\ 
\div u & = & 0,
\end{array}
\text { on $\mathbb{R}^3 \setminus \bar{B}_1\cup \bar{B}_2$, } 
\right.
\end{equation}
completed with the no-slip boundary conditions:
\begin{equation}\label{cab_pair}
\left \{
\begin{array}{rcl}
u & = &  U_\alpha, \text{ on $ \partial B_\alpha$, $\alpha=1,2$}, \\
\underset{|x| \to \infty}{\lim} |u(x)| & = & 0,
\end{array}
\right.
\end{equation}
where $U_\alpha \in \mathbb{R}^3$ for $\alpha=1,2$. Classical results on the Steady Stokes equations for exterior domains (see \cite[Chapter V]{Galdi} for more details) ensure the existence and uniqueness of equations \eqref{Stokes_pair} -- \eqref{cab_pair}.
In this section, we aim to describe the velocity field $u$ in terms of the force applied by the fluid on the particles defined as: 
$$
F_\alpha  :=  \int_{\partial B_\alpha} \Sigma(u,p) n d \sigma \,\:,\: \alpha=1,2.
$$
We refer to the paper \cite{Jeffrey&Onishi} for the following statements. Neglecting angular velocities and torque we emphasize that there exists a linear mapping called resistance matrix satisfying:
\begin{equation}
\begin{pmatrix}
F_1 \\
F_2
\end{pmatrix}
= -3\pi (R_1+R_2)
\begin{pmatrix}
A_{11} & A_{12} \\
A_{21} & A_{22}
\end{pmatrix}
\begin{pmatrix}
U_1 \\
U_2
\end{pmatrix},
\end{equation}
where $A_{\alpha \beta}$, $1\leq \alpha, \beta\leq 2$, are $3\times3$ matrices depending only on the non-dimensionalized centre-to-centre separation $s$ and the ratio of the spheres' radii $\lambda$:
\begin{eqnarray*}
s:= 2 \frac{x_1-x_2}{R_1+R_2},& \displaystyle{\lambda =\frac{R_1}{R_2}},
\end{eqnarray*}
each of these matrices is of the form:
\begin{equation}\label{formule_mat}
A_{\alpha \beta}:= g_{\alpha, \beta}(|s|,\lambda) \mathbb{I} + h_{\alpha, \beta}(|s|,\lambda) \frac{s\otimes s}{|s|^2},
\end{equation}
where $\mathbb{I}$ is the $3\times 3 $ identity matrix and $g_{\alpha, \beta}$, $h_{\alpha, \beta}$ are scalar functions.  We refer to the paper of Jeffrey and Onishi \cite{Jeffrey&Onishi} where the authors provide a development formulas for $g_{\alpha, \beta}$ and $h_{\alpha, \beta}$  given by a convergent power series of $|s|^{-1}$. 
Note that the matrices satisfy 
\begin{equation}\label{symetry}
\begin{array}{rcl}
A_{22}(s,\lambda) &=&A_{11}(s,\lambda^{-1}) , \\
A_{12}(s,\lambda)&=&A_{21}(s,\lambda), \\
A_{12}(s,\lambda)&=&A_{12}(s,\lambda^{-1}).
\end{array}
\end{equation}
Inversly, there exists also a linear mapping called mobility matrix such that 
\begin{equation}
\begin{pmatrix}
U_1 \\
U_2
\end{pmatrix}
=- \frac{1}{3\pi (R_1+R_2)}
\begin{pmatrix}
a_{11} & a_{12} \\
a_{21} & a_{22}
\end{pmatrix}
\begin{pmatrix}
F_1 \\
F_2
\end{pmatrix}.
\end{equation}
The matrices $a_{\alpha,\beta}$ depend on the same parameters as matrices $A_{\alpha,\beta}$ and satisfy a formula analogous to \eqref{formule_mat}. They are also symmetric in the sense of formula \eqref{symetry}. \\
The resistance and mobility matrices satisfy the following formula: 
\begin{equation}\label{inversion}
\begin{pmatrix}
A_{11} & A_{12} \\ A_{21} & A_{22}
\end{pmatrix} \begin{pmatrix}
a_{11} & a_{12} \\ a_{21} & a_{22}
\end{pmatrix} =
\begin{pmatrix}
\mathbb{I} & 0 \\ 0 & \mathbb{I}
\end{pmatrix} ,
\end{equation}
Again, we refer to \cite{Jeffrey&Onishi} for more details.
\subsection{Restriction to the case of two identical spheres}\label{section_resistance_matrice}
We simplify the study by assuming that $R_1=R_2=R$ \textit{i.e.} $\lambda=1$. This means that the resistance matrix depends only on the parameter $s$ which becomes:
$$
s= \frac{x_1-x_2}{R}= 2 \,\xi,
$$ 
and we have: $$A_{22}(s,1)=A_{11}(s,1).$$ 
Hence we reformulate the resistance matrix as follows:
\begin{equation}\label{resistance}
\begin{pmatrix}
F_1 \\
F_2
\end{pmatrix}
= -6\pi R
\begin{pmatrix}
A_{1}(\xi) & A_{2}(\xi) \\
A_{2}(\xi) & A_{1}(\xi)
\end{pmatrix}
\begin{pmatrix}
U_1 \\
U_2
\end{pmatrix},
\end{equation}
and the mobility matrix: 
\begin{equation}
\begin{pmatrix}
U_1 \\
U_2
\end{pmatrix}
= -(6\pi R)^{-1}
\begin{pmatrix}
a_{1}(\xi) & a_{2}(\xi) \\
a_{2}(\xi) & a_{1}(\xi)
\end{pmatrix}
\begin{pmatrix}
F_1 \\
F_2
\end{pmatrix}.
\end{equation}
Formula \eqref{inversion} yields the following relations 
\begin{equation}\label{inversion_bis}
\left\{
\begin{array}{c}
A_1 a_1+A_2 a_2= \mathbb{I}, \\
A_1 a_2+ A_2 a_1 = 0.
\end{array}
\right.
\end{equation}
We are interested in providing a formula for the velocity $u$ and showing some decay properties. In this paper we use the notation $(U[U_1,U_2],P([U_1,U_2])$ for the unique solution to 
\begin{equation*}
\left \{
\begin{array}{rcl}
-\Delta U[U_1,U_2] + \nabla P[U_1,U_2] & = &0, \\ 
\div U[U_1,U_2] & = & 0,
\end{array}
\text { on $\mathbb{R}^3 \setminus \bar{B}_1\cup \bar{B}_2$, } 
\right.
\end{equation*}
completed with the no-slip boundary conditions:
\begin{equation*}
\left \{
\begin{array}{rcl}
U[U_1,U_2] & = &  U_\alpha, \text{ on $ \partial B_\alpha$, $\alpha=1,2$}, \\
\underset{|x| \to \infty}{\lim} |U[U_1,U_2](x)| & = & 0,
\end{array}
\right.
\end{equation*}
Note that there is no ambiguity regarding the dependence of the solution $U[U_1,U_2]$ with respect to $x_+$ and $\xi$. Indeed, in this paper, since we consider the solutions $U[U_1^i,U_2^i]$ associated to each cluster $B_i$ with some velocities $U_1^i,U_2^i$, the dependence with respect to the centers $x_+^i$ and orientations $\xi_i$ is implicitly given by the dependence of the velocities with respect to $i$.
The main result of the section is the following
\begin{prpstn}
Denote by $\xi:= \frac{x_-}{R}$. Assume that there exists $M_1>1$ such that $|\xi| <M_1$. There exists a vector field $\mathcal{R}[U_1,U_2]$ depending on $U_1,U_2,\xi,x_+$ such that for all $|x-x_+|>4M_1R$ we have
\begin{equation}\label{def_formule}
U[U_1,U_2](x)=-\Phi(x_+-x)(F_1+F_2)+\mathcal{R}[U_1,U_2](x),
\end{equation}
Moreover, there exists a positive constant independent of $U_1,U_2,\xi,x_+$ and depending only on $M_1$ such that for all $|x-x_+|>4M_1R$ we have
\begin{equation}\label{decay_rate_R}
\left|\nabla^\beta \mathcal{R}[U_1,U_2](x)\right| \leq C(M_1) R^2 \frac{|U_1|+|U_2|}{|x-x_+|^{2+|\beta|}}, \: \forall \, \beta \in \mathbb{N}^3.
\end{equation}
The unique solution $(U[U_1,U_2],P[U_1,U_2])$ satisfies the following decay property with $C(M_1)$ independent of $x_+$, $\xi$, $U_1$ and $U_2$.
\begin{eqnarray}\label{decay_rate}
\left|\nabla^\beta U[U_1,U_2](x)\right|\leq C(M_1) R \frac{|U_1|+|U_2|}{|x-x_+|^{1+|\beta|}},&\displaystyle{  \left|\nabla^\beta P[U_1,U_2](x)\right|\leq C(M_1) R \frac{|U_1|+|U_2|}{|x-x_+|^{2+|\beta|}},} \: \forall \, \beta \in \mathbb{N}^3.
\end{eqnarray}

\end{prpstn}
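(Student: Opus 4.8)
The plan is to combine the Green representation formula for the steady Stokes system in the exterior domain $\Omega:=\mathbb{R}^3\setminus(\bar B_1\cup\bar B_2)$ with a Taylor expansion of the Oseen tensor about the midpoint $x_+$. Write $\sigma_\alpha:=\Sigma\big(U[U_1,U_2],P[U_1,U_2]\big)\,n$ for the trace of the stress on $\partial B_\alpha$, so that $F_\alpha=\int_{\partial B_\alpha}\sigma_\alpha\,d\sigma$. Since there is no body force and the boundary datum is a purely translational rigid motion on each closed surface $\partial B_\alpha$, the double-layer term in the representation formula vanishes at every point of $\Omega$ (the classical identity for the Stokes double layer of a constant density over a closed surface, see \cite[Chapter V]{Galdi}), so one is left with the pure single-layer representation
\begin{equation*}
U[U_1,U_2](x)=-\sum_{\alpha=1}^{2}\int_{\partial B_\alpha}\Phi(x-y)\,\sigma_\alpha(y)\,d\sigma(y),\qquad P[U_1,U_2](x)=-\sum_{\alpha=1}^{2}\int_{\partial B_\alpha}P(x-y)\cdot\sigma_\alpha(y)\,d\sigma(y),
\end{equation*}
valid for $x\in\Omega$, with $\Phi,P$ as in \eqref{Oseen} (up to the fixed sign convention for the outer normal, which produces the minus sign in \eqref{def_formule}).

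The key technical input — and the main obstacle — is the a priori bound $\sum_{\alpha}\int_{\partial B_\alpha}|\sigma_\alpha|\,d\sigma\lesssim R\,(|U_1|+|U_2|)$, with a constant depending only on $M_1$ (and on the spheres staying separated, which holds here). I would prove it by rescaling $x\mapsto(x-x_+)/R$, which turns the problem into the exterior Stokes problem around two unit balls centred at $\pm\xi$, $|\xi|<M_1$; since the stress scales like $R^{-1}$ and the surface measure like $R^{2}$, the bound reduces to a uniform estimate $\lesssim|U_1|+|U_2|$ for the rescaled problem. That follows from the energy identity $\tfrac12\int|\nabla\tilde u+(\nabla\tilde u)^{\top}|^{2}=\sum_\alpha U_\alpha\cdot\tilde F_\alpha$ (multiply the Stokes system by the solution and integrate by parts, the boundary term at infinity vanishing by decay), Korn's inequality, and interior and boundary elliptic regularity for the Stokes system near the smooth spheres, the constants being uniform over the compact family of configurations of two separated unit balls with $|\xi|<M_1$. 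One must be careful here: as $|\xi|\to1^{+}$ the lubrication forces, hence $\int_{\partial B_\alpha}|\sigma_\alpha|$, blow up, so the estimate genuinely requires the standing separation of the pair (it is available in the applications through \eqref{hyp_bound}).

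Granting this, the conclusion is a Taylor expansion about $x_+$. For $|x-x_+|>4M_1R$ and $y\in\partial B_\alpha$ one has $|y-x_+|\le|y-x_\alpha|+|x_\alpha-x_+|=R+R|\xi|\le2M_1R<\tfrac12|x-x_+|$, hence every $z$ on the segment $[x_+,y]$ satisfies $|x-z|\ge\tfrac12|x-x_+|$, so the homogeneity bounds $|\nabla^{k}\Phi(w)|\lesssim|w|^{-(1+k)}$ and $|\nabla^{k}P(w)|\lesssim|w|^{-(2+k)}$ hold along $[x_+,y]$. Splitting $-\Phi(x-y)=-\Phi(x-x_+)+\big(\Phi(x-x_+)-\Phi(x-y)\big)$ and summing over $\alpha$ gives \eqref{def_formule} with
\begin{equation*}
\mathcal{R}[U_1,U_2](x)=\sum_{\alpha=1}^{2}\int_{\partial B_\alpha}\big(\Phi(x-x_+)-\Phi(x-y)\big)\,\sigma_\alpha(y)\,d\sigma(y),
\end{equation*}
which depends only on $U_1,U_2,\xi,x_+$. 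The mean value theorem together with $|x-z|\ge\tfrac12|x-x_+|$ gives $\big|\nabla_x^{\beta}\big(\Phi(x-x_+)-\Phi(x-y)\big)\big|\lesssim M_1R\,|x-x_+|^{-(2+|\beta|)}$, and multiplying by the traction bound yields \eqref{decay_rate_R}.

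Finally \eqref{decay_rate} follows by adding the Stokeslet term to the remainder. Since $|F_1+F_2|\le\sum_\alpha\int_{\partial B_\alpha}|\sigma_\alpha|\,d\sigma\lesssim R(|U_1|+|U_2|)$ (equivalently, by \eqref{resistance} and the boundedness of $A_1+A_2$), one has $\big|\nabla^{\beta}\Phi(x-x_+)(F_1+F_2)\big|\lesssim R(|U_1|+|U_2|)\,|x-x_+|^{-(1+|\beta|)}$, and the remainder contributes a lower order term since it carries an extra factor $R/|x-x_+|<(4M_1)^{-1}$; for the pressure one estimates the single-layer formula directly, using $|\nabla_x^{\beta}P(x-y)|\lesssim|x-x_+|^{-(2+|\beta|)}$ for $y\in\partial B_\alpha$. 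The remaining points — vanishing of the double layer, convergence of the Taylor series, and the homogeneity bounds on $\Phi$ and $P$ — are standard.
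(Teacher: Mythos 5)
Your argument is correct in substance but takes a genuinely different route from the paper. You represent the solution by boundary-layer potentials: since the boundary data are constants, the double layer vanishes at exterior points, leaving a pure single-layer potential, and \eqref{def_formule}, \eqref{decay_rate_R}, \eqref{decay_rate} then follow from a Taylor expansion of $\Phi$ about $x_+$ combined with the traction bound $\sum_\alpha\int_{\partial B_\alpha}\left|\Sigma(u,p)n\right|d\sigma\lesssim R\left(|U_1|+|U_2|\right)$. The paper never touches surface tractions: after rescaling it truncates $u$ outside $B(0,3M_1)$, corrects the divergence with a Bogovskii operator so that the truncated field solves a whole-space Stokes system with a source $f$ supported in an annulus, identifies the Stokeslet coefficient as $-(F_1+F_2)$ by integrating $f$, and controls the Taylor remainder by duality, bounding $\|f\|_{\dot{H}^{-1}}$ by the Dirichlet energy, which is in turn estimated through the resistance matrices \eqref{resistance} and their assumed boundedness (Remark \ref{reg_A_1A_2}). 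The trade-off is as follows: the paper's route needs only the energy estimate, hence only the $L^\infty$ bound on $A_1,A_2$, whereas your $L^1$ traction bound requires Stokes regularity up to the boundary with constants uniform over the admissible configurations, and this genuinely uses the lower separation bound $M_2>1$ from \eqref{hyp_bound} (the family $\{1<|\xi|<M_1\}$ of rescaled configurations is not compact, as you correctly flag); in exchange, your representation yields the pressure decay in \eqref{decay_rate} directly, which the paper leaves essentially implicit. Two small repairs: the energy identity by itself is circular for your purposes, since it expresses the Dirichlet energy through the very forces $F_\alpha\cdot U_\alpha$ you are trying to control — either invoke the variational characterization of the solution together with a divergence-free extension of the boundary data of controlled energy, or use the resistance-matrix bound as the paper does; and the constants in \eqref{decay_rate_R}--\eqref{decay_rate} unavoidably depend on $|\beta|$ as well as $M_1$ (as they implicitly do in the paper's statement).
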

\begin{proof}
We consider the case where $x_+=0$ and $R=1$, the generalization to arbitrary $x_+$ and $R$ can be obtained by scaling arguments. In what follows we use the short cut $(u,p):=(U[U_1,U_2],P[U_1,U_2])$ and extend $u$ by $U_\alpha$ on $B(x_\alpha,1)$, $\alpha=1,2$ and we have $u\in\dot{H}^1(\mathbb{R}^3)$. We consider a regular truncation function $\chi=0$ on $B(0,2M_1) \supset B(x_1,1)\cup B(x_2,1)$ and $\chi=1$ on ${}^cB(0,3M_1)$ and we set
\begin{eqnarray*}
\bar{u}&:=&u \chi -\mathfrak{B}_{2M_1,3M_1}[u\nabla \chi],\\
\bar{p}&:=&p\chi,
\end{eqnarray*}
where $\mathfrak{B}_{2M_1,3M_2}$ is the Bogovskii operator on the annulus $B(0,3M_1)\setminus B(0,2M_1)$, see \cite[Appendix A. Lemma 18]{Hillairet} for instance, and satisfies
$$
\div\left(\mathfrak{B}_{2M_1,3M_1}[u\nabla \chi] \right)=u\nabla \chi, $$
$$
 \left \|\mathfrak{B}_{2M_1,3M_1}[u\nabla \chi] \right \|_{L^2(B(0,3M_1)\setminus B(0,2M_1))}\leq C(M_1)\|u\nabla \chi \|_{L^2(B(0,3M_1)\setminus B(0,2M_1))}.
$$
Using Stokes regularity results, see \cite[Theorem IV.4.1]{Galdi}, combined with some Sobolev embeddings we have $\bar{u}\in \mathcal{C}^\infty(\mathbb{R}^3)$ and satisfies a Stokes equation on $\mathbb{R}^3$ with a source term $f=-\div \Sigma(\bar{u},\bar{p})$ having support in  $B(0,3M_1)\setminus B(0,2M_1))$. Hence we can apply the convolution formula with the Green function $\Phi$ and write
$$
\bar{u}(x)=\int_{\mathbb{R}^3}\Phi(x-y)f(y) dy.
$$
Note that $u=\bar{u}$ on ${}^cB(0,3M_1)$. We may then apply a Taylor expansion of $\Phi(\cdot-y)$ for $|x|>3M_1$ and get 
$$
u(x)=\bar{u}(x)=\Phi(x) \int_{\mathbb{R}^3}f(y) dy - \int_{\mathbb{R}^3}\int_0^1(1-t) [\nabla \Phi(x-ty)y]f(y) dy dt.
$$
An integration by parts for the first term yields 
\begin{align*}
\int_{\mathbb{R}^3} f(y) dy &= \int_{B(0,3M_1)\setminus B(0,2M_1))} \div(\Sigma(\bar{u},\bar{p})), \\
&=- \int_{\partial B(0,3M_1)} \Sigma(u,p) n d \sigma,\\
&= -\int_{\partial B(x_1,1)} \Sigma(u,p) n d \sigma+ \int_{\partial B(x_2,1)} \Sigma(u,p) nd \sigma,\\
&=-F_1-F_2,
\end{align*}
we recall that in the above computations the unit normal vector $n$ is pointing outward. It remains to estimate the error term, we recall that using the Bogovskii properties and the embedding $\dot{H}^1(\mathbb{R}^3) \subset L^2_{\text{loc}}(\mathbb{R}^3)$ we have
\begin{multline}
\|f\|_{\dot{H}^{-1}(B(0,3M_1)\setminus B(0,2M_1))}=\|\bar{u}\|_{\dot{H}^1(B(0,3M_1)\setminus B(0,2M_1))}\\ \leq C(M_1) (\| u\|_{\dot{H}^1(\mathbb{R}^3)}+\|u\|_{L^2(B(0,3M_1)\setminus B(0,2M_1))})\leq C(M_1)\| u\|_{\dot{H}^1(\mathbb{R}^3)},
\end{multline}
on the other hand, an integration by parts together with \eqref{resistance} yields
$$
\|\nabla u\|^2_{L^2(\mathbb{R}^3 \setminus (B(x_1,1)\cup B(x_2,2))}=- F_1\cdot U_1-F2\cdot U_2 \leq (\left|A_1(\xi) \right|+\left|A_2(\xi) \right|)^2 (|U_1|+|U_2|)^2.
$$
For the remaining term we introduce $G(x,y)$
$$
G(x,y):=\psi(y)  \int_0^1 (1-t)[\nabla \Phi(x-ty) y] dt,
$$
where $\psi=0$ on ${}^cB(0,7/2 M_1)$ and $\psi=1$ on $B(0,3M_1)$. With this construction and since $\supp f \in B(0,3M_1)\setminus B(0,2M_1)$ we have
$$
\int_{\mathbb{R}^3}\int_0^1(1-t) [\nabla \Phi(x-ty)y]f(y) dy dt = \int_{\mathbb{R}^3} f(y) G(x,y) dy.
$$
Moreover, we have for all $t\in[0,1]$, $|x|>4M_1>7/2 M_1>|y|>2M_1$ 
$$
|x-ty|\geq|x|-t|y|\geq |x|-|y|\geq \frac{1}{8}|x|,
$$
this yields using the decay property of the Oseen tensor for all $|x|>4M_1$ and $y\in B(0,7/2 M_1)\setminus B(0,2M_1)$ 
$$\left\|G(\cdot,x) \right\|_{W^{1,\infty}} \leq \frac{C(M_1)}{|x|^2}. $$
Hence
\begin{align*}
\left|\int_{\mathbb{R}^3} f(y) G(x,y) dy \right|& \leq  \|f\|_{\dot{H}^{-1}(\mathbb{R}^3)}\|G(\cdot,x)\|_{H^1_0(B(0,7/2 M_1)\setminus B(0,2M_1))}\\
&\leq C(M_1) \frac{(\left|A_1(\xi) +A_2(\xi) \right|) (|U_1|+|U_2|)}{|x|^2},
\end{align*}
we conclude by using the fact that $|\xi|\leq M_1$ and the uniform bounds on $A_1, A_2$, see Remark \ref{reg_A_1A_2}.
\end{proof}

 \section{The method of reflections}
In this section, we aim to show that the method of reflections holds true in the special case where the minimal distance and the radius $R$ are of the same order. The idea is to approach the velocity field $u^N$ by the particular solutions developed in the section above. We recall that $u^N$ is the unique solution to the following Stokes problem :
\begin{equation*}
\left \{
\begin{array}{rcl}
-\Delta u^N + \nabla p^N & = &0, \\ 
\div u^N & = & 0,
\end{array}
\text {on $\mathbb{R}^3 \setminus \underset{i=1}{\overset{N}{\bigcup}} \overline{B}^i$, } 
\right.
\end{equation*}
completed with the no-slip boundary conditions :
\begin{equation*}
\left \{
\begin{array}{rcl}
u^N & = &  U_1^i\,, \text{ on $ \partial B(x_1^i,R)$}, \\ 
u^N & = &  U_2^i\,, \text{ on $ \partial B(x_2^i,R)$}, \\
\underset{|x| \to \infty}{\lim} |u^N(x)| & = & 0,
\end{array}
\right.
\end{equation*}
where $(U_1^i,U_2^i) \in \mathbb{R}^3 \times \mathbb{R}^3  \,,\, 1\leq i \leq N$ are such that:
\begin{equation*}
\begin{pmatrix}
F_1^i \\ \\
F_2^i
\end{pmatrix} =- \begin{pmatrix}
mg \\ \\
mg
\end{pmatrix},\:\: \forall \, 1 \leq i \leq N.
\end{equation*}
Thanks to the superposition principle, the sum of the $N$ solutions $\sum_{i=1}^N U[U_1^i, U_2^i]$ satisfies a Stokes equation on $\mathbb{R}^3 \setminus \underset{i=1}{\overset{N}{\bigcup}} \overline{B}^i$, but does not match the boundary conditions. Hence, we define the error term:
$$U[u_*^{(1)}] = u- \sum_{i=1}^N U[U_1^i, U_2^i],$$
which satisfies a Stokes equation on $\mathbb{R}^3 \setminus\underset{i=1}{\overset{N}{\bigcup}} \overline{B}^i$ completed with the following boundary conditions for all $1\leq i \leq N$, $\alpha=1,2$ and $x\in B(x_\alpha^i,R)$ : 
$$
u_*^{(1)}(x) = - \sum_{j\neq i } U[U_1^i, U_2^i](x).
$$
We set then for $\alpha=1,2$ and $1\leq i \leq N$:
$$
U_\alpha^{i,{(1)}}:= u_*^{(1)}(x_\alpha^i),
$$
and reproduce the same approximation to obtain:
$$U[u_*^{(2)} ]:= u- \sum_{i=1}^N \left ( U[U_1^i, U_2^i]+ U[U_1^{i,{(1)}}, U_2^{i,{(1)}}] \right),$$
which satisfies a Stokes equation with the following boundary conditions for all $1\leq i \leq N$, $\alpha=1,2$ and $x\in B(x_\alpha^i,R)$: 
$$
u_*^{(2)}(x) = u_*^{(1)}(x)-u_*^{(1)}(x_\alpha^i)- \sum_{j\neq i } U[U_1^{i,{(1)}}, U_2^{i,{(1)}}](x).
$$
By iterating the process, one can show that for all $k\geq 1$ we have:
$$
u=\sum_{p=0}^{k} \sum_{i=1}^N U[U_1^{i,{(p)}}, U_2^{i,{(p)}}] + U[u_*^{(k+1)}],
$$
where for all $\alpha=1,2$, $1\leq i \leq N$ and $p\geq 0$:
\begin{eqnarray}\label{reflection}
u_*^{(p+1)}(x) & = & u_*^{(p)}(x)-u_*^{(p)}(x_\alpha^i)- \sum_{j\neq i } U[U_1^{i,{(p)}}, U_2^{i,{(p)}}](x)\,, \nonumber \\
u_*^{(0)} & = &  \sum_{i=1}^N U_1^i \, 1_{ B(x_1^i,R)} + U_2^i \,1_{ B(x_2^i,R)}\,, \nonumber\\
U_{\alpha}^{i,{(p)}}& = & u_*^{(p)}(x_\alpha^i)\,, \nonumber \\
U_\alpha^{i,(0)}&=& U_\alpha^i \,.
\end{eqnarray}
The convergence is analogous to the convergence proof in \cite[Section 3.1]{Mecherbet}. We begin by the following estimates that are needed in the computations.
\begin{lmm}\label{maj1}
Under assumptions \eqref{hyp_bound}, \eqref{d_min+} we have for all  $1 \leq  i \neq j \leq N$, $ 1\leq  \beta \leq 2$:
\begin{equation}
|x_+^i -x_\beta ^j|\geq \frac{1}{2}|x_+^i -x_+^j|.
\end{equation}
\end{lmm}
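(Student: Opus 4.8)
The plan is to reduce the claimed inequality to the elementary fact that the particle radius $R$ is much smaller than the minimal distance $d_{\min}$, exploiting the uniform bound \eqref{hyp_bound} to control how far each individual center $x_\beta^j$ sits from the midpoint $x_+^j$.

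First I would note that, for $\beta\in\{1,2\}$, one has $x_\beta^j = x_+^j + (-1)^{\beta+1}x_-^j$, so that
\[
|x_\beta^j - x_+^j| = |x_-^j| = R\,|\xi_j| \le R\,M_1
\]
by \eqref{hyp_bound}. The triangle inequality then gives, for any $i\neq j$,
\[
|x_+^i - x_\beta^j| \;\ge\; |x_+^i - x_+^j| - |x_+^j - x_\beta^j| \;\ge\; d_{ij} - R\,M_1 .
\]
Hence the lemma follows once I show that $R\,M_1 \le \tfrac12 d_{\min}$: combined with $d_{\min}\le d_{ij}$ this yields $|x_+^i - x_\beta^j| \ge d_{ij} - \tfrac12 d_{\min} \ge \tfrac12 d_{ij} = \tfrac12 |x_+^i - x_+^j|$.

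It remains to establish $R \ll d_{\min}$, which is exactly the observation recorded in Remark \ref{rem1}. Concretely, the lower bound \eqref{Wasserstein_bound} on the infinite Wasserstein distance together with assumption \eqref{d_min+} gives
\[
d_{\min}^2(t) \;\ge\; \frac{W_\infty^3(t)}{\mathcal{E}_1} \;\gtrsim\; \frac{1}{N\,\|\rho\|_{L^\infty(0,T;L^\infty(\mathbb{R}^3))}},
\]
so that $d_{\min}(t) \gtrsim N^{-1/2}$ uniformly in $t\in[0,T]$; since $R = \tfrac{r_0}{2N}$, this forces $R/d_{\min} \lesssim r_0\,N^{-1/2}\to 0$. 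Therefore, for $N$ large enough — with a threshold depending only on $r_0$, $M_1$, $\mathcal{E}_1$ and $\|\rho\|_{L^\infty(0,T;L^\infty(\mathbb{R}^3))}$, hence independent of $t$ and of the pair $(i,j)$ — we have $R\,M_1 \le \tfrac12 d_{\min}(t)$ for all $t\in[0,T]$, which concludes the argument.

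The only mildly delicate point is the uniformity of the threshold on $N$ with respect to $t$ and to the pair $(i,j)$; this causes no trouble because the estimate $d_{\min}(t)\gtrsim N^{-1/2}$ extracted from Remark \ref{rem1} is uniform on $[0,T]$, and the reduction in the first two paragraphs is entirely pointwise in $t$ and uniform over pairs.
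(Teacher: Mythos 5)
Your argument is correct and is exactly the reasoning the paper leaves implicit (Lemma \ref{maj1} is stated without proof): write $x_\beta^j = x_+^j \pm x_-^j$, use \eqref{hyp_bound} to get $|x_+^j - x_\beta^j| = R|\xi_j| \le R M_1$, and invoke $R \ll d_{\min}$, which is precisely the content of Remark \ref{rem1} under \eqref{d_min+} and \eqref{Wasserstein_bound}. The only caveat is that your proof (like the paper's implicit one) yields the inequality only for $N$ large enough, with a threshold uniform in $t$ and in the pair $(i,j)$; this is consistent with how the lemma is used, since all downstream results are themselves stated for $N$ large enough, even though the lemma's statement omits the qualifier.
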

The first step is to show that the sequence  $\underset{i}{\max}(\max( |U_1^{i,(p)}|, |U_2^{i,(p)}|))$ converges when $p$ goes to infinity.
\begin{lmm}\label{conv_serie}
Under assumptions \eqref{hyp_bound}, \eqref{conv_mesure},  \eqref{d_min+} and the assumption that $r_0\|\rho\|_{L^\infty(0,T; L^\infty(\mathbb{R}^3))}^{1/3}$ is small enough, there exists a positive constant $K<1/2$ satisfying for all $1\leq i \leq N$, $p\geq 0$ 
$$
\underset{i}{\max} (\max( |U_1^{i,(p+1)}|, |U_2^{i,(p+1)}|)) \leq K \underset{i}{\max} (\max( |U_1^{i,(p)}|, |U_2^{i,(p)}|)),
$$
for $N$ large enough.
\end{lmm}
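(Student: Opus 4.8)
The plan is to turn the recursion \eqref{reflection} into a closed formula for $U_\alpha^{i,(p+1)}$ and then control it by the decay estimate \eqref{decay_rate} of the Proposition together with a discrete convolution bound. Evaluating the last line of \eqref{reflection} at the center $x=x_\alpha^i$, which lies in the ball $B(x_\alpha^i,R)$ on which that defining formula for $u_*^{(p+1)}$ is used, the first two terms cancel and one is left with
\begin{equation*}
U_\alpha^{i,(p+1)}=u_*^{(p+1)}(x_\alpha^i)=-\sum_{j\neq i}U[U_1^{j,(p)},U_2^{j,(p)}](x_\alpha^i),\qquad \alpha=1,2,\ 1\leq i\leq N .
\end{equation*}
So it suffices to bound, for fixed $i$, the velocity field generated by the other $N-1$ pairs at the point $x_\alpha^i$.

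Then I would check that the evaluation points lie in the range where \eqref{decay_rate} is valid. By Lemma \ref{maj1} (applied with the roles of $i$ and $j$ exchanged), $|x_\alpha^i-x_+^j|\geq \tfrac12|x_+^i-x_+^j|=\tfrac12 d_{ij}\geq \tfrac12 d_{\min}$; and since $R\ll d_{\min}$ under \eqref{d_min+} (Remark \ref{rem1}), for $N$ large one has $\tfrac12 d_{\min}>4M_1R$, so \eqref{decay_rate} with $\beta=0$ applies to each $U[U_1^{j,(p)},U_2^{j,(p)}]$ at $x_\alpha^i$ with the uniform constant $C(M_1)$. Writing $E^{(p)}:=\underset{i}{\max}(\max(|U_1^{i,(p)}|,|U_2^{i,(p)}|))$, this gives $\bigl|U[U_1^{j,(p)},U_2^{j,(p)}](x_\alpha^i)\bigr|\leq 4C(M_1)R\,E^{(p)}/d_{ij}$, hence, summing over $j\neq i$ and using $R=\tfrac{r_0}{2N}$,
\begin{equation*}
\bigl|U_\alpha^{i,(p+1)}\bigr|\leq 4C(M_1)R\,E^{(p)}\sum_{j\neq i}\frac{1}{d_{ij}}=2C(M_1)\,r_0\,E^{(p)}\cdot\frac1N\sum_{j\neq i}\frac{1}{d_{ij}} .
\end{equation*}

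It remains to estimate $\frac1N\sum_{j\neq i}\frac1{d_{ij}}$ uniformly in $i$ and $t$, and this is the genuinely substantial point: I would invoke the estimate of Appendix A, the discrete counterpart of the elementary bound $\int_{\mathbb{R}^3}\rho(y)|x-y|^{-1}dy\lesssim\|\rho\|_\infty^{1/3}$. It is proved by splitting the indices $j$ into those with $d_{ij}\leq 2W_\infty$ — handled through the disjointness of the balls $B(x_+^j,d_{\min}/2)$, a dyadic ordering of the distances, and the lower bound \eqref{Wasserstein_bound} on $NW_\infty^3$ — and those with $d_{ij}>2W_\infty$ — compared to the continuous integral via a transport map realising $W_\infty$ — exactly as in \cite[Section 3.1]{Mecherbet} and \cite{Hauray}; the outcome is $\frac1N\sum_{j\neq i}\frac1{d_{ij}}\leq C\|\rho\|_{L^\infty(0,T;L^\infty(\mathbb{R}^3))}^{1/3}+\eta_N$ with $C$ a numerical constant and $\eta_N\underset{N\to\infty}{\to}0$ (the remainder being a fixed multiple of a positive power of $\underset{t\in[0,T]}{\sup}d_{\min}(t)$, which vanishes by \eqref{d_min0}). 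Plugging back, $E^{(p+1)}\leq \bigl(C' r_0\|\rho\|_{L^\infty(0,T;L^\infty(\mathbb{R}^3))}^{1/3}+\delta_N\bigr)E^{(p)}$ with $\delta_N\underset{N\to\infty}{\to}0$; taking $r_0\|\rho\|_{L^\infty(0,T;L^\infty(\mathbb{R}^3))}^{1/3}$ small enough that $C'r_0\|\rho\|_{L^\infty(0,T;L^\infty(\mathbb{R}^3))}^{1/3}<\tfrac14$, and then $N$ large enough that $\delta_N<\tfrac14$, yields the claimed estimate with $K:=C'r_0\|\rho\|_{L^\infty(0,T;L^\infty(\mathbb{R}^3))}^{1/3}+\delta_N<\tfrac12$, a constant independent of $i$ and $p$. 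The only delicate points inside the lemma itself are the collapse of the recursion at the particle centers and the verification that $|x_\alpha^i-x_+^j|>4M_1R$, both immediate once one knows $R\ll d_{\min}$; the real work is concentrated in the uniform convolution bound of Appendix A, where the $N^{-1/2}$-scale lower bound on $d_{\min}$ forced by \eqref{d_min+} must be balanced against the $L^\infty$ bound on $\rho$.
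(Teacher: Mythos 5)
Your proof is correct and follows essentially the same route as the paper's: collapse the reflection recursion at the particle centers, bound each $U[U_1^{j,(p)},U_2^{j,(p)}](x_\alpha^i)$ by the decay estimate \eqref{decay_rate} together with Lemma \ref{maj1}, control $\frac1N\sum_{j\neq i}d_{ij}^{-1}$ by the Appendix A estimate (Lemma \ref{JO} with $k=1$), absorb the $\|\rho\|_\infty W_\infty^3/d_{\min}$ contribution using \eqref{d_min+} and \eqref{d_min0}, and choose $r_0\|\rho\|_{L^\infty(0,T;L^\infty(\mathbb{R}^3))}^{1/3}$ small to get $K<1/2$. The only difference is that you spell out checks the paper leaves implicit (the cancellation at $x=x_\alpha^i$ and the validity range $|x_\alpha^i-x_+^j|>4M_1R$), which is fine.
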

\begin{proof}
According to formulas \eqref{decay_rate} and Lemma \ref{maj1}, we have for all $\alpha=1,2$ and $1 \leq i \leq N$:
\begin{align*}
|U_\alpha^{i,(p+1)}| & \leq \left | \sum_{j\neq i }U[U_1^{j,{(p)}}, U_2^{j,{(p)}}](x_\alpha^i) \right |, \\
& \lesssim \frac{C r_0}{N} \left (\sum_{j\neq i }  \frac{1}{d_{ij}}\right ) \underset{j}{\max}\,(|U_1^{j,(p)}|, |U_2^{j,(p)}|),\\
&\leq C r_0\left (\|\rho\|_{L^\infty(0,T; L^\infty(\mathbb{R}^3))} \frac{W_\infty^3}{d_{\min}}+\|\rho\|_{L^\infty(0,T; L^\infty(\mathbb{R}^3))}^{1/3} \right)\,,
\end{align*}
where we used Lemma \ref{JO} for $k=1$. Hence, the first term in the right-hand side vanishes according to \eqref{d_min+} and \eqref{d_min0}.
Finally, if we assume that $r_0\|\rho\|_{L^\infty(0,T; L^\infty(\mathbb{R}^3))}^{1/3} $ is small enough, we obtain the existence of a positive constant $K<1/2$ such that:
$$
\underset{i}{\max} (\max( |U_1^{i,(p+1)}|, |U_2^{i,(p+1)}|)) \leq K 
\underset{i}{\max} (\max( |U_1^{i,(p)}|, |U_2^{i,(p)}|)).
$$
\end{proof}
We have the following result.
\begin{prpstn}\label{convergence}
Under the same assumptions as Lemma \ref{conv_serie}, we have for $N$ large enough:
$$
\underset{k\to \infty}{\lim} \|\nabla  U[u_*^{(k+1)}] \|_2 \lesssim R\, \underset{\alpha=1,2}{\underset{1\leq i \leq N}{\max}} |U_\alpha^{i}|.
$$
\end{prpstn}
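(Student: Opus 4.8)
The plan is to estimate the remainder by the classical energy (minimum–dissipation) principle. Write $v_k:=U[u_*^{(k+1)}]$ and let $q_k$ be its pressure, so that $(v_k,q_k)$ solves a Stokes system on $\Omega_N:=\mathbb{R}^3\setminus\bigcup_i\overline{B}^i$ with $v_k=u_*^{(k+1)}$ on every $\partial B_\alpha^i$ and $v_k\to 0$ at infinity. Testing the weak formulation against $v_k-w$, where $w\in\dot{H}^1(\Omega_N)$ is any divergence-free field agreeing with $u_*^{(k+1)}$ on every $\partial B_\alpha^i$, kills both the pressure term and the boundary term, so $\|\nabla v_k\|_2\leq\|\nabla w\|_2$. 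The whole point is therefore to produce, for $N$ large, a divergence-free competitor $w$ with $\|\nabla w\|_2\lesssim R\,M$, where $M:=\max_{1\leq i\leq N,\ \alpha=1,2}|U_\alpha^i|$.

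To build $w$ I would first unfold the recursion \eqref{reflection}: for $x\in B(x_\alpha^i,R)$ one finds
\[
u_*^{(k+1)}(x)\;=\;-S_i^{(k)}(x)\;-\;\sum_{q=0}^{k-1}\bigl(S_i^{(q)}(x)-S_i^{(q)}(x_\alpha^i)\bigr),\qquad S_i^{(q)}:=\sum_{j\neq i}U[U_1^{j,(q)},U_2^{j,(q)}],
\]
i.e. the data on particle $i$ is a tail field of the last generation plus accumulated \emph{first–order Taylor remainders} of the earlier generations. Since \eqref{d_min+} and \eqref{d_min0} force $R\ll d_{\min}$, Lemma \ref{maj1} shows that for a fixed $c\in(1,M_2)$ (using $M_2>1$ from \eqref{hyp_bound}, together with $|x_1^i-x_2^i|=2R|\xi_i|>2cR$) and $N$ large, the ball $B(x_\alpha^i,cR)$ avoids $\bigcup_{j\neq i}\overline{B}^j$ and $\overline{B}_\beta^i$ for $\beta\neq\alpha$; hence $S_i^{(q)}$ is smooth and divergence-free there, and $u_*^{(k+1)}$ extends to a divergence-free field $g_\alpha^{i,k}$ on $B(x_\alpha^i,cR)$. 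Then I would localise: with a cutoff $\chi_\alpha^i$ equal to $1$ near $B_\alpha^i$ and supported in $B(x_\alpha^i,cR)$, set $w_\alpha^{i,k}:=\chi_\alpha^i g_\alpha^{i,k}-\mathfrak{B}[g_\alpha^{i,k}\cdot\nabla\chi_\alpha^i]$ (Bogovskii operator on the annulus) and $w:=\sum_{i,\alpha}w_\alpha^{i,k}$. This $w$ is divergence-free, equals $u_*^{(k+1)}$ on every $\partial B_\beta^j$, and its $2N$ pieces have disjoint supports, so $\|\nabla w\|_2^2=\sum_{i,\alpha}\|\nabla w_\alpha^{i,k}\|_2^2$; the scale–invariant Bogovskii estimate together with Poincaré's inequality on an $R$–scale ball give
\[
\|\nabla w_\alpha^{i,k}\|_2\ \lesssim\ R^{3/2}\,\|\nabla g_\alpha^{i,k}\|_{L^\infty(B(x_\alpha^i,cR))}\;+\;R^{1/2}\,\|g_\alpha^{i,k}\|_{L^\infty(B(x_\alpha^i,cR))}.
\]

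Next I would bound these two sup norms using the decay estimates \eqref{decay_rate} for the single–cluster solutions: on $B(x_\alpha^i,cR)$ one has $|x-x_+^j|\geq d_{ij}/2>4M_1R$ (Lemma \ref{maj1}), so $|U[U_1^{j,(q)},U_2^{j,(q)}](x)|\lesssim R\,d_{ij}^{-1}\max_\beta|U_\beta^{j,(q)}|$ and $|\nabla U[U_1^{j,(q)},U_2^{j,(q)}](x)|\lesssim R\,d_{ij}^{-2}\max_\beta|U_\beta^{j,(q)}|$. Summing over $j\neq i$ with Lemma \ref{JO} and \eqref{d_min+} (which make $\tfrac1N\sum_{j\neq i}d_{ij}^{-1}$ and $\tfrac1N\sum_{j\neq i}d_{ij}^{-2}$ both $\lesssim 1$), using $RN\lesssim 1$, and invoking the geometric decay $\max_{j,\beta}|U_\beta^{j,(q)}|\leq K^q M$ of Lemma \ref{conv_serie}, I expect $\|\nabla S_i^{(q)}\|_{L^\infty(B(x_\alpha^i,cR))}\lesssim K^q M$, hence by summation $\|\nabla g_\alpha^{i,k}\|_{L^\infty}\lesssim M$; for the value, the mean value inequality turns each difference $S_i^{(q)}(x)-S_i^{(q)}(x_\alpha^i)$ into $O\!\bigl(R\,\|\nabla S_i^{(q)}\|_\infty\bigr)$, so $\|g_\alpha^{i,k}\|_{L^\infty}\lesssim (K^k+R)M$ — this extra factor $R$ is the crucial gain. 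Plugging back and using $R=r_0/(2N)$ gives $\|\nabla w\|_2^2\lesssim NR^3M^2+NR\,K^{2k}M^2\lesssim (R^2+K^{2k})M^2$, i.e. $\|\nabla v_k\|_2\lesssim (R+K^k)M$. Finally, the exact identity $v_k=u-\sum_{p=0}^k\sum_i U[U_1^{i,(p)},U_2^{i,(p)}]$ together with $\bigl\|\nabla\sum_i U[U_1^{i,(p)},U_2^{i,(p)}]\bigr\|_2\lesssim K^p M$ (obtained exactly as in \cite[Section 3.1]{Mecherbet}, the cross terms being handled by \eqref{decay_rate}, Lemma \ref{maj1} and Lemma \ref{JO}) shows that $\nabla v_k$ converges in $L^2$, so the limit exists; letting $k\to\infty$ in $\|\nabla v_k\|_2\leq\|\nabla w\|_2$ yields $\lim_{k\to\infty}\|\nabla U[u_*^{(k+1)}]\|_2\lesssim R\,M$.

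The main obstacle is precisely the extraction of that \emph{extra power of $R$}. A crude bound $\|u_*^{(k+1)}\|_{L^\infty(\partial B_\alpha^i)}\lesssim K^k M$ together with the energy estimate would only produce $\|\nabla v_k\|_2\lesssim\sqrt{NR}\,M\sim M$, with no decay in $R$; one really has to exploit that the data $u_*^{(k+1)}$ on each particle is a genuine first–order Taylor remainder of the smooth, Stokeslet-decaying fields generated by the other clusters, hence $O(R)$ times a velocity gradient, and combine this with the favourable $L^2$–scaling ($\sim R^{1/2}$) of a Bogovskii correction supported on an $R$–scale annulus. The remaining points — disjointness and regularity of the enlarged balls $B(x_\alpha^i,cR)$, and the uniform-in-$k$ control of the lattice sums — are routine and rely on $d_{\min}\gg R$ (from \eqref{d_min+}) and on \eqref{hyp_bound}.
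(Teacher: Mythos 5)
Your proof is correct and follows essentially the same route as the paper, which simply defers to the convergence proof of \cite[Proposition 3.4]{Mecherbet}: an energy/minimal-dissipation argument with a divergence-free competitor built from cutoffs and Bogovskii corrections on $R$-scale annuli, where the decay \eqref{decay_rate} of the pair solutions and the Taylor-remainder structure of $u_*^{(k+1)}$ supply the extra factor of $R$. Indeed, the intermediate bound you derive, $\|\nabla U[u_*^{(k+1)}]\|_2^2 \lesssim R^3\sum_i\bigl(\|\nabla u_*^{(k+1)}\|_{L^\infty(B_i)}+R^{-1}\|u_*^{(k+1)}\|_{L^\infty(B_i)}\bigr)^2$, is exactly the one the paper uses in the analogous Proposition \ref{prop4}.
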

\begin{proof}
The proof is analogous to the convergence proof of \cite[Proposition 3.4]{Mecherbet}. This is due to the fact that the particular solutions have the same decay rate as the Oseen-tensor, see \eqref{decay_rate}.
\end{proof}
%Another important estimate is the following one : 
%\begin{prop}\label{c2:velocity_sequence}
%For all $1 \leq i \leq N$ we have : 
%$$
% \underset{p=1}{\overset{\infty}{\sum}}\left |U_1^{i,(p)}-U_2^{i,(p)} \right | \lesssim |x_-^i|.
%$$
%\end{prop}
%\begin{proof}
%We set $i=1$ and the same holds true for all $1\leq i \leq N$ : 
%For all $p\geq 0$ we have thanks to formula \eqref{c2:reflection} : 
%\begin{align*}
%\left |U_1^{1,(p+1)}-U_2^{1,(p+1)} \right | & = \left |\underset{j\neq1}{\sum} U[U_1^{j,(p)}, U_2^{j,(p)}](x_1^1)-U[U_1^{j,(p)}, U_2^{j,(p)}](x_2^1) \right |,\\
%& = \left | \underset{j\neq1}{\sum} (\Phi(x_1^j-x_1^1)-\Phi(x_1^j-x_2^1))F_1^j + (\Phi(x_2^j-x_1^1)-\Phi(x_2^j-x_2^1))F_2^j \right |.\\
%\end{align*}
%where 
%\begin{equation*}
%\left \{
%\begin{array}{rcl}
%F_1^j & =& 6\pi R (A_1(s^j)U_1^{j,(p)}+A_2(s^j)U_2^{j,(p)}),\\
%F_2^j & =& 6\pi R (A_2(s^j)U_1^{j,(p)}+A_1(s^j)U_2^{j,(p)}),\\
%\end{array}
%\right.
%\end{equation*}
%which can be bounded by 
%$$
%\max (|F_1^j|,|F_2^j|) \lesssim R \underset{j}{\max}\, \max(|U_1^{j,(p)}|, |U_2^{j,(p)}|) \lesssim R K^{p},
%$$
%with $K<1$. Hence, as before, we can write
%\begin{align*}
%\left |U_1^{1,(p+1)}-U_2^{1,(p+1)} \right |&\lesssim  4\, K^{p} \underset{j\neq1}{\sum} \left ( \frac{R}{ |x_+^j-x_+^1|^2} \right) |x_1^1-x_2^1|,\\
%&\lesssim  K^p |x_1^1-x_2^1|.
%\end{align*}
%which yields the expected result.
%\end{proof}
\subsection{Two particular cases}
\subsubsection{First case}\label{first_case}
Given $W \in \mathbb{R}^3$ we consider in this part $w$ the unique solution to the Stokes equation \eqref{eq_stokes} completed with the following boundary conditions : 
\begin{equation}
w=\left\{ 
\begin{array}{rl}
W& \text {on $B(x_1^1, R),$}\\
-W& \text {on $B(x_2^1, R),$}\\
0 & \text {on $ B(x_1^i, R)\cup B(x_2^i, R)$, $i\neq 1.$}
\end{array}
\right.
\end{equation}
We denote by $\mathcal{W}_\alpha^{i,(p)}$, $\alpha=1,2$, $1\leq i \leq N$, $p \in \mathbb{N}$ the velocities obtained from the method of reflections applied to the velocity field $w$. In other words : 
$$
w = \underset{p=0}{\overset{k}{\sum}} \underset{i}{\sum} U[\mathcal{W}_1^{i,(p)},\mathcal{W}_2^{i,(p)}] + U[w_*^{(k+1)}].
$$
We aim to show that, in this special case, the sequence of velocities $\mathcal{W}_\alpha^{i,(p)}$ and the error term $U[w_*^{(k)}]$ are much smaller than before. This is due to the initial vanishing boundary conditions for $i\neq 1$. Indeed we have :
\begin{prpstn}\label{prop1}
There exists two positive constants $C>0$ and $ L=L(\|\rho\|_{L^\infty(0,T; L^\infty(\mathbb{R}^3))})$ such that for $N$ large enough:
\begin{eqnarray*}
\underset{\alpha=1,2}{\max}\,|\mathcal{W}_\alpha^{i,(p+1)}| &\leq & C (2Cr_0L )^{p} \,  \, \frac{R |x_-^1|}{|x_+^1-x_+^i|^2} \,|W| \:,\: i\neq 1 \:,\: p\geq 0 ,\\
\underset{\alpha}{\max}|\mathcal{W}_\alpha^{1,(p+1)}|&\leq & C 2^{p-1}(r_0CL)^{p} |x_-^1| \frac{R}{d_{\min}}\, |W| \:,\: p\geq 1,\\
\underset{\alpha}{\max}|\mathcal{W}_\alpha^{i,(0)}|+\underset{\alpha}{\max}|\mathcal{W}_\alpha^{1,(1)}|&= & 0\:,\: i\neq 1.
\end{eqnarray*}
\end{prpstn}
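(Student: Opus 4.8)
The plan is to iterate the reflection formula \eqref{reflection} specialized to the initial data $w$, exploiting the fact that $w_*^{(0)}$ is supported only on the first cluster $B^1$. First I would establish the base case: since $w$ has the exact boundary values $\pm W$ on $B^1$ and $0$ on the other clusters, the zeroth-order velocities are $\mathcal{W}_1^{1,(0)}=W$, $\mathcal{W}_2^{1,(0)}=-W$ and $\mathcal{W}_\alpha^{i,(0)}=0$ for $i\neq 1$. For the first reflection, $w_*^{(1)}(x)=-\sum_{j\neq i}U[\mathcal{W}_1^{j,(0)},\mathcal{W}_2^{j,(0)}](x)$ on $B^i$; because only $j=1$ contributes, we get $\mathcal{W}_\alpha^{i,(1)}=-U[W,-W](x_\alpha^i)$ for $i\neq 1$, and $\mathcal{W}_\alpha^{1,(1)}=0$ since there is no $j\neq 1$ with nonzero data. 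This gives the last line of the statement. The key quantitative input is that $U[W,-W]$ is the field of a \emph{force-free} pair: $F_1+F_2=0$ here (the two prescribed velocities are opposite, hence by \eqref{resistance} with $A_{22}=A_{11}$ the total force vanishes), so \eqref{def_formule} gives $U[W,-W]=\mathcal{R}[W,-W]$, and \eqref{decay_rate_R} yields the sharp decay $|U[W,-W](x)|\lesssim R^2|W|/|x-x_+^1|^2$. Combined with the bound $|x_-^1|\leq M_1 R$ one obtains $|\mathcal{W}_\alpha^{i,(1)}|\leq C R|x_-^1||W|/|x_+^1-x_+^i|^2$, which is the $p=0$ case of the first estimate. (One should keep the factor $|x_-^1|$ explicit; this requires a slightly more careful version of \eqref{decay_rate_R} giving the prefactor $R|x_-^1|$ rather than $R^2$ — morally, $U[W,-W]$ is a dipole of strength $\sim R|x_-^1||W|$ — or alternatively one absorbs $|x_-^1|\sim R$ into the constant and tracks it separately.)

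Next I would set up the induction on $p$. Assume the stated bounds hold up to order $p$. The recursion for $\mathcal{W}_\alpha^{i,(p+1)}$ on cluster $i$ reads, from \eqref{reflection},
\[
\mathcal{W}_\alpha^{i,(p+1)}=w_*^{(p)}(x_\alpha^i)-w_*^{(p)}(x_+^i)-\sum_{j\neq i}U[\mathcal{W}_1^{j,(p)},\mathcal{W}_2^{j,(p)}](x_\alpha^i),
\]
so two types of terms must be controlled: the local Taylor-type difference $w_*^{(p)}(x_\alpha^i)-w_*^{(p)}(x_+^i)$, which since $|x_\alpha^i-x_+^i|=|x_-^i|\leq M_1 R$ is bounded by $M_1 R\,\|\nabla U[w_*^{(p)}]\|_{L^\infty(B^i)}$ and hence (by interior Stokes estimates as in the proof of Proposition~\ref{convergence}) by $\lesssim R\sum_{j\neq i}R(|x_-^j|/|x_+^i-x_+^j|^2)\max_\alpha|\mathcal{W}_\alpha^{j,(p)}|$; and the far-field sum $\sum_{j\neq i}U[\mathcal{W}_1^{j,(p)},\mathcal{W}_2^{j,(p)}](x_\alpha^i)$, which by \eqref{decay_rate} is $\lesssim \frac{C r_0}{N}\sum_{j\neq i}\frac{1}{d_{ij}}\max_\alpha|\mathcal{W}_\alpha^{j,(p)}|$. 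For the far-field sum one then plugs in the induction hypothesis and splits $j=1$ (using the $\mathcal{W}^{1,(p)}$ bound) from $j\neq 1$ (using the $\mathcal{W}^{i,(p)}$ bound), applying the discrete-sum lemma (Lemma~\ref{JO}) and \eqref{d_min+}, \eqref{Wasserstein_bound} to convert $\frac{1}{N}\sum 1/d_{ij}$ and $\frac{1}{N}\sum R|x_-^j|/(|x_+^i-x_+^j|^2\,|x_+^j-x_+^1|^2)$-type sums into constants times powers of $\|\rho\|_\infty$ and $1/d_{\min}$. The geometric growth rate $(2Cr_0L)^p$ comes out precisely because each reflection step contributes a factor $\lesssim r_0 L$ with $L=L(\|\rho\|_\infty)$, and the factor $2$ absorbs the splitting of the sum into the $j=1$ and $j\neq 1$ parts; the $2^{p-1}$ in the $\mathcal{W}^{1,(p+1)}$ estimate and the $R/d_{\min}$ (rather than the nicer $R/|x_+^1-x_+^i|^2$) reflect that the self-term on cluster $1$ at order $p$ is fed back by the worst-placed neighbor.

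The main obstacle is bookkeeping the two different spatial decay profiles simultaneously through the induction: the $i\neq 1$ velocities carry the dipole decay $|x_+^1-x_+^i|^{-2}$ while the $i=1$ self-velocity only enjoys the crude $d_{\min}^{-1}$ bound, and when these are cross-fed in the sum $\sum_{j\neq i}$ one must verify that the dipole structure is preserved for $i\neq 1$ at the next order. Concretely, for $i\neq 1$ the dominant contribution to $\mathcal{W}_\alpha^{i,(p+1)}$ should come from the $j=1$ term $U[\mathcal{W}_1^{1,(p)},\mathcal{W}_2^{1,(p)}](x_\alpha^i)$, whose decay in $|x_+^1-x_+^i|$ is supplied by \eqref{decay_rate} (just $1/|x_+^1-x_+^i|$, not squared!) — so to recover the $|x_+^1-x_+^i|^{-2}$ one must again use that $\mathcal{W}_1^{1,(p)}+\mathcal{W}_2^{1,(p)}$ has cancellation, i.e. the pair on cluster $1$ stays \emph{force-free} at every order $p$. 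Checking this last point — that $\mathcal{W}_1^{i,(p)}=-\mathcal{W}_2^{i,(p)}$, or at least that their sum is small enough — is the delicate structural fact; it follows by induction from the symmetry of the reflection operator (the data $w$ is antisymmetric under the swap $x_1^1\leftrightarrow x_2^1$, $W\leftrightarrow -W$, a symmetry preserved by \eqref{reflection}), and once it is in place the remaining estimates are routine applications of \eqref{decay_rate}, \eqref{decay_rate_R}, Lemma~\ref{maj1}, Lemma~\ref{JO} and the Wasserstein bounds, exactly in the style of \cite[Section 3]{Mecherbet}.
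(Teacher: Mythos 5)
Your base case is right and matches the paper (the exact cancellation $F_2^1=-F_1^1$ coming from \eqref{resistance}, so that only $\mathcal{R}[W,-W]$ survives in \eqref{def_formule} and \eqref{decay_rate_R} gives the $R^2/d_{i1}^2\lesssim R|x_-^1|/d_{i1}^2$ bound, with $\mathcal{W}_\alpha^{i,(0)}=0$ for $i\neq1$ and $\mathcal{W}_\alpha^{1,(1)}=0$). But note a misreading of the recursion: since $U_\alpha^{i,(p+1)}=u_*^{(p+1)}(x_\alpha^i)$ and \eqref{reflection} subtracts $u_*^{(p)}$ evaluated at the \emph{same} point $x_\alpha^i$, the point values satisfy simply $\mathcal{W}_\alpha^{i,(p+1)}=-\sum_{j\neq i}U[\mathcal{W}_1^{j,(p)},\mathcal{W}_2^{j,(p)}](x_\alpha^i)$; the Taylor-type difference $w_*^{(p)}(x_\alpha^i)-w_*^{(p)}(x_+^i)$ you introduce does not appear in this proposition (such differences only enter the $L^\infty$ bounds of $w_*^{(k)}$ on the balls, i.e.\ Proposition \ref{prop3}), so the interior gradient estimates you invoke are unnecessary here.

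The genuine gap is in your mechanism for propagating the $|x_+^1-x_+^i|^{-2}$ decay through the induction. You claim the cluster-$1$ pair stays force-free at every order, i.e.\ $\mathcal{W}_1^{1,(p)}=-\mathcal{W}_2^{1,(p)}$, by an antisymmetry of the reflection operator; this is false for $p\geq 2$. Indeed $\mathcal{W}_\alpha^{1,(p+1)}=-\sum_{j\neq1}U[\mathcal{W}_1^{j,(p)},\mathcal{W}_2^{j,(p)}](x_\alpha^1)$ evaluates smooth fields generated by the \emph{other} clusters at the two nearby points $x_1^1,x_2^1$, so the two values are nearly equal, not opposite; and the swap $x_1^1\leftrightarrow x_2^1$, $W\leftrightarrow -W$ is not a symmetry of the full problem because the positions of the remaining clusters break it. Fortunately no such structural fact is needed, and the paper does without it: the cancellation is used only at $p=0$, while at later orders the $j=1$ contribution is bounded crudely via \eqref{decay_rate} by $\frac{R}{d_{i1}}\max_\alpha|\mathcal{W}_\alpha^{1,(p)}|\lesssim \frac{R}{d_{i1}}\,\frac{R|x_-^1|}{d_{\min}}\,2^{p-2}(Cr_0L)^{p-1}|W|$, and then the elementary observation $\frac{R\,d_{1i}}{d_{\min}}\ll r_0L$ upgrades $\frac{1}{d_{i1}}\frac{1}{d_{\min}}$ to the required $\frac{1}{d_{i1}^2}$ profile while staying inside the geometric factor $(Cr_0L)^p2^{p-2}$; the $j\neq 1,i$ terms are handled with $\frac{1}{d_{ij}d_{1j}}\leq\frac{1}{d_{i1}}\left(\frac{1}{d_{ij}}+\frac{1}{d_{1j}}\right)$, Lemma \ref{JO} and \eqref{d_min+}, which is what produces the constant $L$ in \eqref{defL} and the bookkeeping $2^{p-1}+2^{p-2}\leq 2^p$. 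So you should delete the symmetry argument and instead exploit the $R/d_{\min}$ smallness already contained in your own induction hypothesis for $\mathcal{W}^{1,(p)}$; with that replacement the rest of your scheme goes through as in the paper.
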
 
\begin{proof}
We show that the statement holds true for $p=0$ then we prove it for all $p\geq 1$ by induction.
%Firstly remark that for all $i\neq j$ and $\alpha, \beta=1,2$ we have that: $$|x_\alpha^i-x_\beta^j|\geq |x_+^i-x_+^j|-|x^i_+-x_\alpha^i| - |x_+^j-x_\beta^j|, $$  
%as $|x_+^i-x_\alpha^i| = |x_-^i| = O(R)$ and $|x_+^i-x_+^j|=O(1/N^{1/3})$ this yields:
%$$
%|x_\alpha^i-x_\beta^j|\geq \frac{1}{2} |x_+^i-x_+^j|.
%$$
According to formula \eqref{reflection} we have for $p=0$:
$$
\mathcal{W}_\alpha^{1,(0)}= W \delta_{\alpha 1}-W \delta_{\alpha 2},
$$
and for $i\neq 1 $, $\alpha=1,2$, $ U_\alpha^{i,(0)} = 0$. Using \eqref{def_formule}, this yields for $i \neq 1$, $\alpha=1,2$:
\begin{align*}
\mathcal{W}_\alpha^{i,(1)}& = U[\mathcal{W}_1^{1,(0)},\mathcal{W}_2^{1,(0)}](x_\alpha^i),\\
& = -\Phi(x_+^1-x_\alpha^i)(F_1^1+ F_2^1)+ \mathcal{R}[W,-W](x_\alpha^i) ,
\end{align*} 
where: 
\begin{eqnarray*}
F_1^1  =-  6\pi R (A_1(s^1)-A_2(s^1)) W,&
F_2^1 = - 6\pi R (A_2(s^1)-A_1(s^1)) W.
\end{eqnarray*}
Hence, $F_2^1=-F_1^1$ we have then using Lemma \ref{maj1} and the decay rate \eqref{decay_rate_R} for $\mathcal{R}[W,-W]$
\begin{align*}
|\mathcal{W}_\alpha^{i,(1)}|& \lesssim  R^2 \frac{|W|}{d_{i1}^2} \lesssim  R |x_-^1| \frac{|W|}{d_{i1}^2},
\end{align*} 
where we used the fact that the radius $R$ is comparable to $|x_-^1|$ thanks to \eqref{hyp_bound}. Thus, we denote by $C>0$ the maximum between the global constant appearing in \eqref{decay_rate} and the one in the above estimate.\\
This shows that the first statement holds true for $p=0$. For the second estimate we have $|\mathcal{W}_\alpha^{1,(1)}|=0$ and for $p=1$ we have using the decay rate \eqref{decay_rate}
\begin{align*}
|\mathcal{W}_\alpha^{1,(2)}|&= \left | \underset{j\neq 1}{\sum} U[\mathcal{W}_1^{j,(1)},\mathcal{W}_2^{j,(1)}](x_\alpha^1)\right |, \\
&\leq C \underset{j\neq 1}{\sum} \frac{R}{d_{1j}} \max (|\mathcal{W}_1^{j,(1)}|,|\mathcal{W}_2^{j,(1)}|),\\
& \leq C \underset{j\neq 1}{\sum} \left ( \frac{CR^2 |x_-^1|}{d_{1j}^3}   \right ) |W|,\\
&\leq C \frac{|x_-^1|R}{d_{\min}} C r_0 (\mathcal{E}_1\|\rho\|_{L^\infty(0,T; L^\infty(\mathbb{R}^3))}+\|\rho\|_{L^\infty(0,T; L^\infty(\mathbb{R}^3))}^{2/3}  ) \, |W|\,,
\end{align*}
where we used Lemma \ref{JO} for $k=2$ and assumption \eqref{d_min+}. We define then the constant $L>0$ as the constant satisfying:
\begin{multline}\label{defL}
\underset{i}{\max} \, \left (\frac{1}{N} \underset{j\neq i}{\sum} \left (  \frac{1}{d_{ij}^2} \right ) +  \frac{1}{N} \underset{j\neq 1,i}{\sum} \left ( \frac{1}{d_{ij}} + \frac{1}{d_{1j}}\right ) \right )\\ \lesssim \mathcal{E}_1  \|\rho\|_{L^\infty(0,T; L^\infty(\mathbb{R}^3))}+ \|\rho\|_{L^\infty(0,T; L^\infty(\mathbb{R}^3))}^{1/3}+ \|\rho\|_{L^\infty(0,T; L^\infty(\mathbb{R}^3))}^{2/3} := L.
\end{multline}
Now for all $p\geq 1$, $i\neq1$ we have using again \eqref{decay_rate}
\begin{align*}
|\mathcal{W}_\alpha^{i,(p+1)}|&= \left | \underset{j\neq i}{\sum} U[\mathcal{W}_1^{j,(p)},\mathcal{W}_2^{j,(p)}](x_\alpha^i)\right |, \\
& \leq  C \underset{j\neq i}{\sum} \frac{R}{d_{ij}} \max (|\mathcal{W}_1^{j,(p)}|,|\mathcal{W}_2^{j,(p)}|),\\
& \leq C \Big(\underset{j\neq i,1 }{\sum}  \frac{R}{d_{ij}} C (2Cr_0L )^{p-1} \,  \, \frac{R |x_-^1|}{d_{1j}^2}\\
& +\frac{R }{d_{i1}} \frac{R|x_-^1|}{d_{\min}}  C 2^{p-2}(r_0CL)^{p-1} \Big ) |W|,  
\end{align*}
using the fact that $ \displaystyle{\frac{1}{d_{ij} d_{kj}} \leq \frac{1}{d_{ik}} \left (\frac{1}{d_{ij}}+\frac{1}{d_{kj}} \right)}$ we obtain
%& \leq C \Big( \frac{R |x_-^1|}{d_{i1}} C (2Cr_0L\|\rho_0\|_{L^\infty \cap L^1})^{p-1} \underset{j\neq i,1 }{\sum} \frac{1}{d_{1j}} \left (\frac{R}{d_{ij}}+ \frac{R}{d_{1j}} \right)\\
%& + \frac{R |x_-^1|}{N^{2/3} \,d_{i1}}  C 2^{p-2}(r_0CL\|\rho_0\|_{L^\infty \cap L^1})^{p-1}\Big)|W|\\
\begin{align*}
{|\mathcal{W}_\alpha^{i,(p+1)}|}& \leq C \Big ( \frac{R |x_-^1|}{d_{i1}} C (2Cr_0L)^{p-1} \left ( \frac{1}{d_{1i}}\underset{j\neq i,1 }{\sum} \left ( \frac{R}{d_{ij}}+ \frac{R}{d_{1j}} \right ) +  \underset{j\neq i,1 }{\sum} \frac{R}{d_{1j}^2} \right) \\
&+ \frac{R }{d_{i1}} \frac{R|x_-^1|}{d_{\min}}  C 2^{p-2}(r_0CL)^{p-1} \Big)|W|,\\
& \leq C \frac{R |x_-^1|}{d_{i1}} \Big (  C (2Cr_0L\|)^{p-1} \left (\frac{r_0L }{d_{1i}} \right)\\
& + \frac{R }{d_{\min}}  C 2^{p-2}(r_0CL)^{p-1} \Big)|W|,\\
&\leq C  \frac{R |x_-^1|}{d_{i1}^2} \left ( (Cr_0L)^{p} 2^{p-1} + \frac{R d_{i1}}{d_{\min}} C 2^{p-2}(r_0CL)^{p-1}\right) |W|.
%&\leq C \frac{R|x_-^1|}{d_{i1}^2} \left ((Cr_0L)^{p}2^{p-1} + C (Cr_0L)^{p-1} 2^{p-2} \frac{d_{1i}}{N^{2/3}} \right ) |W|.
\end{align*}
Since $\frac{R d_{1i}}{d_{\min}} \ll r_0L $, the second term can be bounded by $(Cr_0L)^{p}\, 2^{p-2} $ which yields the expected result  because $2^{p-1}+2^{p-2} \leq 2^{p}$.
We prove now the second estimate. Let $p\geq 1$, using the decay rate \eqref{decay_rate} : 
\begin{align*}
|\mathcal{W}_\alpha^{1,(p+1)}|&=\left | \underset{j\neq 1}{\sum} U[\mathcal{W}_1^{j,(p)},\mathcal{W}_2^{j,(p)}](x_\alpha^1)\right | ,\\
& \leq  C \underset{j\neq 1}{\sum} \frac{R}{d_{j1}} \max (|\mathcal{W}_1^{j,(p)}|,|\mathcal{W}_2^{j,(p)}|),\\
&\leq C \left(\underset{j\neq 1 }{\sum}  \frac{R}{d_{1j}} C (2Cr_0L \,  \, \frac{R |x_-^1|}{d_{1j}^2} \right )|W|,\\
& \leq C  (2Cr_0L)^{p-1} C  \frac{R}{d_{\min}} |x_-^1|  \left(\underset{j\neq 1 }{\sum}  \frac{R}{d_{1j}^2} \right ) |W|,\\
& \leq C  2^{p-1} (Cr_0L)^{p}\frac{R}{d_{\min}} |x_-^1| |W|.
\end{align*}
\end{proof}
According to these estimates and the definition of $L$ \eqref{defL}, if we assume that $r_0\max(\|\rho\|_{L^\infty(0,T; L^\infty(\mathbb{R}^3))},\\
\|\rho\|_{L^\infty(0,T; L^\infty(\mathbb{R}^3))}^{1/3},\|\rho\|_{L^\infty(0,T; L^\infty(\mathbb{R}^3))}^{2/3}) $ is small enough to have $2LCr_0<1$ then the following result holds true : 
\begin{crllr}\label{prop2}
Under the assumption that $r_0\max(\|\rho\|_{L^\infty(0,T; L^\infty(\mathbb{R}^3))},\|\rho\|_{L^\infty(0,T; L^\infty(\mathbb{R}^3))}^{1/3},\|\rho\|_{L^\infty(0,T; L^\infty(\mathbb{R}^3))}^{2/3})$ is small enough we have for $N$ large enough
\begin{eqnarray*}
\underset{p=0}{\overset{\infty}{\sum}} \underset{\alpha=1,2}{\max}\,|\mathcal{W}_\alpha^{i,(p)}| &\lesssim &\frac{R |x_-^1|}{|x_+^1-x_+^i|^2} \, |W| \:,\: i\neq 1, \\
\underset{p=1}{\overset{\infty}{\sum}}  \underset{\alpha=1,2}{\max}|\mathcal{W}_\alpha^{1,(p)}|&\lesssim &  \frac{R |x_-^1|}{d_{\min}}  \, |W|.
\end{eqnarray*}
\end{crllr}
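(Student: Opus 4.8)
The plan is to sum the two families of geometric bounds established in Proposition \ref{prop1}. The only preliminary point is to make the smallness hypothesis quantitative: by the definition of $L$ in \eqref{defL} one has $r_0CL \lesssim r_0\bigl(\mathcal{E}_1\|\rho\|_{L^\infty(0,T;L^\infty(\mathbb{R}^3))}+\|\rho\|_{L^\infty(0,T;L^\infty(\mathbb{R}^3))}^{1/3}+\|\rho\|_{L^\infty(0,T;L^\infty(\mathbb{R}^3))}^{2/3}\bigr)$, so choosing $r_0\max(\|\rho\|_{L^\infty(0,T;L^\infty(\mathbb{R}^3))},\|\rho\|_{L^\infty(0,T;L^\infty(\mathbb{R}^3))}^{1/3},\|\rho\|_{L^\infty(0,T;L^\infty(\mathbb{R}^3))}^{2/3})$ small enough forces $q:=2Cr_0L<1$ (one can even arrange $q<1/2$). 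This is exactly what guarantees convergence of the series $\sum_{p\ge 0}q^{p}$ used below.

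For $i\neq 1$, I would use the last line of Proposition \ref{prop1}, namely $\max_{\alpha}|\mathcal{W}_\alpha^{i,(0)}|=0$, to shift the summation index and then apply the first bound of Proposition \ref{prop1}:
\begin{equation*}
\sum_{p=0}^{\infty}\max_{\alpha=1,2}|\mathcal{W}_\alpha^{i,(p)}|=\sum_{p=0}^{\infty}\max_{\alpha=1,2}|\mathcal{W}_\alpha^{i,(p+1)}|\le C\,\frac{R|x_-^1|}{|x_+^1-x_+^i|^2}\,|W|\sum_{p=0}^{\infty}q^{p}=\frac{C}{1-q}\,\frac{R|x_-^1|}{|x_+^1-x_+^i|^2}\,|W|,
\end{equation*}
which is the first claimed estimate, the constant $C/(1-q)$ being absorbed into the symbol $\lesssim$.

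For $i=1$, I would similarly invoke $\max_{\alpha}|\mathcal{W}_\alpha^{1,(1)}|=0$ (again from Proposition \ref{prop1}) so that the sum effectively starts at $p=2$, and then apply the second bound of Proposition \ref{prop1}:
\begin{equation*}
\sum_{p=1}^{\infty}\max_{\alpha=1,2}|\mathcal{W}_\alpha^{1,(p)}|=\sum_{p=1}^{\infty}\max_{\alpha=1,2}|\mathcal{W}_\alpha^{1,(p+1)}|\le C\,\frac{R|x_-^1|}{d_{\min}}\,|W|\sum_{p=1}^{\infty}2^{p-1}(r_0CL)^{p}=\frac{C}{2}\,\frac{q}{1-q}\,\frac{R|x_-^1|}{d_{\min}}\,|W|,
\end{equation*}
which converges precisely because $q<1$ and gives the second estimate. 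Since everything reduces to summing convergent geometric series, there is no genuine obstacle here; the only care needed is the bookkeeping that turns the smallness of the displayed product of norms of $\rho$ into $q<1$ through the explicit expression \eqref{defL} for $L$, together with the two vanishing initial terms recorded in Proposition \ref{prop1}.
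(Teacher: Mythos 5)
Your proposal is correct and follows exactly the route the paper intends: the corollary is obtained by summing the geometric bounds of Proposition \ref{prop1} under the smallness condition $2Cr_0L<1$, with the vanishing terms $\mathcal{W}_\alpha^{i,(0)}$ ($i\neq 1$) and $\mathcal{W}_\alpha^{1,(1)}$ handling the index shifts, precisely as you do. Your explicit bookkeeping of the series $\sum_p q^p$ and $\frac12\sum_p q^p$ is just the spelled-out version of what the paper leaves implicit.
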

This result shows that we can obtain a better estimate for the error term of the method of reflections in this particular case: 
\begin{prpstn}\label{prop3}
We set $\eta:=2CLr_0<1 $ the constant introduced in Proposition \ref{prop1}. For all $i\neq 1 $ we have up to a constant depending on $\|\rho\|_{L^\infty(0,T; L^\infty(\mathbb{R}^3))}$
\begin{eqnarray*}
\|\nabla w_*^{(k)}\|_{L^\infty(B_i)} &\lesssim&   \frac{R|x_-^1|}{d_{i1}^3}  |W|, \\
\| w_*^{(k+1)}\|_{L^\infty(B_i)}  &\lesssim&  R  \|\nabla w_*^{(k)}\|_{L^\infty(B_i)} +  \frac{R}{d_{1i}^2}|x_-^1| \eta^{k-1} |W|.
\end{eqnarray*}
And for $i=1$ we have :
\begin{eqnarray*}
\|\nabla w_*^{(k)}\|_{L^\infty(B_1)}& \lesssim&  \frac{R}{d_{\min}} |x_-^1| \left(\frac{W_\infty^3}{d_{\min}^3}+ |\log W_\infty | \right) |W|, \\
\| w_*^{(k+1)}\|_{L^\infty(B_1)}& \lesssim&R \|\nabla w_*^{(k)}\|_{L^\infty(B_1)} +  \frac{R}{d_{\min}} |x_-^1| \eta^{k-1}|W|,
\end{eqnarray*}

\end{prpstn}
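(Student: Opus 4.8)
The plan is to unwind the reflection recursion \eqref{reflection} applied to $w$ on a fixed cluster $B_i$, so as to express $\nabla w_*^{(k)}$ and $w_*^{(k+1)}$ on $B_i$ as explicit finite superpositions of the elementary fields $U[\mathcal{W}_1^{j,(p)},\mathcal{W}_2^{j,(p)}]$ produced by the \emph{other} clusters $j\neq i$, and then to feed in the pointwise bounds of Proposition \ref{prop1} (equivalently Corollary \ref{prop2}) together with the decay estimates \eqref{def_formule}, \eqref{decay_rate_R}, \eqref{decay_rate}. Since $w_*^{(0)}$ is piecewise constant on the clusters we have $\nabla w_*^{(0)}\equiv 0$, so iterating \eqref{reflection} over $p$ on a particle $B(x_\alpha^i,R)$ gives, for $x\in B(x_\alpha^i,R)$ (writing $U[\mathcal{W}^{j,(p)}]$ for $U[\mathcal{W}_1^{j,(p)},\mathcal{W}_2^{j,(p)}]$),
\[
\nabla w_*^{(k)}(x)=-\sum_{p=0}^{k-1}\sum_{j\neq i}\nabla U[\mathcal{W}^{j,(p)}](x),
\]
\[
w_*^{(k+1)}(x)=\big(w_*^{(k)}(x)-w_*^{(k)}(x_\alpha^i)\big)-\sum_{j\neq i}U[\mathcal{W}^{j,(k)}](x).
\]

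For $i\neq 1$ the only nonzero term at level $p=0$ is $j=1$; by Proposition \ref{prop1} the total force of that level-$0$ field vanishes ($F_2^1=-F_1^1$), so \eqref{def_formule} reduces it to $U[\mathcal{W}^{1,(0)}]=\mathcal{R}[W,-W]$, and the enhanced decay \eqref{decay_rate_R}, combined with Lemma \ref{maj1} and with $R$ comparable to $|x_-^1|$ by \eqref{hyp_bound}, yields $|\nabla U[\mathcal{W}^{1,(0)}](x)|\lesssim R^2|W|/d_{i1}^3\lesssim R|x_-^1||W|/d_{i1}^3$ on $B_i$ — this is exactly the claimed leading term. It then remains to handle the tail $p\geq 1$ (which, for $i=1$, is the whole sum, since $w_*^{(1)}|_{B_1}\equiv 0$): one bounds $|\nabla U[\mathcal{W}^{j,(p)}](x)|\lesssim R\,\max_\alpha|\mathcal{W}_\alpha^{j,(p)}|/d_{ij}^2$ by \eqref{decay_rate} and Lemma \ref{maj1}, inserts the geometric-in-$p$ estimates of Proposition \ref{prop1} (isolating the indices $j=1$ and $j=i$), and sums the geometric series in $p$, which converges since $2CLr_0<1$.

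The residual sum over $j$ is then reduced to discrete convolution sums $\frac1N\sum_j d_{ij}^{-m}$ via $RN=r_0/2$, the inequality $d_{ij}^{-1}d_{1j}^{-1}\le d_{i1}^{-1}(d_{ij}^{-1}+d_{1j}^{-1})$ (used once or twice) and Lemma \ref{JO} together with \eqref{defL}; the prefactors produced this way are either of size $O(r_0L)<1$ or absorbed into the leading term. For $i=1$ the critical sum is $\frac1N\sum_{j\neq1}d_{1j}^{-4}\le d_{\min}^{-1}\,\frac1N\sum_{j\neq1}d_{1j}^{-3}$, and the case $m=3$ of Lemma \ref{JO} bounds $\frac1N\sum_{j\neq1}d_{1j}^{-3}\lesssim W_\infty^3/d_{\min}^3+|\log W_\infty|$ (the classical logarithmically divergent sum, localised through \eqref{Wasserstein_bound}); this is precisely what produces the stated factor $(W_\infty^3/d_{\min}^3+|\log W_\infty|)$ in the $i=1$ gradient estimate, the remaining prefactors staying bounded by assumption \eqref{d_min+}.

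Finally, for $w_*^{(k+1)}$ one uses the second displayed identity: the difference $w_*^{(k)}(x)-w_*^{(k)}(x_\alpha^i)$ is controlled by $2R\,\|\nabla w_*^{(k)}\|_{L^\infty(B_i)}$ since $B(x_\alpha^i,R)$ has diameter $2R$, while $\sum_{j\neq i}U[\mathcal{W}^{j,(k)}]$ is estimated exactly as above, via \eqref{decay_rate} and the level-$k$ bounds of Proposition \ref{prop1}, which carry the factor $\eta^{k-1}$ with $\eta=2CLr_0<1$; summing over $j$ (and using \eqref{d_min+} together with $d_{\min}|\log W_\infty|\to 0$ to absorb the lower-order pieces) gives the announced $\eta^{k-1}$-contributions. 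The main obstacle is the bookkeeping of the sums over $j$: separating the indices $j=1$ and $j=i$, applying the telescoping inequality in the correct order, and verifying that only the $\frac1N\sum_j d_{1j}^{-3}$ sum survives as a genuine logarithm in the $i=1$ gradient estimate while every other prefactor remains bounded or small.
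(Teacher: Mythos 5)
Your plan is correct and follows essentially the same route as the paper: unwinding the reflection recursion on $B_i$, isolating the $j=1$, $p=0$ contribution where $F_2^1=-F_1^1$ reduces the field to $\mathcal{R}[W,-W]$ with the enhanced decay \eqref{decay_rate_R}, summing the $p\geq 1$ tail via the geometric bounds of Proposition \ref{prop1}/Corollary \ref{prop2}, controlling the $j$-sums through the telescoping inequality and Lemma \ref{JO} (with $k=3$ giving the $W_\infty^3/d_{\min}^3+|\log W_\infty|$ factor for $i=1$), and bounding $w_*^{(k+1)}$ by $R\|\nabla w_*^{(k)}\|_{L^\infty(B_i)}$ plus the level-$k$ fields carrying $\eta^{k-1}$. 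No substantive deviation from the paper's argument.
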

\begin{proof}
\textbf{Estimate for $\|\nabla w_*^{(k)}\|_\infty$.}\\
Let $x \in  B(x_\alpha^i,R)$, with $\alpha=1,2$ and $i\neq 1 $, formula \eqref{reflection} yields: 
\begin{align*}
|\nabla w_*^{(k+1)}(x)| & \leq |\nabla w_*^{(k)}(x)| +  \underset{j\neq i }{{\sum}}  |\nabla  U[\mathcal{W}_1^{j,(k)},\mathcal{W}_2^{j,(k)}](x)| , \\
& \leq \underset{p=0}{\overset{k}{\sum}}\underset{j\neq i }{{\sum}}  |\nabla  U[\mathcal{W}_1^{j,(p)},\mathcal{W}_2^{j,(p)}](x)|,  \\
& \leq \underset{p=0}{\overset{k}{\sum}}\underset{j\neq i,1 }{{\sum}}  |\nabla  U[\mathcal{W}_1^{j,(p)},\mathcal{W}_2^{j,(p)}](x)| +\underset{p=1}{\overset{k}{\sum}}  |\nabla  U[\mathcal{W}_1^{1,(p)},\mathcal{W}_2^{1,(p)}](x)|\\
& +  |\nabla  U[\mathcal{W}_1^{1,(0)},\mathcal{W}_2^{1,(0)}](x)|.  \\
\end{align*}
We estimate the first term applying Corollary \ref{prop2} and the same arguments as before
\begin{align*}
\underset{p=0}{\overset{k}{\sum}}\underset{j\neq i,1 }{{\sum}}  |\nabla  U[\mathcal{W}_1^{j,(p)},\mathcal{W}_2^{j,(p)}](x)|& 
\leq C\underset{p=0}{\overset{k}{\sum}}\underset{j\neq i,1 }{{\sum}} \left ( \frac{R}{d_{ij}^2}\right) \underset{\alpha=1,2}{\max}\, |W_\alpha^{j,(p)}|,\\
&\lesssim \underset{j\neq i,1 }{{\sum}} \left ( \frac{R}{d_{ij}^2} \frac{R|x_-^1|}{d_{1j}^2} \right)|W|, \\
&\lesssim\frac{R |x_-^1|}{d_{1i}^2} \underset{j\neq i,1 }{{\sum}} \left ( \frac{R}{d_{ij}^2}+  \frac{R}{d_{1j}^2} \right)|W|, \\
&\lesssim \frac{R |x_-^1|}{d_{1i}^2} |W|. \\
\end{align*}
We reproduce the same for the second term applying Corollary \ref{prop2}: 
\begin{align*}
\underset{p=1}{\overset{k}{\sum}}  |\nabla  U[\mathcal{W}_1^{1,(p)},\mathcal{W}_2^{1,(p)}](x)| &\leq 
C\underset{p=1}{\overset{k}{\sum}} \left ( \frac{R}{|x_+^1-x_+^i|^2}\right) \max( |\mathcal{W}_1^{1,(p)}|,|\mathcal{W}_2^{1,(p)}|),\\
&\lesssim \frac{R}{|x_+^1-x_+^i|^2} \frac{R}{d_{\min}} |x_-^1|  |W|.
\end{align*}
For the last term, according to \eqref{def_formule} we have : 
\begin{align*}
 \nabla  U[\mathcal{W}_1^{1,(0)},\mathcal{W}_2^{1,(0)}](x) & = - \nabla  \Phi(x_+^1-x)(F_1^1 +F_2^1) +\nabla  \mathcal{R}[\mathcal{W}_1^{1,(0)},\mathcal{W}_2^{1,(0)}](x),
\end{align*}
as $(\mathcal{W}_1^{1,(0)},\mathcal{W}_2^{1,(0)})=(W,-W)$ we have:
\begin{equation*}
\left\{
\begin{array}{rcl}
F_1^1 &=& -6\pi R(A_1(\xi_1)W-A_2(\xi_1)W),\\
F_2^1 &=&- 6\pi R(A_2(\xi_1)W-A_1(\xi_1)W).
\end{array}
\right.
\end{equation*}
Thus $F_2^1=-F_1^1$ and we obtain using the decay rate of $R$ \eqref{decay_rate_R} together with the fact that $|x_-^1|$ is comparable to $R$ thanks to assumption \eqref{hyp_bound} :
\begin{equation*}
 \left | \nabla  U[\mathcal{W}_1^{1,(0)},\mathcal{W}_2^{1,(0)}](x) \right| \lesssim \frac{R |x_-^1|}{|x_+^1-x_+^i|^3}|W| ,\:\forall\, x\in B(x_\alpha^i,R), i\neq1
\end{equation*}
Gathering all the inequalities we have for $i\neq 1$:
\begin{align*}
\|\nabla w_*^{(k)}\|_{L^\infty(B_i)} &\lesssim \frac{R|x_-^1|}{|x_+^1-x_+^i|^3} |W|.
\end{align*}
Analogously for $i=1$ we apply Lemma \ref{JO} for $k=3$ and obtain up to a constant depending on $\|\rho\|_{L^\infty(0,T; L^\infty(\mathbb{R}^3))}$: 
\begin{align*}
|\nabla w_*^{(k+1)}(x)| & \leq |\nabla w_*^{(k)}(x)| +  \underset{j\neq 1 }{{\sum}}  |\nabla  U[\mathcal{W}_1^{j,(k)},\mathcal{W}_2^{j,(k)}](x)| , \\
& \leq \underset{p=0}{\overset{k}{\sum}}\underset{j\neq 1 }{{\sum}}  |\nabla  U[\mathcal{W}_1^{j,(p)},\mathcal{W}_2^{j,(p)}](x)|,  \\
& \leq C\underset{p=0}{\overset{k}{\sum}}\underset{j\neq 1 }{{\sum}} \left ( \frac{R}{d_{1j}^2}\right) \max( |\mathcal{W}_1^{j,(p)}|,|\mathcal{W}_2^{j,(p)}|),\\
& \lesssim \underset{j\neq 1 }{{\sum}} \left ( \frac{R}{d_{1j}^2} \frac{R|x_-^1|}{d_{1j}^2} \right)|W|,\\
&\lesssim \frac{R|x_-^1|}{d_{\min}} \left( \frac{W_\infty^3}{d_{\min}^3} +| \log W_\infty| \right) |W|.
\end{align*}
\textbf{Estimate for $\| w_*^{(k)}\|_\infty$.}
Let $x\in B(x_\alpha^i,R)$, $\alpha=1,2$, $i\neq 1$. We have according to formula \eqref{reflection} : 
\begin{align*}
| w_*^{(k+1)}(x)| & = \left | w_*^{(k)}(x)- w_*^{(k)}(x_\alpha^i)- \underset{j\neq i}{\sum} U[\mathcal{W}_1^{j,(k)},\mathcal{W}_2^{j,(k)}](x)  \right|,\\
& \leq R \|\nabla w_*^{(k)}\|_\infty + \underset{j\neq i}{\sum} \left| U[\mathcal{W}_1^{j,(k)},\mathcal{W}_2^{j,(k)}](x) \right|,\\
&\leq R \|\nabla w_*^{(k)}\|_\infty + C \underset{j\neq i}{\sum} \frac{R}{d_{ij}} \max ( |\mathcal{W}_1^{j,(k)}|,|\mathcal{W}_2^{j,(k)}|) ,  \\ 
& \lesssim R \|\nabla w_*^{(k)}\|_\infty +  \left ( \underset{j\neq i,1}{\sum} \frac{R}{d_{ij}} \eta^{k-1} \frac{R}{d_{1j}^2} + \frac{R}{d_{1i}} \eta^{k-1} \frac{R}{d_{\min}}  \right )|x_-^1| |W|.
\end{align*}
where $\eta = 2C r_0 L <1$ is the constant appearing in Proposition \ref{prop1}. 
Reproducing the same computations as before yields: 
$$
\| w_*^{(k+1)}\|_{L^\infty(B_i)}  \lesssim R  \|\nabla w_*^{(k)}\|_\infty + \frac{R}{d_{1i}^2}|x_-^1| \eta^{k-1} |W|.
$$
In the case $i=1$ we have: 
\begin{align*}
| w_*^{(k+1)}(x)| & = \left | w_*^{(k)}(x)- w_*^{(k)}(x_\alpha^i)- \underset{j\neq
i}{\sum} U[\mathcal{W}_1^{j,(k)},\mathcal{W}_2^{j,(k)}](x)  \right|,\\
&\leq R \|\nabla w_*^{(k)}\|_\infty + C \underset{j\neq 1}{\sum} \frac{R}{d_{1j}} \max ( |\mathcal{W}_1^{j,(k)}|,|\mathcal{W}_2^{j,(k)}|) ,  \\ 
& \lesssim R \|\nabla w_*^{(k)}\|_\infty +  \underset{j\neq 1}{\sum} \frac{R}{d_{1j}} \eta^{k-1} \frac{R}{d_{1j}^2}|x_-^1| |W|, \\
& \lesssim R \|\nabla w_*^{(k)}\|_\infty + \frac{R}{d_{\min}} |x_-^1| \eta^{k-1}|W|.
\end{align*}
\end{proof}
Thanks to these estimates we have the following convergence rate: 
\begin{prpstn}\label{prop4}
$$
\underset{k\to \infty}{\lim} \| \nabla U[w_*^{(k+1)}] \|_{2} \lesssim {R} |x_-^1||W|.$$
%$$
%\underset{k\to \infty}{\lim} \| \nabla U[w_*^{(k+1)}] \|_{2} \lesssim \frac{R}{N^{1/3}} |x_-^1||W|.$$
\end{prpstn}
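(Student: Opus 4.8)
The plan is to reproduce, almost verbatim, the proof of Proposition \ref{convergence} (itself modelled on \cite[Proposition 3.4]{Mecherbet}), the only new input being that the boundary data of the error term $U[w_*^{(k+1)}]$ now obeys the sharper pointwise bounds of Proposition \ref{prop3}. Since $U[w_*^{(k+1)}]$ is by construction the Stokes flow on $\mathbb{R}^3\setminus\bigcup_i\overline{B}^i$ whose trace on $\partial B^i$ equals $w_*^{(k+1)}$, it suffices to control its Dirichlet energy by this trace and then let $k\to\infty$.

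First I would record the energy estimate. Note that $w_*^{(k+1)}$ is divergence free on each $B^i$ — this follows by induction from \eqref{reflection}, since $w_*^{(0)}$ is piecewise constant and every reflected field $U[\cdot,\cdot]$ is solenoidal. Because $d_{\min}\gg R$ under assumption \eqref{d_min+} (Remark \ref{rem1}), the balls $B(x_+^i,CR)\supset B^i$, $C=C(M_1)$, are pairwise disjoint for $N$ large, and on each of them one glues a divergence-free extension of $w_*^{(k+1)}$ — a scaled cut-off carrying the boundary mean of $w_*^{(k+1)}$ on $\partial B(x_\alpha^i,R)$ plus a Bogovskii corrector for the remaining divergence — exactly as in the proof of the proposition of Section 1. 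Minimality of the Stokes energy among divergence-free competitors with the given trace then yields
\begin{equation*}
\big\|\nabla U[w_*^{(k+1)}]\big\|_{L^2(\mathbb{R}^3)}^2 \;\lesssim\; \sum_{i=1}^N \Big( R\,\big\|w_*^{(k+1)}\big\|_{L^\infty(B^i)}^2 + R^3\,\big\|\nabla w_*^{(k+1)}\big\|_{L^\infty(B^i)}^2 \Big).
\end{equation*}

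Then I would feed in Proposition \ref{prop3} and pass to the limit. For $i\neq 1$ it bounds $\|\nabla w_*^{(k)}\|_{L^\infty(B^i)}$ by $\frac{R|x_-^1|}{d_{1i}^3}|W|$ and $\|w_*^{(k+1)}\|_{L^\infty(B^i)}$ by $R\|\nabla w_*^{(k)}\|_{L^\infty(B^i)}+\frac{R|x_-^1|}{d_{1i}^2}\eta^{k-1}|W|$, so (using $\eta<1$) the $i$-th summand is, as $k\to\infty$, at most $\lesssim R^3\,\frac{R^2|x_-^1|^2}{d_{1i}^6}|W|^2$. Trading $d_{1i}^{-6}\le d_{\min}^{-3}d_{1i}^{-3}$, using Lemma \ref{JO} with exponent $3$ and the lower bound $d_{\min}\gtrsim N^{-1/2}$ (Remark \ref{rem1}), one obtains $\sum_{i\neq 1}R^5|x_-^1|^2 d_{1i}^{-6}|W|^2 \lesssim R^5 N^3 |x_-^1|^2|W|^2 \lesssim R^2|x_-^1|^2|W|^2$. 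The single term $i=1$ is handled directly: Proposition \ref{prop3} bounds $\|\nabla w_*^{(k)}\|_{L^\infty(B^1)}$ and $\|w_*^{(k+1)}\|_{L^\infty(B^1)}$ by $\frac{R|x_-^1|}{d_{\min}}\big(\frac{W_\infty^3}{d_{\min}^3}+|\log W_\infty|\big)|W|$ up to a vanishing $\eta^{k-1}$ term, so after $k\to\infty$ this term is $\lesssim \frac{R^3}{d_{\min}^2}\big(\frac{W_\infty^3}{d_{\min}^3}+|\log W_\infty|\big)^2 R^2|x_-^1|^2|W|^2$ with prefactor $\to 0$ by \eqref{d_min+} and $d_{\min}\gtrsim N^{-1/2}$. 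Hence $\limsup_{k\to\infty}\|\nabla U[w_*^{(k+1)}]\|_{L^2}^2\lesssim R^2|x_-^1|^2|W|^2$; and since $\sum_p\sum_i U[\mathcal{W}_1^{i,(p)},\mathcal{W}_2^{i,(p)}]$ converges in $\dot{H}^1$ — by $\|\nabla U[\mathcal{W}_1^{i,(p)},\mathcal{W}_2^{i,(p)}]\|_2\lesssim R^{1/2}\max_\alpha|\mathcal{W}_\alpha^{i,(p)}|$, the geometric decay, and the summability in $i$ from Propositions \ref{prop1}--\ref{prop2} — the sequence $\nabla U[w_*^{(k+1)}]$ converges in $L^2$ and the $\limsup$ is an honest limit.

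The main obstacle is the self-interaction term $i=1$: there the gradient of the Oseen tensor produces the factor $\frac{W_\infty^3}{d_{\min}^3}+|\log W_\infty|$, and one must verify — by carefully counting powers of $R$ — that the two extra factors of $R$ gained in passing from $L^\infty(B^1)$ to the $L^2$ energy outweigh this growth. The remaining sum over $i\neq 1$ is routine once $d_{1i}^{-6}$ is replaced by $d_{\min}^{-3}d_{1i}^{-3}$ and Lemma \ref{JO} together with the lower bound $d_{\min}\gtrsim N^{-1/2}$ (valid under \eqref{d_min+}) are invoked.
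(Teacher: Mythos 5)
Your proposal is correct and follows essentially the paper's own argument: the same Dirichlet-energy bound $\|\nabla U[w_*^{(k+1)}]\|_2^2\lesssim R^3\sum_i\big(\|\nabla w_*^{(k+1)}\|_{L^\infty(B_i)}+R^{-1}\|w_*^{(k+1)}\|_{L^\infty(B_i)}\big)^2$ (as in \cite[Proposition 3.4]{Mecherbet}), then Proposition \ref{prop3} inserted separately for $i\neq1$ and $i=1$, the sum over $i\neq 1$ handled via $d_{1i}^{-6}\le d_{\min}^{-3}d_{1i}^{-3}$ and Lemma \ref{JO}, and the $\eta^{k-1}$ terms discarded in the limit. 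Your bookkeeping through $d_{\min}\gtrsim N^{-1/2}$ and $RN=r_0/2$ is just a slightly more explicit version of the paper's final estimate using $\frac{R}{d_{\min}}\lesssim1$ and $\frac{R}{d_{\min}^2}\lesssim\frac{W_\infty^3}{d_{\min}^2}$, so no substantive difference.
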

\begin{proof}
Reproducing exactly the same proof as in \cite[Proposition 3.4]{Mecherbet}, the main difference appears in the last estimate where we apply Proposition \ref{prop3}: 
\begin{align*}
 \| \nabla U[w_*^{(k+1)}] \|_{2}^2 &\lesssim R^3 \underset{i}{\sum} \left ( \| \nabla w_*^{(k+1)}\|_{L^\infty(B_i)} + \frac{1}{R} \| w_*^{(k+1)}\|_{L^\infty(B_i)} \right)^2,\\
 &\lesssim  R^3 \Big [ \underset{i\neq1}{\sum} \left(    \frac{R^2}{d_{1i}^6} + \frac{1}{d_{1i}^4} \eta^{2(k-1)} \right)\\
 & +  \frac{R^2}{d_{\min}^2} \left(\frac{W_\infty^3}{d_{\min}^3}+| \log W_\infty| \right)^2 + \frac{1}{d_{\min}^2} \eta^{2(k-1)} \Big] |x_-^1|^2|W|^2,\\
 &\lesssim   \left( \frac{R^4}{d_{\min}^3} +\frac{R^2}{d_{\min}} \eta^{2(k-1)} \right)\left (\frac{W_\infty^3}{d_{\min}^3}+| \log W_\infty| \right) |x_-^1|^2|W|^2\\
 & + |x_-^1|^2|W|^2  \frac{R^5}{d_{\min}^2}\left (\frac{W_\infty^3}{d_{\min}^3}+| \log W_\infty| \right)^2+\frac{R^3}{d_{\min}^2} \eta^{2(k-1)} |x_-^1|^2|W|^2\,.
\end{align*}
Taking the limit when $k$ goes to infinity we get:
\begin{align*}
 \| \nabla U[w_*^{(k+1)}] \|_{2}^2 &\lesssim R^2 |x_-^1|^2|W|^2 \left\{\frac{R^2}{d_{\min}^3} \left (\frac{W_\infty^3}{d_{\min}^3}+| \log W_\infty| \right) +   \frac{R^3}{d_{\min}^2}\left (\frac{W_\infty^3}{d_{\min}^3}+| \log W_\infty| \right)^2 \right\}.
\end{align*}
The term inside brackets is bounded as follows:
\begin{multline*}
\frac{R^2}{d_{\min}^3} \left (\frac{W_\infty^3}{d_{\min}^3}+| \log W_\infty| \right)+  \frac{R^3}{d_{\min}^2}\left (\frac{W_\infty^3}{d_{\min}^3}+| \log W_\infty| \right)^2 \\ \leq \frac{R^2}{d_{\min}^2} \frac{W_\infty^3}{d_{\min}^2}+ R | \log W_\infty|+ \frac{R}{d_{\min}^2} \left(  \frac{R}{d_{\min}} \frac{W_\infty^3}{d_{\min}^2} + R | \log W_\infty| \right)^2\,,
\end{multline*}
we recall that $\frac{R}{d_{\min}} < +\infty$ and $\frac{R}{d_{\min}^2} \leq \frac{r_0}{2} \|\rho\|_\infty \frac{W_\infty^3}{d_{\min}^2}$ according to \eqref{Wasserstein_bound}.
\end{proof}

\subsubsection{Second case}\label{second_case}
Given $W \in \mathbb{R}^3$ we consider in this part $w$ the unique solution to the Stokes equation \eqref{eq_stokes} completed with the following boundary conditions : 
\begin{equation}
w=\left\{ 
\begin{array}{rl}
W& \text {on $B(x_1^1, R),$}\\
W& \text {on $B(x_2^1, R),$}\\
0 & \text {on $ B(x_1^i, R)\cup B(x_2^i, R)$, $i\neq 1.$}
\end{array}
\right.
\end{equation}
Denote by $\mathcal{W}_\alpha^{i,(p)}$, $\alpha=1,2$, $1\leq i \leq N$, $p \in \mathbb{N}$ the velocities obtained from the method of reflections applied to the velocity field $w$. In other words : 
$$
w = \underset{p=0}{\overset{\infty}{\sum}} \underset{i}{\sum} U[\mathcal{W}_1^{i,(p)},\mathcal{W}_2^{i,(p)}] + O(R).
$$
We aim to show that, in this special case, the sequence of velocities $\mathcal{W}_\alpha^{i,(p)}$ are also smaller than the general case. This is due to the initial boundary conditions which vanish for $i\neq 1$. Indeed we have :

\begin{prpstn}\label{prop1_bis}
There exists two positive constants $C>0$ and $L=L(\|\rho\|_{L^\infty(0,T; L^\infty(\mathbb{R}^3))})$ such that  :
\begin{eqnarray*}
\underset{\alpha=1,2}{\max}\,|\mathcal{W}_\alpha^{i,(p+1)}| &\leq & C (2Cr_0L)^{p} \,  \, \frac{R}{|x_+^1-x_+^i|} \,|W| \:,\: i\neq 1 \:,\: p\geq 0, \\
\underset{\alpha}{\max}|\mathcal{W}_\alpha^{1,(p+1)}|&\leq & C 2^{p-1}(r_0CL)^{p} R\,|W| \:,\: p\geq 1,\\
\underset{\alpha}{\max}|\mathcal{W}_\alpha^{1,(1)}|&= & 0,
\end{eqnarray*}
for $N$ large enough.
\end{prpstn}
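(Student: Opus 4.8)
The plan is to mirror the induction carried out for Proposition \ref{prop1}, changing only one structural ingredient. In the present configuration the pair $B^1$ translates rigidly with velocity $W$, so by \eqref{resistance} its total drag is $F_1^1+F_2^1=-12\pi R\,\mathbb{A}(\xi_1)W$, which does \emph{not} vanish in general; hence the Stokeslet head in the representation formula \eqref{def_formule} survives, and this is precisely why the decay one should obtain here is $R/|x_+^1-x_+^i|$ rather than the $R|x_-^1|/|x_+^1-x_+^i|^2$ of Proposition \ref{prop1}. Everything else runs in parallel with \cite[Section 3.1]{Mecherbet}.

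\emph{Initialization ($p=0$).} For $i\neq1$ the only non-zero reflection of order $0$ sits on $B^1$, so \eqref{reflection} gives $\mathcal{W}_\alpha^{i,(1)}=-U[W,W](x_\alpha^i)$. Plugging in \eqref{def_formule}, the head term $-\Phi(x_+^1-x_\alpha^i)(F_1^1+F_2^1)$ is $\lesssim R|W|/|x_+^1-x_\alpha^i|$ by the uniform bound on $\mathbb{A}$ (Remark \ref{reg_A_1A_2}) and Lemma \ref{maj1}, while the remainder $\mathcal{R}[W,W](x_\alpha^i)$ is $\lesssim R^2|W|/d_{1i}^2$ by \eqref{decay_rate_R}, hence lower order since $R\ll d_{\min}$ (Remark \ref{rem1}). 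This yields $|\mathcal{W}_\alpha^{i,(1)}|\lesssim R|W|/|x_+^1-x_+^i|$ and fixes the constant $C$. For $i=1$, since every order-$0$ reflection with $j\neq1$ vanishes, $\mathcal{W}_\alpha^{1,(1)}=-\sum_{j\neq1}U[\mathcal{W}_1^{j,(0)},\mathcal{W}_2^{j,(0)}](x_\alpha^1)=0$; then $|\mathcal{W}_\alpha^{1,(2)}|\leq C\sum_{j\neq1}\tfrac{R}{d_{1j}}\max_\beta|\mathcal{W}_\beta^{j,(1)}|\lesssim R^2|W|\sum_{j\neq1}\tfrac1{d_{1j}^2}\lesssim Rr_0L|W|$, the last step being the definition \eqref{defL} of $L$ together with $R=r_0/(2N)$ and Lemma \ref{JO}. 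This is the $p=1$ instance of the second bound.

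\emph{Induction step.} Assuming the three bounds up to order $p$, I would evaluate \eqref{reflection} at the center $x_\alpha^i$, which cancels the first two terms and leaves $\mathcal{W}_\alpha^{i,(p+1)}=-\sum_{j\neq i}U[\mathcal{W}_1^{j,(p)},\mathcal{W}_2^{j,(p)}](x_\alpha^i)$; bound each summand by $C\tfrac{R}{d_{ij}}\max_\beta|\mathcal{W}_\beta^{j,(p)}|$ via \eqref{decay_rate} and Lemma \ref{maj1}, and split the sum into $j=1$ and $j\neq1,i$. In the block $j\neq1,i$ I would insert $\tfrac1{d_{ij}d_{1j}}\leq\tfrac1{d_{i1}}\big(\tfrac1{d_{ij}}+\tfrac1{d_{1j}}\big)$ and use \eqref{defL}, producing a factor $\tfrac{R}{d_{i1}}r_0L$; for the term $j=1$ I would absorb the extra $R$ using $R\ll r_0L$ (which follows from Remark \ref{rem1}, since $r_0L$ stays bounded below while $R\sim1/N$). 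Collecting, both pieces come out $\lesssim (Cr_0L)^p2^{p-1}\tfrac{R}{d_{1i}}|W|$ (resp. $2^{p-2}$), and $2^{p-1}+2^{p-2}\leq2^p$ closes the first bound. For $i=1$, $p\geq1$ the computation is identical but simpler: $\mathcal{W}_\alpha^{1,(p+1)}=-\sum_{j\neq1}U[\mathcal{W}_1^{j,(p)},\mathcal{W}_2^{j,(p)}](x_\alpha^1)$ is $\lesssim C\sum_{j\neq1}\tfrac{R}{d_{1j}}\cdot C(2Cr_0L)^{p-1}\tfrac{R}{d_{1j}}|W|\lesssim(2Cr_0L)^{p-1}r_0L\,R|W|$, which is of the announced form $2^{p-1}(r_0CL)^pR|W|$.

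\emph{Where the work is.} There is no deep obstacle: the argument is exactly that of \cite[Section 3.1]{Mecherbet}, already reproduced for Proposition \ref{prop1}. The one genuinely new point is the non-vanishing of $F_1^1+F_2^1$ in this geometry, which downgrades the first-reflection decay from $R^2/d^2$ to $R/d$ and thus accounts for the different exponents in the statement. The rest is careful bookkeeping: keeping the geometric factor $(2Cr_0L)^p$ consistent across the two branches of the recursion via the elementary inequality on $1/(d_{ij}d_{1j})$ and the definition \eqref{defL} of $L$, and invoking $R\ll r_0L$ wherever a $j=1$ self-interaction term shows up. As for Corollary \ref{prop2} in the first case, summability of these per-level bounds in $p$ — hence their usefulness — rests on the smallness of $r_0\max(\|\rho\|_{L^\infty(0,T;L^\infty(\mathbb{R}^3))},\|\rho\|_{L^\infty(0,T;L^\infty(\mathbb{R}^3))}^{1/3},\|\rho\|_{L^\infty(0,T;L^\infty(\mathbb{R}^3))}^{2/3})$ guaranteeing $2CLr_0<1$.
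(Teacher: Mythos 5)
Your proposal is correct and follows essentially the route the paper intends: the paper's proof is literally "analogous to Proposition \ref{prop1}", and your reconstruction carries out exactly that induction, with the one genuine modification correctly identified — for boundary data $(W,W)$ the total force $F_1^1+F_2^1=-12\pi R\,\mathbb{A}(\xi_1)W$ no longer cancels, so the Stokeslet head of \eqref{def_formule} survives and the first-reflection decay drops to $R/d_{1i}$, which propagates through the recursion via \eqref{decay_rate}, Lemma \ref{maj1}, the inequality $\tfrac{1}{d_{ij}d_{1j}}\leq\tfrac{1}{d_{i1}}\bigl(\tfrac{1}{d_{ij}}+\tfrac{1}{d_{1j}}\bigr)$ and the definition \eqref{defL} of $L$. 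The bookkeeping of the factors $(2Cr_0L)^p$, the absorption of the $j=1$ term using $R\ll r_0L$ for $N$ large, and the base cases ($\mathcal{W}_\alpha^{1,(1)}=0$, the $p=0$ and $p=1$ estimates) all check out.
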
 
\begin{proof}
The proof is analogous to the one of Proposition \ref{prop1}.
\end{proof}
According to these estimates, if we assume that $r_0\max(\|\rho\|_{L^\infty(0,T; L^\infty(\mathbb{R}^3))},\|\rho\|_{L^\infty(0,T; L^\infty(\mathbb{R}^3))}^{1/3},\\ \|\rho\|_{L^\infty(0,T; L^\infty(\mathbb{R}^3))}^{2/3})$ is small enough to have $2LCr_0<1$ then the following result holds true: 
\begin{crllr}\label{prop2_bis}
We have for $N$ large enough:
\begin{eqnarray*}
\underset{k=0}{\overset{\infty}{\sum}} \underset{\alpha=1,2}{\max}\,|\mathcal{W}_\alpha^{i,(p+1)}| &\lesssim &\frac{R}{|x_+^1-x_+^i|} \, |W| \:,\: i\neq 1, \\
\underset{k=0}{\overset{\infty}{\sum}}  \underset{\alpha}{\max}|\mathcal{W}_\alpha^{1,(p+1)}|&\lesssim &  R\, |W|.
\end{eqnarray*}
\end{crllr}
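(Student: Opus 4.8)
The plan is to sum, term by term, the pointwise bounds established in Proposition~\ref{prop1_bis}; the only structural ingredient is that the smallness hypothesis placed just before the statement forces the common ratio $\eta:=2CLr_0$ to be strictly less than $1$, so every series in sight is a convergent geometric series. In all cases the implicit constants in $\lesssim$ will inherit exactly the dependence permitted by the paper's convention (on $r_0$, $\|\rho\|_{L^\infty(0,T;L^\infty(\mathbb{R}^3))}$, $\mathcal{E}_1$, $\mathcal{E}_2$), since $L=L(\|\rho\|_{L^\infty(0,T;L^\infty(\mathbb{R}^3))})$ and $C$ are already $N$-uniform for $N$ large enough.

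First I would treat the case $i\neq 1$. Rewriting $(2Cr_0L)^p=\eta^p$, the first estimate of Proposition~\ref{prop1_bis} reads $\max_{\alpha=1,2}|\mathcal{W}_\alpha^{i,(p+1)}|\leq C\,\eta^p\,\dfrac{R}{|x_+^1-x_+^i|}\,|W|$ for $p\geq 0$. Summing over $p\geq 0$ produces the factor $\sum_{p\geq 0}\eta^p=\dfrac{1}{1-\eta}$, so that $\sum_{p\geq 0}\max_{\alpha}|\mathcal{W}_\alpha^{i,(p+1)}|\leq \dfrac{C}{1-\eta}\,\dfrac{R}{|x_+^1-x_+^i|}\,|W|\lesssim \dfrac{R}{|x_+^1-x_+^i|}\,|W|$, which is the first claimed bound.

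Next I would treat $i=1$. The term $p=0$, i.e. $\max_\alpha|\mathcal{W}_\alpha^{1,(1)}|$, vanishes by the third estimate of Proposition~\ref{prop1_bis}, so the sum effectively starts at $p=1$. For $p\geq 1$ the second estimate gives $\max_\alpha|\mathcal{W}_\alpha^{1,(p+1)}|\leq C\,2^{p-1}(r_0CL)^p R|W|=\tfrac{C}{2}\,\eta^{p}\,R|W|$; summing the geometric tail $\sum_{p\geq 1}\eta^{p}=\dfrac{\eta}{1-\eta}$ yields $\sum_{p\geq 0}\max_{\alpha}|\mathcal{W}_\alpha^{1,(p+1)}|\leq \dfrac{C\eta}{2(1-\eta)}\,R|W|\lesssim R|W|$, which is the second claim.

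I do not expect any genuine obstacle: the statement is purely the observation that the three bounds of Proposition~\ref{prop1_bis} assemble into convergent geometric series once $2LCr_0<1$, precisely the reason that smallness condition was imposed. The only point to keep track of — as elsewhere in the paper — is the uniformity in $N$ of all constants, which is automatic here since Proposition~\ref{prop1_bis} already supplies $N$-uniform constants for $N$ large.
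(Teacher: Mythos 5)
Your proposal is correct and follows exactly the route the paper intends: the corollary is stated as an immediate consequence of Proposition \ref{prop1_bis} once the smallness assumption makes $\eta=2CLr_0<1$, and the proof is precisely the termwise summation of the resulting geometric series (using $2^{p-1}(r_0CL)^p=\tfrac12\eta^p$ and the vanishing of $\mathcal{W}_\alpha^{1,(1)}$), with constants uniform in $N$ as you note.
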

\section{Extraction of the first order terms for the velocities}
This section is devoted to the computation of the velocities $U_+^i, U_-^i$ for $1 \leq i \leq N$.
The idea of proof is to apply the method of reflections to the velocity field $u^N$ as presented above and we set :
$$
\underset{p=0}{\overset{\infty}{\sum}} U_\alpha^{i,(p)} = U_\alpha^{i,\infty},1\leq \alpha \leq 2\,,\, 1 \leq i \leq N,
$$
we also use the following notations for the forces associated to the solutions $U[ U_1^{i,\infty}, U_2^{i,\infty}]$:
\begin{eqnarray}\label{forces_formula}
F_1^{i,\infty}& =&- 6\pi R (A_1(\xi_i) U_1^{i,\infty}+ A_2 (\xi_i)U_2^{i, \infty}), \nonumber \\
F_2^{i,\infty}& =&- 6\pi R (A_2(\xi_i) U_1^{i,\infty}+ A_1 (\xi_i)U_2^{i, \infty}). 
\end{eqnarray}
\subsection{Preliminary estimates}
\begin{prpstn}\label{velocity_mr_bis}
If  $r_0\|\rho\|_{L^\infty(0,T; L^\infty(\mathbb{R}^3))},\|\rho\|_{L^\infty(0,T; L^\infty(\mathbb{R}^3))}^{1/3},\|\rho\|_{L^\infty(0,T; L^\infty(\mathbb{R}^3))}^{2/3})$ is small enough and assumptions \eqref{hyp_bound}, \eqref{conv_mesure}, \eqref{d_min+} hold true we have for $N$ large enough and for all $1\leq i \leq N$
\begin{multline*}
\frac{U_1^i+U_2^i}{2}= (A_1(\xi_i)+A_2(\xi_i))^{-1} \frac{m}{6\pi R} g \\+\frac{1}{2} \underset{j \neq i}{\sum}\left ( U[U_1^{j,\infty},U_2^{j,\infty}](x_1^i)+U[U_1^{j,\infty},U_2^{j,\infty}](x_2^i) \right )  + O(\sqrt{R})\underset{\alpha=1,2}{\underset{1\leq i \leq N}{\max}} |U_\alpha^{i}|.
\end{multline*}
$$
\frac{U_1^{i,\infty}+U_2^{i,\infty}}{2}=(A_1(\xi_i)+A_2(\xi_i))^{-1} \frac{m}{6\pi R} g  +  O(\sqrt{R})\underset{\alpha=1,2}{\underset{1\leq i \leq N}{\max}} |U_\alpha^{i}|.
$$
\end{prpstn}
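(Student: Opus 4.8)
The plan is to run the method of reflections \eqref{reflection} on $u^N$ and extract the velocities from the force balance $F_\alpha^i=-mg$. By Lemma \ref{conv_serie} the series $U_\alpha^{i,\infty}=\sum_{p\ge0}U_\alpha^{i,(p)}$ converge absolutely with $\max_{\alpha,i}|U_\alpha^{i,\infty}|\lesssim\max_{\alpha,i}|U_\alpha^i|$, and by Proposition \ref{convergence} the remainders $U[u_*^{(k+1)}]$ converge in $\dot{H}^1(\mathbb{R}^3)$ to a field $U[u_*^\infty]$ with $\|\nabla U[u_*^\infty]\|_2\lesssim R\max_{\alpha,i}|U_\alpha^i|$. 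Since $(U_1,U_2)\mapsto U[U_1,U_2]$ is linear and the decay bound \eqref{decay_rate} is uniform in the data, one may interchange the $p$-summation with this operator and obtain, on $\mathbb{R}^3\setminus\bigcup_i\overline B^i$, the decomposition $u^N=\sum_{j=1}^N U[U_1^{j,\infty},U_2^{j,\infty}]+U[u_*^\infty]$, together with the analogous one for $p^N$.

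\textbf{The second identity.} Next I would integrate the stress of this decomposition over $\partial B_\alpha^i$, $\alpha=1,2$. The term $j=i$ contributes exactly the drag $F_\alpha^{i,\infty}$ of \eqref{forces_formula}, by linearity of the resistance problem for the pair $B^i$. For $j\ne i$, the field $U[U_1^{j,\infty},U_2^{j,\infty}]$ solves Stokes on a neighbourhood of $\overline B_\alpha^i$ (here $R\ll d_{\min}$ and Lemma \ref{maj1} are used), so its traction integrates to $0$ over $\partial B_\alpha^i$ by the divergence theorem. This leaves an error term $G_\alpha^i:=\int_{\partial B_\alpha^i}\Sigma(U[u_*^\infty],P[u_*^\infty])\,n\,d\sigma$, which I claim satisfies $|G_\alpha^i|\lesssim R^{3/2}\max_{\alpha,i}|U_\alpha^i|$. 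Granting this, inserting $F_\alpha^i=-mg$ into \eqref{forces_formula} and adding the equations $\alpha=1$ and $\alpha=2$ makes the off-diagonal blocks combine into $-2mg=-6\pi R\,(A_1(\xi_i)+A_2(\xi_i))(U_1^{i,\infty}+U_2^{i,\infty})+O(R^{3/2})\max_{\alpha,i}|U_\alpha^i|$; inverting $\mathbb{A}(\xi_i)=A_1(\xi_i)+A_2(\xi_i)\in L^\infty$ (Remark \ref{reg_A_1A_2}) and dividing by $2$ gives the second identity, the remainder being $\tfrac1{6\pi R}O(R^{3/2})=O(\sqrt R)\max_{\alpha,i}|U_\alpha^i|$.

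\textbf{The error--force bound, which I expect to be the main obstacle.} This is the pair version of the drag estimate of \cite[Section 3]{Mecherbet}, available precisely because the pair solutions $U[U_1,U_2]$ decay like the Oseen tensor, see \eqref{def_formule}--\eqref{decay_rate}. Fixing a unit vector $e$, let $V=V_\alpha^i[e]$ be the explicit translating--sphere Stokes flow outside $B_\alpha^i$ with boundary value $e$ and decay at infinity; its stress is of size $\lesssim 1/R$ on $\partial B_\alpha^i$ and its far field is a Stokeslet of strength $O(R)$. Both $U[u_*^\infty]$ and $V$ solve Stokes on the annulus $B(x_\alpha^i,3R)\setminus\overline B_\alpha^i$, which (again by $R\ll d_{\min}$) is disjoint from every $B^j$ with $j\ne i$; Lorentz reciprocity on this annulus then expresses $G_\alpha^i\cdot e$ through boundary pairings of $\Sigma(V)$ with $U[u_*^\infty]$ and of $\Sigma(U[u_*^\infty])$ with $V$ on the two spheres of radius $O(R)$. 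Each such pairing is controlled by Cauchy--Schwarz together with the scale-invariant trace inequality $\|w\|_{L^2(\partial B(x_\alpha^i,\rho))}\lesssim\rho^{1/2}\|\nabla w\|_{L^2}$, interior Stokes regularity on the annulus, and $\|\nabla U[u_*^\infty]\|_2\lesssim R\max_{\alpha,i}|U_\alpha^i|$, giving $|G_\alpha^i\cdot e|\lesssim R^{3/2}\max_{\alpha,i}|U_\alpha^i|$ uniformly in $e$, hence the claim.

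\textbf{The first identity.} Finally, evaluating the recursion \eqref{reflection} at $x=x_\alpha^i$ telescopes to $U_\alpha^{i,(p+1)}=u_*^{(p+1)}(x_\alpha^i)=-\sum_{j\ne i}U[U_1^{j,(p)},U_2^{j,(p)}](x_\alpha^i)$; summing over $p\ge0$ and commuting with the solution operator yields $U_\alpha^i=U_\alpha^{i,\infty}+\sum_{j\ne i}U[U_1^{j,\infty},U_2^{j,\infty}](x_\alpha^i)$. Averaging over $\alpha=1,2$ and substituting the formula just obtained for $\tfrac12(U_1^{i,\infty}+U_2^{i,\infty})$ produces the first identity, with the same $O(\sqrt R)\max_{\alpha,i}|U_\alpha^i|$ remainder and the interaction sum kept exact. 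Aside from the error--force bound, every step is linear bookkeeping built on Lemma \ref{conv_serie} and Proposition \ref{convergence}.
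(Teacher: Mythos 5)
Your proposal is correct in substance and reaches both identities with the same skeleton as the paper (the reflection expansion \eqref{reflection}, Lemma \ref{conv_serie}, Proposition \ref{convergence}, the force balance $F_1^i+F_2^i=-2mg$, and inversion of $\mathbb{A}=A_1+A_2$; your telescoping of the recursion to get the first identity from the second is exactly the paper's step). Where you genuinely diverge is the key error estimate. The paper never computes the traction of the remainder on the spheres: it tests the force balance against the auxiliary $N$-cluster flow $w$ of Section \ref{second_case} (boundary data $W$ on both spheres of cluster $i$, zero on all the others), writes $2mg\cdot W=2\int D(u^N):\nabla w$, and controls the cross term $\int \nabla U[u_*^{(k+1)}]:D(w)$ by running the method of reflections on $w$ itself (Proposition \ref{prop1_bis}, Corollary \ref{prop2_bis}), which yields $\|\nabla w_1\|_{L^2}\lesssim \sqrt{R}\,|W|$ and hence the $O(R\sqrt R)|W|\max|U_\alpha^i|$ error. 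You instead pass to the limit decomposition $u^N=\sum_j U[U_1^{j,\infty},U_2^{j,\infty}]+U[u_*^\infty]$, read off the drag through the resistance formula \eqref{forces_formula} (the $j\neq i$ contributions vanishing by the divergence theorem), and bound the remainder's traction $G_\alpha^i$ by a local reciprocity/cutoff argument with a single translating-sphere test flow, using only the global bound $\|\nabla U[u_*^\infty]\|_{L^2}\lesssim R\max|U_\alpha^i|$. Your route is more local and dispenses entirely with the ``second particular case'' estimates, at the price of having to make sense of, and estimate, the remainder's normal stress on $\partial B_\alpha^i$ (an $H^{-1/2}$ pairing, a local pressure or interior-regularity estimate in a thin annulus), which the paper's purely variational pairing never needs; conversely, the paper's route requires the extra auxiliary analysis of Section \ref{second_case}, which it reuses elsewhere. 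Both mechanisms give the same $O(R^{3/2})\max|U_\alpha^i|$ force error, hence $O(\sqrt R)$ after dividing by $6\pi R$.

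One repair is needed in your localization: the annulus $B(x_\alpha^i,3R)\setminus\overline{B}_\alpha^i$ need not avoid the sibling sphere $B_\beta^i$ of the same pair, since \eqref{hyp_bound} only guarantees $|x_1^i-x_2^i|=2R|\xi_i|\ge 2RM_2$ with $M_2>1$ possibly smaller than $2$; inside $B_\beta^i$ the remainder is not a Stokes solution, so reciprocity on that annulus would generate uncontrolled boundary terms on $\partial B_\beta^i$. Take instead an outer radius of order $R\bigl(1+(M_2-1)\bigr)$, or keep and estimate the extra boundary terms: the gap between the two spheres of a pair is $2R(|\xi_i|-1)\ge 2R(M_2-1)$, comparable to $R$, so all your scale-invariant trace, pressure and interior estimates go through with constants depending on $M_2$, consistent with the constants $C(M_1)$, $C(M_2)$ used throughout the paper.
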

\begin{proof}
We prove the formula for $i=1$ and the same holds true for all $1\leq i \leq N$.
We set $w$ the unique solution to the Stokes equation \eqref{eq_stokes} completed with the following boundary conditions : 
\begin{equation}
w=\left\{ 
\begin{array}{rl}
W& \text {on $B(x_1^1, R),$}\\
W& \text {on $B(x_2^1, R),$}\\
0 & \text {on $ B(x_1^i, R)\cup B(x_2^i, R)$, $i\neq 1,$}
\end{array}
\right.
\end{equation}
with $W$ an arbitrary vector of $\mathbb{R}^3$. We use the method of reflections to obtain :
\begin{align*}
2 mg \cdot W & = 2\int D (u^N) : \nabla w \\
& = -(F_1^{1,\infty} + F_2^{1,\infty}) \cdot W + \underset{k \to \infty}{\lim}2 \int D \left( U[u_*^{(k+1)}] \right): \nabla w .
\end{align*}
For the last term we apply again the method of reflections to the velocity field $w$, see Section \ref{second_case}. We set: $$w_1=\underset{p=0}{\overset{k}{\sum}} \underset{i=1}{\overset{N}{\sum}} U[\mathcal{W}_1^{i,(p)},\mathcal{W}_2^{i,(p)}]:=\underset{i=1}{\overset{N}{\sum}} U[\mathcal{W}_1^{i,\infty},\mathcal{W}_2^{i,\infty}], $$
with 
$$
\| \nabla w- \nabla w_1\|_{L^2( \mathbb{R}^3 \setminus \overline{\underset{i}{\bigcup} B_i})} \leq R |W|.
$$ 
We obtain :
$$
2\int D \left( U[u_*^{(k+1)}]\right): \nabla w  = 2\int \nabla U[u_*^{(k+1)}]: D \left( w_1\right) +2 \int D \left( U[u_*^{(k+1)}]\right): \nabla (w-w_1).
$$
Thanks to the method of reflections, the second term on the right hand side can be bounded by $R^2|W| \underset{\alpha=1,2}{\underset{1 \leq i \leq N}{\max}} |U_\alpha^i|$ (see Proposition \ref{convergence}). 
For the first term, direct computations using \eqref{resistance} show that
\begin{align*}
\left \|\nabla w_1 \right\|_{L^2(\mathbb{R}^3 \setminus \overline{\underset{i}{\bigcup} B_i})}& \leq \underset{i}{\sum} \left \|\nabla  U[\mathcal{W}_1^{i,\infty},\mathcal{W}_2^{i,\infty}] \right\|_{L^2(\mathbb{R}^3 \setminus B_i)},\\
&= \underset{i}{\sum} \big( - \int_{\partial B(x_1^i,R)\cup\partial B(x_2^i,R) }\Sigma\left(U[\mathcal{W}_1^{i,\infty},\mathcal{W}_2^{i,\infty}],P[\mathcal{W}_1^{i,\infty},\mathcal{W}_2^{i,\infty}] \right) n \cdot U[\mathcal{W}_1^{i,\infty},\mathcal{W}_2^{i,\infty}]d \sigma \big)^{1/2}\\
& =\underset{i}{\sum} \Big( 6\pi R \left(A_1(\xi_i) \mathcal{W}_1^{i,\infty}+A_2(\xi_i) \mathcal{W}_2^{i,\infty}  \right)\cdot \mathcal{W}_1^{i,\infty}\\
&+6\pi R \left(A_2(\xi_i) \mathcal{W}_1^{i,\infty}+A_1(\xi_i) \mathcal{W}_2^{i,\infty}  \right)\cdot \mathcal{W}_2^{i,\infty} \Big)^{1/2}\\
&\leq C \sqrt{R}\underset{i}{\sum} \left(\left|\mathcal{W}_1^{i,\infty} \right|+\left|\mathcal{W}_2^{i,\infty} \right| \right).
\end{align*}
Using Corollary \ref{prop2_bis} we get that $ \left \|\nabla w_1 \right\|_{L^2(\mathbb{R}^3 \setminus \overline{\underset{i}{\bigcup} B_i})} \leq C \sqrt{R} |W|$. Finally, we have: 
\begin{align*}
2 mg \cdot W = -(F_1^{1,\infty}+F_2^{1,\infty}) \cdot W + O(R \sqrt{R})|W|\underset{\alpha=1,2}{\underset{1 \leq i \leq N}{\max}} |U_\alpha^i|.
\end{align*}
This being true for all $W\in \mathbb{R}^3$ it yields:
\begin{align*}
2 mg  = -(F_1^{1,\infty}+F_2^{1,\infty}) + O(R\sqrt{R})\underset{\alpha=1,2}{\underset{1 \leq i \leq N}{\max}} |U_\alpha^i|.
\end{align*}
Using the definitions of $F_1^{1,\infty}$ and $F_2^{1,\infty}$, see \eqref{forces_formula}, this becomes:
$$
2 mg = 6\pi R (A_1(\xi_1)+A_2(\xi_1))(U_1^{1,\infty}+U_2^{1,\infty}) + O(R\sqrt{R})\underset{\alpha=1,2}{\underset{1 \leq i \leq N}{\max}} |U_\alpha^i|.
$$
Recall that $A_1(\xi)$ and $A_2(\xi)$ are of the form $h_1(|\xi|) \mathbb{I} + h_2(|\xi|) \frac{\xi \otimes \xi}{|\xi|^2} $. Moreover, according to formulas \eqref{inversion_bis} $A_1+A_2$ (resp. $A_1-A_2$) is invertible and its inverse is $(a_1+a_2)$ (resp. $a_1-a_2$). Thus :
\begin{equation}\label{formula_1}
U_1^{1,\infty}+U_2^{1,\infty}=2(A_1(\xi_1)+A_2(\xi_1))^{-1} \frac{m}{6\pi R} g  + \frac{1}{6\pi}(A_1(\xi_1)+A_2(\xi_1))^{-1} O(\sqrt{R})\underset{\alpha=1,2}{\underset{1 \leq i \leq N}{\max}} |U_\alpha^i|.
\end{equation}
We use the fact that $\|(A_1(\xi_1)+A_2(\xi_1))^{-1} \|$ is uniformly bounded independently of the particles and $N$ to get
$$
U_1^{1,\infty}+U_2^{1,\infty}=2(A_1(\xi_1)+A_2(\xi_1))^{-1} \frac{m}{6\pi R} g  +  O(\sqrt{R})\underset{\alpha=1,2}{\underset{1 \leq i \leq N}{\max}} |U_\alpha^i|.
$$
On the other hand, as $(U_1^{1,(0)},U_2^{1,(0)})=(U_1^1,U_2^1)$ we rewrite formula \eqref{formula_1} as :
\begin{equation*}
U_1^1+U_2^1 = - \underset{p=1}{\overset{\infty}{\sum}}(U_1^{1,(p)}+U_2^{1,(p)}) +(A_1(\xi_1)+A_2(\xi_1))^{-1} \frac{m}{6\pi R} g  \\+ 2\frac{1}{6\pi}(A_1(\xi_1)+A_2(\xi_1))^{-1} O(\sqrt{R})\underset{\alpha=1,2}{\underset{1 \leq i \leq N}{\max}} |U_\alpha^i|.
\end{equation*} 
Using again formula \eqref{reflection} this yields : 
\begin{align*}
U_1^1+U_2^1 &=  \underset{p=1}{\overset{\infty}{\sum}} \underset{j \neq1}{\sum}U[U_1^{j,(p-1)},U_2^{j,(p-1)}](x_1^1)+U[U_1^{j,(p-1)},U_2^{j,(p-1)}](x_2^1),\\
&  + 2 (A_1(\xi_1)+A_2(\xi_1))^{-1} \frac{m}{6\pi R} g  + \frac{1}{6\pi}(A_1(\xi_1)+A_2(\xi_1))^{-1} O(\sqrt{R})\underset{\alpha=1,2}{\underset{1 \leq i \leq N}{\max}} |U_\alpha^i|,\\
&=  \underset{j \neq1}{\sum} \left ( U[U_1^{j,\infty},U_2^{j,\infty}](x_1^1)+U[U_1^{j,\infty},U_2^{j,\infty}](x_2^1) \right )  +2(A_1(\xi_1)+ A_2(\xi_1))^{-1} \frac{m}{6\pi R} g, \\
& + \frac{1}{6\pi}(A_1(\xi_1)+A_2(\xi_1))^{-1} O(\sqrt{R})\underset{\alpha=1,2}{\underset{1 \leq i \leq N}{\max}} |U_\alpha^i|.
\end{align*}
We conclude by recalling that $\|(A_1+A_2)^{-1}\|_\infty=\| \mathbb{A}^{-1}\|_\infty $ is uniformly bounded.
\end{proof}
Applying the same ideas we obtain the following result:
\begin{prpstn}\label{velocity_mr}
for all $1\leq i \leq N$ we have :
$$
U_1^{i}-U_2^{i} =\underset{j\neq i}{\sum} \left ( U[U_1^{j,\infty},U_2^{j,\infty}](x_1^i) - U[U_1^{j,\infty},U_2^{j,\infty}](x_2^i) \right) +  O ( \sqrt{R}  |x_-^i| ) \underset{\alpha=1,2}{\underset{1 \leq i \leq N}{\max}} |U_\alpha^i|.\\
$$
$$
U_1^{i,\infty}-U_2^{i,\infty} = O ( \sqrt{R}|x_-^i| )\underset{\alpha=1,2}{\underset{1 \leq i \leq N}{\max}} |U_\alpha^i|.
$$
\end{prpstn}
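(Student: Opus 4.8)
The plan is to mirror the proof of Proposition \ref{velocity_mr_bis}, but now testing the solution $u^N$ against a velocity field $w$ whose boundary data on the first cluster is \emph{antisymmetric} rather than symmetric, i.e. $w=W$ on $\partial B(x_1^1,R)$, $w=-W$ on $\partial B(x_2^1,R)$, and $w=0$ on all other clusters. This is precisely the configuration analysed in Section \ref{first_case}. First I would write the energy identity $2\int D(u^N):\nabla w = -(F_1^{1,\infty}-F_2^{1,\infty})\cdot W + \lim_{k\to\infty} 2\int D(U[u_*^{(k+1)}]):\nabla w$, using that the left-hand side equals the work of the gravitational forces, which is $0$ here since $mg\cdot W + mg\cdot(-W)=0$ (this is the source of the improved $|x_-^i|$ scaling: the symmetric part of the forcing does not couple to the antisymmetric test field).

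Next I would estimate the error term $\lim_k 2\int D(U[u_*^{(k+1)}]):\nabla w$. As in the previous proposition, I would split $\nabla w = \nabla w_1 + \nabla(w-w_1)$ where $w_1=\sum_i U[\mathcal{W}_1^{i,\infty},\mathcal{W}_2^{i,\infty}]$ is the reflection approximation for $w$ and $\|\nabla(w-w_1)\|_2 \lesssim R|x_-^1||W|$ by Proposition \ref{prop4}. The term $\int D(U[u_*^{(k+1)}]):\nabla(w-w_1)$ is then controlled by $\|\nabla U[u_*^{(k+1)}]\|_2\,\|\nabla(w-w_1)\|_2 \lesssim R^2|x_-^1||W|\max_{i,\alpha}|U_\alpha^i|$ using Proposition \ref{convergence}. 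For the term $\int \nabla U[u_*^{(k+1)}]:D(w_1)$, I would bound $\|\nabla w_1\|_2$ by $C\sqrt{R}\sum_i(|\mathcal{W}_1^{i,\infty}|+|\mathcal{W}_2^{i,\infty}|)$ via the resistance-matrix energy identity \eqref{resistance}, and then invoke Corollary \ref{prop2} to get $\sum_i(|\mathcal{W}_1^{i,\infty}|+|\mathcal{W}_2^{i,\infty}|)\lesssim R|x_-^1||W|/d_{\min} + \ldots \lesssim |x_-^1||W|$ (using $R\ll d_{\min}$), so $\|\nabla w_1\|_2\lesssim \sqrt{R}|x_-^1||W|$ and this contribution is $\lesssim \sqrt{R}|x_-^1||W|\max_{i,\alpha}|U_\alpha^i|$. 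Combining, the whole error term is $O(\sqrt R\,|x_-^1|)|W|\max_{i,\alpha}|U_\alpha^i|$.

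Then, since the identity $2mg\cdot W - 2mg\cdot W = 0 = -(F_1^{1,\infty}-F_2^{1,\infty})\cdot W + O(\sqrt R|x_-^1|)|W|\max|U_\alpha^i|$ holds for every $W\in\mathbb{R}^3$, I conclude $F_1^{1,\infty}-F_2^{1,\infty} = O(\sqrt R|x_-^1|)\max_{i,\alpha}|U_\alpha^i|$. By \eqref{forces_formula}, $F_1^{1,\infty}-F_2^{1,\infty} = -6\pi R (A_1(\xi_1)-A_2(\xi_1))(U_1^{1,\infty}-U_2^{1,\infty})$, and by \eqref{inversion_bis} the matrix $A_1-A_2$ is invertible with inverse $a_1-a_2$, which is uniformly bounded by Remark \ref{reg_A_1A_2}. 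Dividing by $6\pi R$ and applying $(A_1(\xi_1)-A_2(\xi_1))^{-1}$ yields $U_1^{1,\infty}-U_2^{1,\infty} = O(\sqrt R|x_-^1|)\max_{i,\alpha}|U_\alpha^i|$, which is the second formula. For the first formula, I would use $(U_1^{1,(0)},U_2^{1,(0)})=(U_1^1,U_2^1)$ together with the reflection recursion \eqref{reflection} to write $U_1^1-U_2^1 = -\sum_{p\geq1}(U_1^{1,(p)}-U_2^{1,(p)}) + (U_1^{1,\infty}-U_2^{1,\infty})$, and then resum $\sum_{p\geq1}\sum_{j\neq1}\big(U[U_1^{j,(p-1)},U_2^{j,(p-1)}](x_1^1) - U[\cdots](x_2^1)\big)$ into $\sum_{j\neq1}\big(U[U_1^{j,\infty},U_2^{j,\infty}](x_1^1) - U[U_1^{j,\infty},U_2^{j,\infty}](x_2^1)\big)$, absorbing the $U_1^{1,\infty}-U_2^{1,\infty}$ term into the error.

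The main obstacle I anticipate is not any single step but keeping careful track of the $|x_-^1|$ (equivalently $R$, by \eqref{hyp_bound}) factors throughout: the whole point of this proposition is the sharper scaling $O(\sqrt R|x_-^i|)$ rather than $O(\sqrt R)$, and this improvement rests entirely on (i) the cancellation $mg\cdot W + mg\cdot(-W)=0$ in the energy identity, (ii) the refined reflection estimates of Proposition \ref{prop1} and Corollary \ref{prop2} which already carry a factor $|x_-^1|$, and (iii) the improved convergence rate of Proposition \ref{prop4}. One must verify that every estimate used indeed comes with the extra $|x_-^1|$ and that the smallness condition on $r_0\max(\|\rho\|_\infty,\|\rho\|_\infty^{1/3},\|\rho\|_\infty^{2/3})$ is exactly the one ensuring $\eta = 2CLr_0<1$ so that the geometric series in the reflection process converge; the rest is a bookkeeping exercise parallel to Proposition \ref{velocity_mr_bis}.
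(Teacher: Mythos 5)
Your overall strategy is the paper's: test $u^N$ against the antisymmetric field $w$ of Section \ref{first_case}, use the energy identity (whose left-hand side vanishes since $F_1^1=F_2^1=-mg$), identify $-(F_1^{1,\infty}-F_2^{1,\infty})\cdot W$, invert $A_1-A_2$ via \eqref{inversion_bis}, and control the remainder with Propositions \ref{prop4} and \ref{convergence}. However, there is a genuine gap at the one place where the extra factor $|x_-^1|$ must be produced. You claim $\sum_i\bigl(|\mathcal{W}_1^{i,\infty}|+|\mathcal{W}_2^{i,\infty}|\bigr)\lesssim |x_-^1||W|$ and hence $\|\nabla w_1\|_2\lesssim \sqrt{R}\,|x_-^1|\,|W|$, citing Corollary \ref{prop2}. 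But the second estimate of Corollary \ref{prop2} only controls the sum over $p\geq 1$ for the first cluster; the $p=0$ datum is $(\mathcal{W}_1^{1,(0)},\mathcal{W}_2^{1,(0)})=(W,-W)$, which contributes $O(|W|)$, not $O(|x_-^1||W|)$, and the corresponding field $U[W,-W]$ genuinely has energy $\|\nabla U[W,-W]\|_2\simeq \sqrt{R}\,|W|$ (its dissipation is $12\pi R\,(A_1-A_2)W\cdot W$). So $\|\nabla w_1\|_2\lesssim \sqrt{R}\,|W|$ only, and the Cauchy--Schwarz bound of the cross term $\int \nabla U[u_*^{(k+1)}]:D(w_1)$ gives $O(R^{3/2})|W|\max_{i,\alpha}|U_\alpha^i|$; after dividing by $6\pi R$ and applying $(A_1-A_2)^{-1}$ this reproduces the $O(\sqrt{R})$ error of Proposition \ref{velocity_mr_bis} and misses precisely the factor $|x_-^i|\sim R$ that is the content of Proposition \ref{velocity_mr}.

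To close the gap you must treat the leading piece $U[\mathcal{W}_1^{1,(0)},\mathcal{W}_2^{1,(0)}]=U[W,-W]$ of $w_1$ separately rather than lump it into a blanket energy bound: its net force vanishes ($F_1^1=-F_2^1$), so by \eqref{def_formule}--\eqref{decay_rate_R} it is a force dipole whose far field and stress carry the extra factor $R\sim|x_-^1|$; the pairing of $U[u_*^{(k+1)}]$ with this piece must be estimated through that structure (integration by parts onto the stress, using the smallness of the remainder's boundary data as in Proposition \ref{prop3}), while Cauchy--Schwarz with Corollary \ref{prop2} is fine for all the remaining contributions ($i\neq 1$, and $i=1$, $p\geq1$). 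Note also an internal inconsistency in your powers of $R$: you state the force-level error as $O(\sqrt{R}|x_-^1|)$ and then divide by $6\pi R$ yet still claim $O(\sqrt{R}|x_-^1|)$ for the velocities; the force-level error must be $O(R\sqrt{R}\,|x_-^1|)$ for the stated conclusion, which again shows that the estimate of the cross term, not bookkeeping, is the crux.
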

\begin{proof}
The proof is analogous to the one of Proposition \ref{velocity_mr_bis}. The idea is to consider this time $w$ the unique solution to the Stokes equation \eqref{eq_stokes} completed with the following boundary conditions : 
\begin{equation}
w=\left\{ 
\begin{array}{rl}
W& \text {on $B(x_1^1, R),$}\\
-W& \text {on $B(x_2^1, R),$}\\
0 & \text {on $ B(x_1^i, R)\cup B(x_2^i, R)$, $i\neq 1$,}
\end{array}
\right.
\end{equation}
with $W$ an arbitrary vector of $\mathbb{R}^3$. Using the method of reflections, Propositions \ref{prop4} and \ref{convergence} we obtain the desired result.
\end{proof}
\subsection{Estimates for $\dot{x}_+^i$}
Propositions  \ref{velocity_mr_bis} and \ref{velocity_mr} yields the following result:
\begin{crllr}\label{velocity_+}
For all $1 \leq i \leq N$ we have : 
$$
U_+^i:= (\mathbb{A}(\xi_i))^{-1} \kappa g + \frac{6\pi r_0}{N} \underset{j\neq i}{\sum} \Phi(x_+^i-x_+^j)\kappa g + O(d_{\min}),
$$
where $\mathbb{A}= A_1+A_2$.
\end{crllr}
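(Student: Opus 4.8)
The starting point is Proposition \ref{velocity_mr_bis}, whose two identities already do most of the work. The first expresses $U_+^i=\tfrac12(U_1^i+U_2^i)$ as $(\mathbb{A}(\xi_i))^{-1}\tfrac{m}{6\pi R}g$, plus the pairwise sum $\tfrac12\sum_{j\neq i}\bigl(U[U_1^{j,\infty},U_2^{j,\infty}](x_1^i)+U[U_1^{j,\infty},U_2^{j,\infty}](x_2^i)\bigr)$, plus an error $O(\sqrt{R})\max_{i,\alpha}|U_\alpha^i|$; the second gives $U_1^{j,\infty}+U_2^{j,\infty}=2(\mathbb{A}(\xi_j))^{-1}\tfrac{m}{6\pi R}g+O(\sqrt{R})\max_{i,\alpha}|U_\alpha^i|$. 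The first thing I would record is that $\tfrac{m}{6\pi R}g=\kappa g$, which is just Stokes' drag law for a single buoyant sphere of radius $R$ (consistent with $\kappa g=\tfrac29 R^2(\rho_p-\rho_f)g$ and $m=\tfrac43\pi R^3(\rho_p-\rho_f)$), so the self-interaction term is exactly $(\mathbb{A}(\xi_i))^{-1}\kappa g$. It then remains to identify the pairwise sum with $\tfrac{6\pi r_0}{N}\sum_{j\neq i}\Phi(x_+^i-x_+^j)\kappa g$ modulo $O(d_{\min})$, and, separately, to establish the a priori bound $\max_{i,\alpha}|U_\alpha^i|\lesssim 1$ so that the $O(\sqrt{R})$- and $O(R^{3/2})$-type errors that appear are genuinely $O(d_{\min})$.

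For the pairwise sum, for each $j\neq i$ and $\alpha\in\{1,2\}$ I would invoke the decomposition \eqref{def_formule}, $U[U_1^{j,\infty},U_2^{j,\infty}](x_\alpha^i)=-\Phi(x_+^j-x_\alpha^i)\bigl(F_1^{j,\infty}+F_2^{j,\infty}\bigr)+\mathcal{R}[U_1^{j,\infty},U_2^{j,\infty}](x_\alpha^i)$, together with the decay bound \eqref{decay_rate_R} for $\mathcal{R}$. By \eqref{forces_formula} the total force is $F_1^{j,\infty}+F_2^{j,\infty}=-6\pi R\,(A_1(\xi_j)+A_2(\xi_j))(U_1^{j,\infty}+U_2^{j,\infty})$; substituting the second identity of Proposition \ref{velocity_mr_bis} and using the uniform bounds on $A_1,A_2$ (Remark \ref{reg_A_1A_2}) gives $F_1^{j,\infty}+F_2^{j,\infty}=-12\pi R\,\kappa g+O(R^{3/2})\max_{i,\alpha}|U_\alpha^i|$. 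Since $R=\tfrac{r_0}{2N}$ the prefactor is $12\pi R=\tfrac{6\pi r_0}{N}$; and since $|x_\alpha^i-x_+^i|=R|\xi_i|\lesssim R$ by \eqref{hyp_bound} while $|x_+^j-x_\alpha^i|\geq\tfrac12 d_{ij}$ by Lemma \ref{maj1}, with $R\ll d_{\min}$, I can replace $\Phi(x_+^j-x_\alpha^i)$ by $\Phi(x_+^j-x_+^i)$ at the cost of $\lesssim R/d_{ij}^2$ per term. Averaging over $\alpha$ with weight $\tfrac12$ then collapses the leading part of the sum to $\tfrac{6\pi r_0}{N}\sum_{j\neq i}\Phi(x_+^j-x_+^i)\kappa g=\tfrac{6\pi r_0}{N}\sum_{j\neq i}\Phi(x_+^i-x_+^j)\kappa g$, since $\Phi$ is even.

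Four error types remain: (i) from $\mathcal{R}$, a contribution $\lesssim R^2\max_{i,\alpha}|U_\alpha^i|\sum_{j\neq i}d_{ij}^{-2}$ (using $|U_\alpha^{j,\infty}|\lesssim\max_{i,\alpha}|U_\alpha^i|$ from Lemma \ref{conv_serie}); (ii) from the argument shift in $\Phi$, $\lesssim \tfrac{r_0 R}{N}\sum_{j\neq i}d_{ij}^{-2}$; (iii) from the $O(R^{3/2})$ in the total force, $\lesssim R^{3/2}\max_{i,\alpha}|U_\alpha^i|\sum_{j\neq i}d_{ij}^{-1}$; and (iv) the pre-existing $O(\sqrt{R})\max_{i,\alpha}|U_\alpha^i|$. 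Lemma \ref{JO} with $k=1,2$ makes $\tfrac1N\sum_{j\neq i}d_{ij}^{-1}$ and $\tfrac1N\sum_{j\neq i}d_{ij}^{-2}$ bounded under \eqref{hyp_bound} and \eqref{d_min+}; combined with $R=\tfrac{r_0}{2N}$ and the estimates recorded in Remark \ref{rem1} ($d_{\min}\gtrsim N^{-1/2}$, $R\ll d_{\min}$), this gives (i),(ii)$=o(d_{\min})$ and (iii),(iv)$\lesssim d_{\min}\max_{i,\alpha}|U_\alpha^i|$. For the a priori bound, I would feed the first identities of Propositions \ref{velocity_mr_bis} and \ref{velocity_mr} into one another: by the decay \eqref{decay_rate} and Lemma \ref{maj1}, the non-local sums there are bounded by $\bigl(C r_0\|\rho\|_{L^\infty(0,T;L^\infty(\mathbb{R}^3))}^{1/3}+o(1)\bigr)\max_{i,\alpha}|U_\alpha^i|$ via Lemma \ref{JO}, so the smallness hypothesis on $r_0\|\rho\|^{1/3}_{L^\infty}$ and $N$ large allow this term to be absorbed, leaving $\max_{i,\alpha}|U_\alpha^i|\lesssim 1$. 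The only genuine difficulty is this bookkeeping: terms (iii) and (iv), both of size $\sim N^{-1/2}$, are the binding ones, and their being $O(d_{\min})$ is precisely assumption \eqref{d_min+}, i.e. $d_{\min}\gtrsim N^{-1/2}$; terms (i) and (ii) are of strictly smaller order $O(N^{-1})$.
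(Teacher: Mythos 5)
Your argument is correct and follows essentially the same route as the paper's proof: start from Proposition \ref{velocity_mr_bis}, use the decomposition \eqref{def_formule} with the force asymptotics $F_1^{j,\infty}+F_2^{j,\infty}=-2mg+O(R\sqrt{R})$ (equivalently $-12\pi R\kappa g$, with $mg=6\pi R\kappa g$ and $12\pi R=6\pi r_0/N$), shift the evaluation points $x_\alpha^i$ to $x_+^i$ using $|x_-^i|\sim R\ll d_{\min}$, control all sums via Lemma \ref{JO} under \eqref{hyp_bound} and \eqref{d_min+}, and close with the a priori bound $\max_{i,\alpha}|U_\alpha^i|\lesssim 1$ obtained by absorption. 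Your itemized error bookkeeping, including the observation that the binding $O(\sqrt{R})\sim N^{-1/2}$ terms are exactly what assumption \eqref{d_min+} (via Remark \ref{rem1}) converts into $O(d_{\min})$, matches the paper's conclusion.
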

\begin{proof}
First of all, from Propositions \ref{velocity_mr_bis} and \ref{velocity_mr} we can show that the velocities $U_\alpha^i$ are uniformly bounded with respect to $N$ for all $ 1 \leq i \leq N$ and $\alpha=1,2$. Indeed, using formula \eqref{reflection} together with the decay properties \eqref{decay_rate} and Propositions \ref{velocity_mr_bis} and \ref{velocity_mr} we have : 
\begin{align*}
\underset{1 \leq i \leq N}{\underset{\alpha =1,2}{\max}} |U_\alpha^i| & \leq \underset{1\leq i \leq N}{\max}\,(|U_+^i|+|U_-^i|),\\
%&\lesssim \underset{1 \leq i \leq N}{\max}  \left |(A_1(\xi_i)+A_2(\xi_i))^{-1} \frac{m}{6 \pi R} g \right| +   \underset{1\leq i \leq N}{\max}\, \max( |U_1^{i,\infty}|, |U_2^{i,\infty}|)+ O(R) \underset{1 \leq i \leq N}{\underset{\alpha =1,2}{\max}} |U_\alpha^i|,\\
&\lesssim 1 + \underset{1\leq i \leq N}{\max}\, ( |U_+^{i,\infty}|+ |U_-^{i,\infty}|)+ O(\sqrt{R}) \underset{1 \leq i \leq N}{\underset{\alpha =1,2}{\max}} |U_\alpha^i|,\\
&\lesssim 1 + O(\sqrt{R)} \underset{1 \leq i \leq N}{\underset{\alpha =1,2}{\max}} |U_\alpha^i|.
\end{align*}
This allows us to bound the terms $\underset{1 \leq i \leq N}{\underset{\alpha =1,2}{\max}} |U_\alpha^i|$ by a constant independent of $N$ in the estimates of Propositions \ref{velocity_mr_bis} and \ref{velocity_mr}.
From Proposition \ref{velocity_mr} we have 
\begin{align*}
U_+^i &=  (A_1(\xi_i)+A_2(\xi_i))^{-1} \frac{m}{6 \pi R} g +\frac{1}{2} \underset{j \neq i}{\sum}\left ( U[U_1^{j,\infty},U_2^{j,\infty}](x_1^i)+U[U_1^{j,\infty}+U_2^{j,\infty}](x_2^i) \right ) + O(\sqrt{R}).
\end{align*}
We recall that using \eqref{def_formule} we have
\begin{align*}
U[U_1^{j,\infty},U_2^{j,\infty}](x_1^i)+U[U_1^{j,\infty},U_2^{j,\infty}](x_2^i)&= -(\Phi(x_+^j-x_1^i)+\Phi(x_+^j-x_2^i))(F_1^{j,\infty} +F_2^{j,\infty})\\
&+\mathcal{R}[U_1^{j,\infty},U_2^{j,\infty}](x_1^i)+\mathcal{R}[U_1^{j,\infty},U_2^{j,\infty}](x_2^i)
\end{align*}
recall that $ F_+^{j,\infty}  = - mg + O(R\sqrt{R}) $, see proof of Propositions \ref{velocity_mr} and \ref{velocity_mr_bis}. Hence, we replace $F_1^j+F_2^j=2 F_+^{j,\infty}$ by $-2mg$ with $6\pi R\kappa g = m g$ and bound the error terms using the decay properties of the Oseen tensor $\Phi$ and the field $\mathcal{R}$, see \eqref{decay_rate_R}
$$
 \underset{j \neq i}{\sum}\left |\Phi(x_+^j-x_1^i)+\Phi(x_+^j-x_2^i) \right|O(R \sqrt{R}) +\left|\mathcal{R}[U_1^{j,\infty},U_2^{j,\infty}](x_1^i)+\mathcal{R}[U_1^{j,\infty},U_2^{j,\infty}](x_2^i) \right| 
 \leq C \sqrt{R}+ C R.
$$
Now it remains to replace both terms $\Phi(x_+^j-x_1^i)$, $\Phi(x_+^j-x_2^i)$ by $\Phi(x_+^i-x_+^j)$. Direct computations show that for all $1 \leq \alpha \leq 2 $ we have 
$
| x_+^i -x_\alpha^i| =|x_-^i|,
$
which yields for all  $1 \leq \alpha,\leq 2 $:
$$
\underset{j \neq i }{\sum}|\Phi(x_+^j-x_\alpha^i) - \Phi(x_+^j -x_+^i) |6\pi R|\kappa g|\lesssim  \underset{j \neq i}{\sum} \frac{|x_-^i|R}{|x_+^1-x_+^j|^2}\lesssim |x_-^i|, 
$$
which is comparable to $R$ according to assumption \eqref{hyp_bound}. Gathering all the estimates, the error term is of order $\sqrt{R}$ which is of order $d_{\min}$ according to assumption \eqref{d_min+} and Remark \ref{rem1}.

%where we used the following formulas
%\begin{eqnarray*}
%x_1^1 -x_1^j -x_+^1+x_+^j &=& x_-^1-x_-^j, \\
%x_1^1 -x_2^j -x_+^1+x_+^j &=& x_-^1+x_-^j ,\\
%x_2^1 -x_1^j -x_+^1+x_+^j &=& -x_-^1-x_-^j ,\\
%x_2^1 -x_2^j -x_+^1+x_+^j &=& -x_-^1+x_-^j,
%\end{eqnarray*}

\end{proof}
\subsection{Estimates for $\dot{x}_-^i$}
Analogously, Propositions  \ref{velocity_mr_bis} and \ref{velocity_mr} yields the following result:
\begin{crllr}\label{velocity_-}
For all $1 \leq i \neq N$ we have: 
$$
\frac{U_1^i-U_2^i}{2}=\left(  \frac{6\pi r_0}{N} \underset{j\neq i}{\sum}\nabla \Phi(x_+^i-x_+^j)\kappa g \right)\cdot x_-^i   + O \left( {|x_-^i|}d_{\min} \right).
$$
\end{crllr}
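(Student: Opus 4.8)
The plan is to run the argument of Corollary \ref{velocity_+}, but to push the Taylor expansion of the Oseen tensor one order further: in the difference $U_1^i-U_2^i$ the leading Stokeslet contributions cancel, so the first surviving term is a gradient of $\Phi$. Starting from the second identity of Proposition \ref{velocity_mr},
$$
U_1^i-U_2^i=\sum_{j\neq i}\Bigl(U[U_1^{j,\infty},U_2^{j,\infty}](x_1^i)-U[U_1^{j,\infty},U_2^{j,\infty}](x_2^i)\Bigr)+O\bigl(\sqrt R\,|x_-^i|\bigr)\,\underset{\alpha=1,2}{\underset{1\le k\le N}{\max}}\,|U_\alpha^k|,
$$
and using $\underset{\alpha,k}{\max}\,|U_\alpha^k|\lesssim 1$ (established in the proof of Corollary \ref{velocity_+}), it suffices to expand each term of the sum. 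By \eqref{hyp_bound} one has $|x_-^i|\le M_1R$, while $R=\tfrac{r_0}{2N}$ and $d_{\min}\gtrsim N^{-1/2}$ by \eqref{d_min+} and Remark \ref{rem1}, so $|x_-^i|\ll d_{\min}$; hence, as in Lemma \ref{maj1}, the whole segment $[x_1^i,x_2^i]$ lies at distance $\ge\tfrac12 d_{ij}>4M_1R$ from $x_+^j$ for every $j\neq i$ and $N$ large, so \eqref{def_formule} applies at $x_1^i$ and $x_2^i$. Moreover, from the proofs of Propositions \ref{velocity_mr} and \ref{velocity_mr_bis}, $F_1^{j,\infty}+F_2^{j,\infty}=2F_+^{j,\infty}=-2mg+O(R\sqrt R)$ with $6\pi R\kappa g=mg$, in particular $|F_1^{j,\infty}+F_2^{j,\infty}|\lesssim R$.

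Writing $x_\alpha^i=x_+^i\pm x_-^i$ and setting $a_{ij}:=x_+^i-x_+^j$, the Stokeslet difference is $\Phi(-a_{ij}-x_-^i)-\Phi(-a_{ij}+x_-^i)$; since $\Phi$ is even, the even-order terms of a second order Taylor expansion cancel and
$$
\Phi(x_+^j-x_1^i)-\Phi(x_+^j-x_2^i)=2\,\nabla\Phi(a_{ij})\cdot x_-^i+O\!\left(\frac{|x_-^i|^3}{d_{ij}^4}\right),
$$
the cubic remainder being controlled by $\sup_{[x_1^i,x_2^i]}|\nabla^3\Phi|\lesssim d_{ij}^{-4}$ (legitimate since $|x_-^i|\ll d_{ij}$). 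Plugging this together with $F_1^{j,\infty}+F_2^{j,\infty}=-2mg+O(R\sqrt R)$ into the expansion above, and using $2mg=12\pi R\kappa g=\tfrac{6\pi r_0}{N}\kappa g$, the main term of $\tfrac12(U_1^i-U_2^i)$ is precisely
$$
\frac{6\pi r_0}{N}\sum_{j\neq i}\nabla\Phi(x_+^i-x_+^j)\kappa g\cdot x_-^i,
$$
as claimed.

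It then remains to check that every remainder is $O(|x_-^i|d_{\min})$. The term $O(\sqrt R\,|x_-^i|)$ from Proposition \ref{velocity_mr} is of this size since $\sqrt R\lesssim d_{\min}$. Pairing the $O(R\sqrt R)$ error of $F_+^{j,\infty}$ with $2\nabla\Phi(a_{ij})\cdot x_-^i$ gives, after summation, $\lesssim R\sqrt R\,|x_-^i|\sum_{j\neq i}d_{ij}^{-2}\lesssim\sqrt R\,|x_-^i|\lesssim|x_-^i|d_{\min}$, using $\tfrac1N\sum_{j\neq i}d_{ij}^{-2}\lesssim 1$ (Lemma \ref{JO} and \eqref{d_min+}) and $RN=\tfrac{r_0}{2}$. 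The cubic Taylor remainder, multiplied by $|F_1^{j,\infty}+F_2^{j,\infty}|\lesssim R$ and summed, is $\lesssim|x_-^i|^3 R\sum_{j\neq i}d_{ij}^{-4}\lesssim|x_-^i|^3\tfrac1{d_{\min}}\bigl(\tfrac{W_\infty^3}{d_{\min}^3}+|\log W_\infty|\bigr)\lesssim|x_-^i|d_{\min}$, the last step using \eqref{Wasserstein_bound}, \eqref{d_min+} and $R\ll d_{\min}$. Finally, by the mean value inequality and the decay estimate \eqref{decay_rate_R} for $\nabla\mathcal R$, together with $|x_1^i-x_2^i|=2|x_-^i|$, $|U_\alpha^{j,\infty}|\lesssim 1$ and $|x-x_+^j|\gtrsim d_{ij}$ along the segment, the $\mathcal R$-difference is $\lesssim|x_-^i|R^2 d_{ij}^{-3}$, whose sum is $\lesssim|x_-^i|R\bigl(\tfrac{W_\infty^3}{d_{\min}^3}+|\log W_\infty|\bigr)\lesssim|x_-^i|d_{\min}$ by the same inputs. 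Collecting everything proves the corollary.

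The only genuinely new algebraic ingredient compared with Corollary \ref{velocity_+} is the parity argument for $\Phi$ that produces the factor $2\nabla\Phi(a_{ij})\cdot x_-^i$, together with the matching $2mg=\tfrac{6\pi r_0}{N}\kappa g$. The main obstacle is the bookkeeping in the error estimates: one must track the competing scales $R\sim N^{-1}$, $|x_-^i|\sim R$, $d_{\min}$ and $W_\infty$, feed them into the singular-sum bounds of Lemma \ref{JO} at exponents $2$, $3$ and $4$ (with the logarithmic loss at exponent $3$), and always close the estimate through \eqref{Wasserstein_bound}, \eqref{d_min+} and $R\ll d_{\min}$.
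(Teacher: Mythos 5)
Your proposal is correct and follows essentially the same route as the paper: Proposition \ref{velocity_mr} plus the uniform velocity bound, the representation \eqref{def_formule}, a Taylor expansion of $\Phi$ producing the $\nabla\Phi\cdot x_-^i$ term with $2mg=\frac{6\pi r_0}{N}\kappa g$, and the singular-sum bounds of Lemma \ref{JO} combined with \eqref{Wasserstein_bound} and \eqref{d_min+} to close the errors at order $|x_-^i|d_{\min}$. The only (harmless) deviations are that you expand symmetrically about $x_+^i$ to third order, merging the paper's errors $\mathcal{E}^1_{i,j}$ and $\mathcal{E}^2_{i,j}$ into one cubic remainder at the cost of a controlled logarithmic term, and that you bound the difference of the correctors $\mathcal{R}$ via \eqref{decay_rate_R}, a term the paper's displayed computation leaves implicit.
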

\begin{proof}
The first formula of Proposition \ref{velocity_mr} together with the uniform bound on the velocities $(U_+^i, U_-^i)$, see proof of Corollary \ref{velocity_+}, yields: 
\begin{align*}
U_1^i-U_2^i= \underset{j\neq i}{\sum} U[U_1^{j,\infty}, U_2^{j,\infty}](x_1^i) - U[U_1^{j,\infty}, U_2^{j,\infty}](x_2^i)+ O(\sqrt{R}|x_-^i|).
\end{align*}
We want to estimate the first term, we have using \eqref{def_formule} 
\begin{align*}
& U[U_1^{j,\infty}, U_2^{j,\infty}](x_1^i) - U[U_1^{j,\infty}, U_2^{j,\infty}](x_2^i)\\
& =- \Phi(x_1^i-x_+^j)(F_1^{j,\infty}+ F_2^{j,\infty}) + \Phi(x_2^i-x_+^j) (F_1^{j,\infty}+ F_2^{j,\infty}) ,\\
& = -4 [ \nabla \Phi(x_2^i-x_+^j)\cdot x_-^i] F_+^{j,\infty}\\
&-2 \int_0^1 \underset{|\beta|=2}{\sum} (x_-^i)^\beta \cdot D^\beta \Phi(x_2^i-x_+^j+tx_-^i)F_+^{j,\infty} dt\\
&=- 4 [\nabla \Phi(x_+^i-x_+^j) \cdot x_-^i ]F_+^{j,\infty}  + \mathcal{E}^1_{i,j}+\mathcal{E}^2_{i,j}. \\
\end{align*}
Now recall that, from the proof of Proposition \ref{velocity_mr_bis} we have:
$$
F_+^{j,\infty}=- mg + O(R^2).
$$
Thus, we get the following formula:
\begin{align*}
U[U_1^{j,\infty}, U_2^{j,\infty}](x_1^i) - U[U_1^{j,\infty}, U_2^{j,\infty}](x_2^i)&=  2 [\nabla \Phi(x_+^i-x_+^j) \cdot x_-^i ]mg + \mathcal{E}^1_{i,j}+\mathcal{E}^2_{i,j}+\mathcal{E}^3_{j},
\end{align*}
with
$$
\mathcal{E}^3_{j}=- 4 [\nabla \Phi(x_+^i-x_+^j) \cdot x_-^i ](F_+^{j,\infty}+ mg).
$$
We recall that $m g =6\pi R \kappa g = \frac{1}{2} \frac{6 \pi r_0}{N} g$. Finally we obtain: 
\begin{align*}
\frac{U_1^i-U_2^i}{2}&= \frac{6\pi r_0}{N} \underset{j\neq i}{\sum} [\nabla \Phi(x_+^i-x_+^j) \cdot x_-^i ]\kappa g  +  \frac{1}{2}\underset{j\neq i}{\sum} \mathcal{E}^1_{i,j}+\mathcal{E}^2_{i,j}+\mathcal{E}^3_{j} + O(\sqrt{R}|x_-^i|).
\end{align*}
It remains to bound the error terms. We begin by the first one: 
\begin{align*}
|\mathcal{E}_{i,j}^1 |& \leq 2 \left ( \underset{y \in[x_1^i,x_2^i]}{\sup} \left (| \nabla^2 \Phi(x_+^j-y)|\right) \right) |x_-^i|^2 (|F_+^{j,\infty}|).
\end{align*}
We emphasize that  for all $y\in [x_1^i,x_2^i]$:
$$
|y-x_+^j|\geq |x_1^i- x_+^j|-|x_1^i- y|
\geq |x_1^i-x_+^j| - 2|x_-^i|
\geq \frac{1}{4}|x_+^i-x_+^j|,
$$
where we used the fact that $$|x_-^i| \leq \frac{C}{R} \leq \frac{1}{8}d_{\min} \leq \frac{1}{8} |x_+^i-x_+^j|,$$
and 
$$
|x_1^i-x_+^j| \geq \frac{1}{2} |x_+^i-x_+^j|,
$$
This yields : 
$$
\underset{ j\neq i}{\sum} |\mathcal{E}_{i,j}^1|  \leq C \underset{ j\neq i}{\sum} \frac{1}{d_{ij}^3} |x_-^i|^2 R \kappa |g| \leq C{|x_-^i|}\frac{R}{d_{\min}} \left( \underset{ j\neq i}{\sum} \frac{R}{d_{ij}^2}\right) \leq C{|x_-^i|}\frac{R}{d_{\min}}\leq C d_{\min}|x_-^i|.
$$
%\begin{align*}
%\underset{ j\neq i}{\sum} |\mathcal{E}_{i,j}^1|  &\leq C \underset{ j\neq i}{\sum} \frac{1}{d_{ij}^3} |x_-^i|^2 R \kappa |g|,\\
%&\leq C{|x_-^i|}\frac{R}{d_{\min}} \left( \underset{ j\neq i}{\sum} \frac{R}{d_{ij}^2}\right),\\
%& \leq C{|x_-^i|}\frac{R}{d_{\min}}.
%\end{align*}
For the second error term we have:
\begin{equation*}
\mathcal{E}_{i,j}^2 =- 2 [ \nabla \Phi(x_2^i-x_+^j)- \nabla \Phi(x_+^i-x_+^j)]\cdot x_-^i \cdot F_+^{j,\infty} ,
\end{equation*}
where
\begin{align*}
| \nabla \Phi(x_2^i-x_+^j)- \nabla \Phi(x_+^i-x_+^j)|\leq C \left (\frac{1}{|x_2^i-x_+^j|^3}+\frac{1}{|x_+^i-x_+^j|^3}  \right) | x_-^i|.
\end{align*}
Since $|x_-^j| \sim R \sim |x_-^i|$ the second error term is bounded by: 
\begin{align*}
\underset{ j\neq i}{\sum} |\mathcal{E}_{i,j}^2| &\leq C 
\underset{ j\neq i}{\sum} \frac{1}{d_{ij}^3} |x_-^i|^2 R \kappa |g|,
\end{align*}
which yields the same estimate as for the first error term.
Finally, the last error term gives: 
$$
\underset{ j\neq i}{\sum} |\mathcal{E}_{i,j}^3|  \leq 2 | \nabla \Phi(x_+^i-x_+^j)| \,| x_-^i |\,|F_+^{j,\infty}+ mg| \leq C R^2.
$$
where we used the fact that 
$
F_+^{j,\infty} = -mg + O(R^2)
$
and $|x_-^i| \sim R$.
\end{proof}
\section{Proof of Theorem \ref{thm}}
In order to derive the transport-Stokes equation satisfied at the limit, the idea is to show that the discrete density $\mu^N$ satisfies weakly a transport equation. We introduce the following notations.  Given a density $\rho$, we define the operator $\mathcal{K} \rho$ as: 
$$
\mathcal{K}\rho(x) := 6\pi r_0 \int_{\mathbb{R}^3} \Phi(x-y) \kappa g \, \rho(dy).
$$
The operator is well defined and is Lipschitz in the case where $\rho \in L^1 \cap L^\infty$. Moreover, note that $\mathcal{K} \rho$ satisfies the Stokes equation 
$$ - \Delta \mathcal{K}(\rho) + \nabla p = 6\pi r_0 \kappa g \rho, $$ on $\mathbb{R}^3$. Analogously, we define $\mathcal{K}^N \rho ^N$ as: 
\begin{equation}\label{def_K^N}
\mathcal{K}^N\rho^N(x):= 6\pi r_0 \int_{\mathbb{R}^3} \chi \Phi(x-y) \kappa g \,\rho^N(dy),
\end{equation}
where $\chi \Phi (\cdot)= \chi\left (\frac{\cdot}{d_{\min}} \right) \Phi(\cdot)$, $\chi$ is a truncation function such that $\chi=0 $ on $B(0,1/4)$ and $\chi=1$ on ${}^c B(0,1/2)$.
\subsection{Derivation of the transport-Stokes equation}
The transport equation satisfied by $\mu^N$ is obtained directly using the ODE system derived for each couple $(x_+^i$, $\xi_i)$. We recall that:
\begin{eqnarray*}
U_+^i & = & (\mathbb{A}(\xi_i))^{-1} \kappa g +  \mathcal{K}^N \rho^N(x_+^i) +O(d_{\min}), \\
\frac{U_-^i}{R}& =&  \nabla \mathcal{K}^N \rho^N(x_+^i) \cdot \xi_i  + O\left(d_{\min}\right).
\end{eqnarray*}
Following the idea of \cite[Section 5.2]{Mecherbet} and using the fact that the centers $x_+^i$ do not collide thanks to our assumptions, one can show that we can construct two divergence-free velocity fields $E^N$ and $\tilde{E}^N$ such that  : 
\begin{eqnarray}\label{EDO_N}
U_+^i & = & (\mathbb{A}(\xi_i))^{-1} \kappa g +  \mathcal{K}^N \rho^N(x_+^i) + E^N(x_+^i),  \\
\frac{U_-^i}{R}& =&  \nabla \mathcal{K}^N \rho^N(x_+^i) \cdot \xi_i  + \tilde{E}^N\left(x_+^i\right), \nonumber
\end{eqnarray}
and there exists a positive constant independent of $N$ such that
\begin{eqnarray}\label{bound_err_uniform}
\|E^N\|_\infty = O(d_{\min}),& \|\tilde{E}^N\|_\infty= O\left( d_{\min} \right),&\|\nabla E^N\|_\infty +\|\nabla \tilde{E}^N\|_\infty < C.
 \end{eqnarray}
This construction yields the following result
\begin{prpstn}\label{VS_N}
$\mu^N$ satisfies weakly the transport equation:
\begin{equation}\label{ff_VS_N}
\partial_t \mu^N + \div_x [ (\mathbb{A}(\xi))^{-1} \kappa g \mu^N +  \mathcal{K}^N \rho^N(x) \mu^N+E^N \mu^N]+\div_\xi [ \nabla \mathcal{K}^N \rho^N(x) \cdot \xi \mu^N+ \tilde{E}^N(x) \mu^N] =0.
\end{equation}
\end{prpstn}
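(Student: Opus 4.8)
\textbf{Proof proposal for Proposition \ref{VS_N}.}

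The plan is to test the empirical measure against an arbitrary $C^1$ function and to convert the particle ODE system \eqref{EDO_N} into the weak formulation by the chain rule, following \cite[Section 5.2]{Mecherbet}. Fix $\psi \in C^1_c([0,T)\times\mathbb{R}^3\times\mathbb{R}^3)$ and set
\[
g_\psi(t):=\int_{\mathbb{R}^3}\int_{\mathbb{R}^3}\psi(t,x,\xi)\,\mu^N(t,dx,d\xi)=\frac1N\sum_{i=1}^N\psi\big(t,x_+^i(t),\xi_i(t)\big).
\]
Each trajectory $t\mapsto(x_+^i(t),\xi_i(t))$ is Lipschitz in time: by definition $\dot\xi_i=\dot x_-^i/R=U_-^i/R$, and the velocities $(U_+^i,U_-^i/R)$ are given by the right-hand side of \eqref{EDO_N}, which is uniformly bounded (cf.\ the proof of Corollary \ref{velocity_+}, where the $U_\alpha^i$ are shown bounded independently of $N$, together with \eqref{bound_err_uniform}). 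Since $\psi\in C^1$, the map $g_\psi$ is Lipschitz, hence for a.e.\ $t$,
\[
g_\psi'(t)=\frac1N\sum_{i=1}^N\Big[\partial_t\psi+\nabla_x\psi\cdot\dot x_+^i+\nabla_\xi\psi\cdot\dot\xi_i\Big]\big(t,x_+^i(t),\xi_i(t)\big).
\]

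Next I would substitute $\dot x_+^i=U_+^i$ and $\dot\xi_i=U_-^i/R$ from \eqref{EDO_N}. One first records that at the particle positions the truncation built into \eqref{def_K^N} is inactive: for $j\ne i$ we have $|x_+^i-x_+^j|\ge d_{\min}$, so $\chi(\,\cdot/d_{\min})\equiv 1$ on those arguments, while the $j=i$ term vanishes by the convention $\chi\Phi(0)=0$; consequently $\mathcal{K}^N\rho^N(x_+^i)$ and $\nabla\mathcal{K}^N\rho^N(x_+^i)$ agree with $\frac{6\pi r_0}{N}\sum_{j\ne i}\Phi(x_+^i-x_+^j)\kappa g$ and its gradient, i.e.\ with the discrete sums appearing in Corollaries \ref{velocity_+} and \ref{velocity_-}. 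Writing $b^N(x,\xi):=(\mathbb{A}(\xi))^{-1}\kappa g+\mathcal{K}^N\rho^N(x)+E^N(x)$ and $c^N(x,\xi):=\nabla\mathcal{K}^N\rho^N(x)\cdot\xi+\tilde{E}^N(x)$, the identity above becomes $g_\psi'(t)=\int\int[\partial_t\psi+b^N\cdot\nabla_x\psi+c^N\cdot\nabla_\xi\psi]\,\mu^N(t,dx,d\xi)$. Integrating over $t\in(0,T)$ and using $\psi(T,\cdot,\cdot)=0$ yields
\[
-\int_{\mathbb{R}^3}\int_{\mathbb{R}^3}\psi(0,\cdot,\cdot)\,\mu^N(0,dx,d\xi)=\int_0^T\!\!\int_{\mathbb{R}^3}\int_{\mathbb{R}^3}\big[\partial_t\psi+b^N\cdot\nabla_x\psi+c^N\cdot\nabla_\xi\psi\big]\,\mu^N(t,dx,d\xi)\,dt,
\]
which (restricting if one prefers to $\psi$ also vanishing at $t=0$) is precisely the weak form of \eqref{ff_VS_N}; the bounds \eqref{bound_err_uniform}, Remark \ref{reg_A_1A_2} and the decay of $\Phi$ ensure $b^N,c^N$ are bounded on $\supp\mu^N$ (where $|\xi|\le M_1$), so the formulation is meaningful.

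I do not expect a genuine obstacle in the proof of the proposition itself: it is the verbatim translation of \eqref{EDO_N} under the chain rule. The two points requiring care are (i) enough time-regularity of the trajectories to differentiate $g_\psi$, which is secured by the uniform boundedness of the velocities and the absence of collisions under the standing assumptions, and (ii) the prior construction of the globally defined, divergence-free, uniformly Lipschitz extensions $E^N,\tilde{E}^N$ satisfying \eqref{bound_err_uniform}, into which the $O(d_{\min})$ remainders of Corollaries \ref{velocity_+}--\ref{velocity_-} are absorbed — this is the step carrying the real work and is imported from \cite[Section 5.2]{Mecherbet}.
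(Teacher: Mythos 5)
Your proposal is correct and coincides with what the paper intends: the paper states Proposition \ref{VS_N} without proof, as an immediate consequence of the construction of $E^N,\tilde{E}^N$ in \eqref{EDO_N}--\eqref{bound_err_uniform}, and your argument (testing the empirical measure against $\psi\in C^1_c$, applying the chain rule along the Lipschitz trajectories, substituting \eqref{EDO_N}, and integrating in time) is precisely the standard derivation being left implicit. Your remarks that the truncation in $\mathcal{K}^N\rho^N$ is inactive at the particle positions and that the genuine work lies in the prior construction of $E^N,\tilde{E}^N$ imported from \cite[Section 5.2]{Mecherbet} are both accurate.
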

We can prove now Theorem \ref{thm}.
\subsection{proof of Theorem \ref{thm} }
The proof is a corollary of Proposition \ref{VS_N}. Indeed, we want to show that for all $\psi \in \mathcal{C}_c^\infty(\mathbb{R}^3)$ we have:
\begin{multline}
\int_0^T\int_{\mathbb{R}^3\times \mathbb{R}^3} \Big\{\partial_t \psi(t,x,\xi)+ \nabla_x \psi(t,x,\xi) \cdot [(\mathbb{A}(\xi))^{-1} \kappa g +  \mathcal{K} \rho(x)))]\\
+\nabla_\xi \psi(t,x,\xi) \cdot [ \nabla \mathcal{K} \rho(x) \cdot \xi ] \Big\}\mu(t,dx,d\xi) dt=0.
\end{multline}
which is obtained directly by passing through the limit in each term of formula \eqref{ff_VS_N}. Indeed we recall that we have the following estimates:
\begin{eqnarray*}
\|\mathcal{K}^N\rho^N - \mathcal{K} \rho \|_\infty &\lesssim& W_\infty, \\
\|\nabla \mathcal{K}^N\rho^N - \nabla \mathcal{K} \rho \|_\infty &\lesssim& W_\infty(1+ |\log W_\infty|) ,\\
\|E^N\|_\infty = O\left(d_{\min} \right)&, & \|\tilde{E}^N\|_\infty = O\left(d_{\min} \right).
\end{eqnarray*}
\section{Proof of theorem \ref{thm_bis} and \ref{thm2}}
This section is devoted to the proof of Theorem \ref{thm_bis} and \ref{thm2}. The Lipschitz-like estimates proved in Proposition \ref{conservation_distance} suggests a correlation between the vectors along the line of centers $\xi_i$ and the centers $x_+^i$. In this section, we show in particular that this correlation is well propagated in time. 
\subsection{Derivation of the transport-Stokes equation}
We assume now that there exists a lipschitz function $F_0$ such that 
$$
\xi_i(0)=F_0(x_+^i(0))\,,\:\: 1 \leq i \leq N,
$$
which means that $\mu^N_0= \rho^N_0 \otimes \delta_{F_0}$. In order to propagate this correlation we search for a function $F^N(t,\cdot)\in W^{1,\infty}(\mathbb{R}^3)$ such that for all $t\in[0,T]$ we have 
$$
\xi_i(t)=F^N(t,x_+^i(t))\,,\:\: 1 \leq i \leq N.
$$
According to the ODE satisfied by $\xi_i$, see \eqref{EDO_N}, $F^N$ must satisfy the following equation
\begin{equation*}
\left\{
\begin{array}{rcl}
\partial_t F^N+ \nabla F^N \cdot (\mathbb{A}(F^N)^{-1}\kappa g+  \mathcal{K}^N\rho^N+E^N) &=& \nabla \mathcal{K}^N\rho^N \cdot F^N + \tilde{E}^N,\\
F^N(0,\cdot) & =& F_0.
\end{array}
\right.
\end{equation*}
The following proposition shows the existence and uniqueness of $F^N$.
\begin{prpstn}\label{existence_FN}
There exists $T>$0 such that for all $N\in \mathbb{N}^*$, there exists a unique (local) solution $F^N \in L^\infty(0,T;W^{1,\infty}(\mathbb{R}^3))$ of the following equation
\begin{equation}\label{correlationN}
\left\{
\begin{array}{rcl}
\partial_t F^N+ \nabla F^N \cdot (\mathbb{A}(F^N)^{-1}\kappa g+  \mathcal{K}^N\rho^N+E^N) &=& \nabla \mathcal{K}^N\rho^N \cdot F^N + \tilde{E}^N, \\
F^N(0,\cdot) & =& F_0.
\end{array}
\right.
\end{equation}
\end{prpstn}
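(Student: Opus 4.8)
The plan is to write \eqref{correlationN} as a fixed point equation in a ball of $L^\infty(0,T;W^{1,\infty}(\mathbb{R}^3))$, the nonlinearity being carried only by the coefficient $\mathbb{A}(F^N)^{-1}$. Conceptually, a solution $F^N$ is a function whose graph $\{(x,F^N(t,x))\}$ is transported by the non-autonomous vector field on the right-hand side of \eqref{EDO_N} acting on the pair $(x,\xi)$; this field is globally Lipschitz in $(x,\xi)$ under the assumptions $\mathbb{A}^{-1}\in W^{2,\infty}$, \eqref{bound_err_uniform} and the (uniform in $N$) bounds on $\mathcal{K}^N\rho^N$ and $\nabla\mathcal{K}^N\rho^N$ established in the proof of Theorem \ref{thm}, and its $\xi$-component grows at most exponentially, so the underlying flow is globally defined on $[0,T]$. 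The fixed point encodes the requirement that this transported graph remains a graph over $x$ on a short time interval.

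Concretely, I would fix $R_0:=2\|F_0\|_{W^{1,\infty}}+1$ and, for $\bar T\le T$ to be chosen, the set $\mathcal{B}:=\{G\in L^\infty(0,\bar T;W^{1,\infty}(\mathbb{R}^3)):\ G(0,\cdot)=F_0,\ \|G\|_{L^\infty(0,\bar T;W^{1,\infty})}\le R_0\}$, which is complete for the $L^\infty(0,\bar T;L^\infty)$ distance, since a $W^{1,\infty}$-bounded sequence converging in $L^\infty$ keeps a $W^{1,\infty}$ limit with the same bound, by weak-$*$ lower semicontinuity of the norm. Given $G\in\mathcal{B}$, freeze the drift $b_G(t,x):=\mathbb{A}(G(t,x))^{-1}\kappa g+\mathcal{K}^N\rho^N(t,x)+E^N(t,x)$; by the above it is bounded with $\Lip_x b_G\lesssim 1+R_0$ uniformly in $N$, hence generates a bi-Lipschitz flow $X_G$ with $\Lip\big(X_G(s,t,\cdot)\big)\le e^{C|t-s|(1+R_0)}$. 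Solving the linear transport equation $\partial_t F+b_G\cdot\nabla F=\nabla\mathcal{K}^N\rho^N\cdot F+\tilde E^N$, $F(0,\cdot)=F_0$, by the method of characteristics --- along each characteristic $F$ obeys a linear matrix ODE with datum $F_0$, so is given explicitly through the flow and the resolvent of that ODE --- defines a map $\Gamma\colon G\mapsto F$.

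The next step is to show that $\Gamma$ maps $\mathcal{B}$ into itself for $\bar T$ small and is a contraction for the $L^\infty(0,\bar T;L^\infty)$ norm. The representation formula together with $\|\nabla\mathcal{K}^N\rho^N\|_\infty\lesssim 1$, $\|\tilde E^N\|_\infty=O(d_{\min})$ and the flow estimate give $\|F(t)\|_\infty\le e^{Ct}(\|F_0\|_\infty+Ct)$ and a Lipschitz bound of the form $\Lip_x F(t)\le e^{Ct(1+R_0)}(\|F_0\|_{W^{1,\infty}}+Ct\,(\cdots))$; with $R_0$ fixed as above, choosing $\bar T=\bar T(R_0)$ small makes both right-hand sides $\le R_0$, and $F(0,\cdot)=F_0$ holds because $F$ is Lipschitz in time. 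For the contraction, for $G_1,G_2\in\mathcal{B}$ one has $\|b_{G_1}-b_{G_2}\|_\infty\le\Lip(\mathbb{A}^{-1})\,|\kappa g|\,\|G_1-G_2\|_\infty$, hence by Gronwall $\|X_{G_1}-X_{G_2}\|_\infty\lesssim\bar T\,\|G_1-G_2\|_\infty$; feeding this into the representation formula yields $\|\Gamma G_1-\Gamma G_2\|_{L^\infty(0,\bar T;L^\infty)}\le C\,\bar T\,\|G_1-G_2\|_{L^\infty(0,\bar T;L^\infty)}$, which are precisely the stability estimates collected in Appendix C. For $\bar T$ small this is a strict contraction on the complete space $(\mathcal{B},\|\cdot\|_{L^\infty_tL^\infty_x})$; the unique fixed point $F^N$ solves \eqref{correlationN}, and uniqueness in $L^\infty(0,\bar T;W^{1,\infty})$ follows from the same stability estimate applied to two solutions.

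The step I expect to be the real obstacle is not the fixed point mechanism but securing all the constants above \emph{independently of $N$}. The bounds on $\mathcal{K}^N\rho^N$ and $\nabla\mathcal{K}^N\rho^N$ are uniform because these quantities remain close to the smooth limits $\mathcal{K}\rho$, $\nabla\mathcal{K}\rho$ (with an error $\lesssim W_\infty(1+|\log W_\infty|)$), but the Lipschitz dependence of the characteristic ODE's resolvent on the flow --- which enters the contraction estimate --- involves the \emph{second} derivatives of the discrete kernel, which a priori carry a $|\log W_\infty|$ loss coming from the non-integrable singularity of $\nabla^2\Phi$. Handling this term uniformly in $N$ is where the regularity $\rho\in W^{1,\infty}\cap W^{1,1}$, $\mathbb{A}^{-1}\in W^{2,\infty}$ and, crucially, assumption \eqref{d_min+_bis} (equivalently $d_{\min}\gtrsim N^{-1/3}$, cf.\ Remark \ref{rem1}, which keeps quantities such as $\tfrac{1}{N d_{\min}^{3}}$ bounded) are really needed; reconciling this loss with a short but $N$-independent existence time $\bar T$ is the delicate point.
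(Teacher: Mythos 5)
Your proposal follows essentially the same route as the paper: a fixed-point/Picard argument that freezes the nonlinear coefficient $\mathbb{A}(\cdot)^{-1}$, solves the resulting linear transport equation along the characteristic flow, and closes a self-map plus contraction estimate in $L^\infty_t L^\infty_x$ while propagating the $W^{1,\infty}$ bound, with $N$-independent constants supplied by the $W^{2,\infty}$ bound on $\mathcal{K}^N\rho^N$ (Proposition \ref{boundedness}, which uses $\rho\in W^{1,1}\cap W^{1,\infty}$, $\mathbb{A}^{-1}\in W^{2,\infty}$ and the control of $W_\infty^3/d_{\min}^3$ from \eqref{d_min+_bis}) together with \eqref{bound_err_uniform}. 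The only cosmetic difference is that the paper also freezes the zeroth-order term $\nabla\mathcal{K}^N\rho^N\cdot F$, so each iterate is an explicit integral of given data along characteristics, whereas you keep the unknown in that term and solve a linear ODE along each characteristic; both variants yield the same estimates and the same uniqueness argument.
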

\begin{proof}
The idea is to apply a fixed-point argument. 
%We set $E=\mathcal{C}(0,T;\mathcal{C}^{0,1}(\mathbb{R}^3))$ where 
%$$\|f \|_{\mathcal{C}(0,T;,\mathcal{C}^{0,1})}:= \underset{[0,T]}{\sup}\: \|f(t,\cdot)\|_{0.1}=\underset{[0,T]}{\sup}\:\Big( \|f(t,\cdot)\|_\infty+\Lip f(t,\cdot)\Big).
%$$
We define the mapping $ \mathcal{A}$ which associates to any $F \in L^\infty(0,T;W^{1,\infty}(\mathbb{R}^3)) $ the unique solution $\mathcal{A}(F)=\hat{F}$ to the transport equation
\begin{equation}\label{transport_pf}
\left\{
\begin{array}{rcl}
\partial_t \hat{F}+ \nabla \hat F \cdot (\mathbb{A}(F)^{-1}\kappa g+  \mathcal{K}^N\rho^N+E^N) &=& \nabla \mathcal{K}^N\rho^N \cdot F + \tilde{E}^N,\\
\hat{F}(0,\cdot) & =& F_0.
\end{array}
\right.
\end{equation}
We define $X^N$ as the characteristic flow satisfying :
$$
\partial_s X^N(s,t,x) = \mathbb{A}(F(s,X^N(s,t,x)))^{-1}\kappa g+  \mathcal{K}^N\rho^N(s,X^N(s,t,x))+E^N(s,X^N(s,t,x)).
$$
$$X^N(t,t,x)  = x.$$
%\end{array}
%\right.
%\end{equation*}
The Lipschitz property of $\mathbb{A}^{-1}$, $F$, $\mathcal{K}^N\rho^N$ and $E^N$ ensures the existence, uniqueness and regularity of such a flow, see Proposition \ref{boundedness} and formula \eqref{bound_err_uniform}. Moreover, direct estimates show that for all $0\leq s\leq t$:
\begin{multline}\label{lipschitz_X}
\|\nabla X^N(s,t,\cdot)\|_\infty \leq \\ \text{exp}(\left[|\kappa g| \| \nabla \mathbb{A}^{-1}\|_\infty \| F\|_{L^\infty(0,T;W^{1,\infty}(\mathbb{R}^3))} +\| \mathcal{K}^N\rho^N+E^N\|_{L^\infty(0,T;W^{1,\infty}(\mathbb{R}^3))} \right](t-s)).
\end{multline}
Hence, we can write
\begin{multline*}
\hat{F}(t,x)=F_0(X^N(0,t,x))+ \int_0^t \nabla \mathcal{K}^N\rho^N(s,X^N(s,t,x)) \cdot F(s,X^N(s,t,x))\\+\tilde{E}(s,X^N(s,t,x)) ds.
\end{multline*}
Direct computations yield 
\begin{multline*}
\|\mathcal{A}(F)\|_{L^\infty(0,T;L^\infty(\mathbb{R}^3))} \leq \|F_0\|_\infty+T \|\nabla \mathcal{K}^N\rho^N\|_{L^\infty(0,T; L^\infty(\mathbb{R}^3))} \|F\|_{L^\infty(0,T;L^\infty(\mathbb{R}^3))} +\\ \| \tilde{E}^N\|_{L^\infty(0,T;L^\infty(\mathbb{R}^3))} ,
\end{multline*}
and 
\begin{multline*}
\| \nabla \mathcal{A}({F})\|_{L^\infty(0,T; L^\infty(\mathbb{R}^3))} \leq  [ \|F_0\|_{1,\infty} + T \|\tilde{E}^N\|_{L^\infty(0,T;W^{1,\infty}(\mathbb{R}^3))} + \\T  \Big\{ \|\nabla \mathcal{K}^N \rho^N\|_{L^\infty(0,T;W^{1,\infty}(\mathbb{R}^3))} \Big \}\|F\|_{L^\infty(0,T; W^{1,\infty}(\mathbb{R}^3))}] \| \nabla X^N(\cdot,t,\cdot)\|_{L^\infty(0,T;L^\infty(\mathbb{R}^3))},
\end{multline*}
Gathering all the estimates and using Proposition \ref{boundedness} and the uniform bounds \eqref{bound_err_uniform}, there exists some constants independent of $N$ such that: 
\begin{equation}\label{bound}
\|\mathcal{A}(F)\|_{L^\infty(0,T;W^{1,\infty}(\mathbb{R}^3))} \leq (\|F_0\|_{W^{1,\infty}(\mathbb{R}^3)}+ T C_1 +TC_2 \|F\|_{L^\infty(0,T;W^{1,\infty}(\mathbb{R}^3))} ) e^{C_3 T}.
\end{equation}
On the other hand, given $F_1$, $F_2 \in L^\infty(0,T;W^{1,\infty}(\mathbb{R}^3))$ we set $X_i$ the associated characteristic flow and we have
\begin{multline*}
\|\mathcal{A}(F_1)(t,\cdot)- \mathcal{A}(F_2)(t,\cdot)\|_\infty  \leq \\ \Big (\|\nabla F_0\|_\infty+ t \|F_1\|_{L^\infty(0,T;W^{1,\infty}(\mathbb{R}^3))} \| \mathcal{K}^N\rho^N \|_{L^\infty(0,T;W^{2,\infty}(\mathbb{R}^3))}+  t \|\tilde{E}^N\|_{L^\infty(0,T;W^{1,\infty}(\mathbb{R}^3))}  \Big)\\
\times \|X_1(0,t,\cdot)-X_2(0,t,\cdot)\|_\infty \\
+t\|\nabla \mathcal{K}^N\rho^N \|_{L^\infty(0,T;L^\infty(\mathbb{R}^3))}  \|F_1-F_2\|_{L^\infty(0,T; L^\infty(\mathbb{R}^3))}.
\end{multline*}
The characteristic flows satisfies
\begin{multline*}
|X_1(s,t,x)-X_2(s,t,x)| \leq\|\nabla \mathbb{A}^{-1}\|_\infty  \int_s^t  \|F_1(\tau,\cdot)-F_2(\tau,\cdot)\|_\infty + \\ ( \|F_1\|_{L^\infty(0,T;L^\infty)} |\kappa g| + 2 \|\nabla \mathcal{K}^N \rho^N+\nabla E^N\|_{L^\infty(0,T;L^\infty)} ) | X_1(\tau,t,x) -X_2(\tau,t,x) | d \tau,
\end{multline*}
hence
$$
\|X_1(s,t,\cdot)-X_2(s,t,\cdot)\|_\infty \leq \left(\int_s^t \|\nabla \mathbb{A}^{-1}\|_\infty \|F_1(\tau,\cdot)-F_2(\tau,\cdot)\|_\infty d\tau\right) e^{C(t-s)}. $$
This yields
\begin{equation}\label{bound1}
\|\mathcal{A}(F_1) -\mathcal{A}(F_2)\|_{L^\infty(0,T; L^\infty(\mathbb{R}^3))} \leq C(\|F_1\|_{L^\infty(0,T;W^{1,\infty}(\mathbb{R}^3))})\, T\, \|F_1-F_2\|_{L^\infty(0,T; L^\infty(\mathbb{R}^3))}.
\end{equation}
We construct the following sequence $({F}_k)_{k\in \mathbb{N}} \subset L^\infty(0,T;W^{1,\infty}(\mathbb{R}^3)) $ defined as 
\begin{equation*}
\left\{
\begin{array}{rcl}
F^{k+1}& = &\mathcal{A}(F^k) \,, k \in \mathbb{N}\,,\\
F^0&=&F_0\,.
\end{array}
\right.
\end{equation*}
For $T$ small enough and independent of $N$, using estimates \eqref{bound} and \eqref{bound1}, the sequence $(F^k)_k$ is bounded in $L^\infty(0,T;W^{1,\infty}(\mathbb{R}^3))$ and is a Cauchy sequence in the Banach space $L^\infty(0,T; L^\infty(\mathbb{R}^3))$. There exists a limit $F \in L^\infty(0,T;W^{1,\infty}(\mathbb{R}^3))$ such that $F^k\to F$ in $L^\infty(0,T,L^\infty(\mathbb{R}^3))$ and $\nabla F^k \rightharpoonup \nabla F$ weakly-* in $L^\infty(0,T,L^\infty(\mathbb{R}^3))$. It remains to show that $F=\mathcal{A}({F})$. The weak formulation of the transport equation writes 
\begin{multline*}
\int_0^T \int_{\mathbb{R}^3} \left(\partial_t \psi + \div\left( \psi \cdot [\mathbb{A}^{-1}({F}^k) \kappa g + \mathcal{K}^N\rho^N]+ E^N\right) \right){F}^k =\\ \int_0^T \int_{\mathbb{R}^3}\left( \nabla \mathcal{K}^N\rho^N \cdot {F}^k+\tilde{E}^N \right) \cdot \psi,
\end{multline*}
for all $\psi \in \mathcal{C}_c^1((0,T) \times \mathbb{R}^3)$. Using the strong convergence of $F^k$ to $F$ we get
\begin{multline*}
\int_0^T \int_{\mathbb{R}^3} \left(\partial_t \psi + \div\left( \psi \cdot [\mathbb{A}^{-1}({F}) \kappa g + \mathcal{K}^N\rho^N]+ E^N\right) \right){F} =\\ \int_0^T \int_{\mathbb{R}^3}\left( \nabla \mathcal{K}^N\rho^N \cdot {F}+ \tilde{E}^N  \right)\cdot \psi,
\end{multline*}
Uniqueness of the fixed-point is ensured thanks to estimate \eqref{bound1}.
\end{proof}
Proposition \ref{existence_FN} and formula \eqref{EDO_N} yield the following result
\begin{crllr}
There exists a unique solution of \eqref{correlationN} $F^N\in L^\infty(0,T; W^{1,\infty}(\mathbb{R}^3))$ such that $\mu^N= (\text{id}, F^N) \# \rho^N$ and $\rho^N$ satisfies weakly 
\begin{equation}\label{rho^N_transport}
\partial_t \rho^N + \div [ (\mathbb{A}(F^N))^{-1} \kappa g  +  \mathcal{K}^N \rho^N(x)+E^N) \rho^N] =0.
\end{equation}
\end{crllr}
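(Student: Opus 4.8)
The existence and uniqueness of $F^N\in L^\infty(0,T;W^{1,\infty}(\mathbb{R}^3))$ solving \eqref{correlationN} are exactly the content of Proposition \ref{existence_FN}; what remains is to establish the two structural identities, $\mu^N=(\text{id},F^N)\#\rho^N$ and the transport equation \eqref{rho^N_transport} for $\rho^N$. The core point is that the correlation present at $t=0$, namely $\xi_i(0)=F_0(x_+^i(0))$, is propagated for all $t$:
$$
\xi_i(t)=F^N(t,x_+^i(t))\,,\qquad 1\le i\le N\,,\ t\in[0,T]\,.
$$
The plan is to obtain this from a Cauchy--Lipschitz/Gronwall comparison between the particle trajectories and the characteristics of $F^N$.

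Concretely, let $X^N(\cdot,0,\cdot)$ be the characteristic flow of the velocity field $v^N:=\mathbb{A}(F^N)^{-1}\kappa g+\mathcal{K}^N\rho^N+E^N$ (Lipschitz in space, uniformly in $t$ and $N$, by Proposition \ref{boundedness} and \eqref{bound_err_uniform}), and set $Y_i(t):=X^N(t,0,x_+^i(0))$, $\zeta_i(t):=F^N(t,Y_i(t))$. Combining the representation of $F^N$ along its characteristics obtained in the proof of Proposition \ref{existence_FN} with the semigroup property of $X^N$, the map $t\mapsto F^N(t,Y_i(t))$ is $C^1$ and $(Y_i,\zeta_i)$ solves
$$
\dot Y_i=\mathbb{A}(\zeta_i)^{-1}\kappa g+\mathcal{K}^N\rho^N(t,Y_i)+E^N(t,Y_i)\,,\qquad \dot\zeta_i=\nabla\mathcal{K}^N\rho^N(t,Y_i)\cdot\zeta_i+\tilde E^N(t,Y_i)\,,
$$
with initial datum $(Y_i(0),\zeta_i(0))=(x_+^i(0),F_0(x_+^i(0)))$. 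By \eqref{EDO_N} this is exactly the ODE system governing $(x_+^i,\xi_i)$, with the same initial datum, and its right-hand side is globally Lipschitz in $(Y,\zeta)$: the $\mathbb{A}^{-1}$ term because $\mathbb{A}^{-1}\in W^{2,\infty}$ is in particular Lipschitz, the $\mathcal{K}^N\rho^N$ and $\nabla\mathcal{K}^N\rho^N$ terms because $\mathcal{K}^N\rho^N\in W^{2,\infty}$, and the error terms by \eqref{bound_err_uniform} (here $|\zeta_i|\le\|F^N\|_{L^\infty}$ is bounded). Hence, writing $u_i:=Y_i-x_+^i$, $w_i:=\zeta_i-\xi_i$ and subtracting the two systems, one gets $|\dot u_i|+|\dot w_i|\lesssim |u_i|+|w_i|$ with constant independent of $N$; since $u_i(0)=w_i(0)=0$, Gronwall forces $u_i\equiv w_i\equiv0$ on $[0,T]$, so $\xi_i(t)=F^N(t,x_+^i(t))$ and therefore $\mu^N(t)=\frac1N\sum_i\delta_{(x_+^i(t),\xi_i(t))}=(\text{id},F^N(t,\cdot))\#\rho^N(t)$.

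It then follows that $\mathbb{A}(\xi_i)=\mathbb{A}(F^N(t,x_+^i))$, so \eqref{EDO_N} becomes $\dot x_+^i=v^N(t,x_+^i)$ with $v^N(t,x)=\mathbb{A}(F^N(t,x))^{-1}\kappa g+\mathcal{K}^N\rho^N(t,x)+E^N(t,x)$; consequently, for every $\psi\in\mathcal{C}_c^1((0,T)\times\mathbb{R}^3)$,
$$
\frac{d}{dt}\int_{\mathbb{R}^3}\psi(t,\cdot)\,\rho^N(t,dx)=\frac1N\sum_i\big(\partial_t\psi(t,x_+^i)+\nabla\psi(t,x_+^i)\cdot v^N(t,x_+^i)\big)=\int_{\mathbb{R}^3}\big(\partial_t\psi+\nabla\psi\cdot v^N\big)\rho^N(t,dx)\,,
$$
which, integrated in time and using that $\psi$ vanishes at $t=0$ and $t=T$, is exactly the weak formulation of \eqref{rho^N_transport}; uniqueness of $F^N$ is inherited from Proposition \ref{existence_FN}. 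I expect the only delicate step to be the rigorous justification of the identity defining $\dot\zeta_i$, since $F^N$ is a priori only in $L^\infty(0,T;W^{1,\infty})$: this is handled by never differentiating $F^N$ in a generic direction but only along its own characteristic flow, where the integral representation from the proof of Proposition \ref{existence_FN} makes $t\mapsto F^N(t,Y_i(t))$ manifestly $C^1$; everything else is a routine Gronwall estimate.
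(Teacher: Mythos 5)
Your argument is correct and is essentially the paper's (implicit) proof: the paper merely states that Proposition \ref{existence_FN} together with formula \eqref{EDO_N} yields the corollary, and your characteristics-plus-Gronwall comparison showing $\xi_i(t)=F^N(t,x_+^i(t))$, followed by testing $\dot x_+^i=v^N(t,x_+^i)$ against $\psi\in\mathcal{C}^1_c((0,T)\times\mathbb{R}^3)$, is exactly the standard way to make that deduction rigorous. One cosmetic remark: $t\mapsto F^N(t,Y_i(t))$ need only be absolutely continuous (the integral representation along characteristics has a bounded, measurable integrand, so the ODE for $\zeta_i$ holds a.e.), which suffices for the uniqueness/Gronwall step, so the claimed $C^1$ regularity is not needed.
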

\subsection{proof of Theorem \ref{thm_bis} and \ref{thm2}}
In the previous part we showed the existence of a unique function $F^N$ such that:
$$
\xi_i=F^N(x_+^i).
$$
In order to provide the limit behaviour of the system, we need to extract the limit equation satisfied by $F= \underset{N\to \infty}{\lim} F^N$ and to estimate and specify the convergence. It is straightforward that the limit function $F$ should satisfy the following equation:
\begin{equation}\label{correlation}
\left\{
\begin{array}{rcl}
\partial_t F+ \nabla F \cdot (\mathbb{A}(F)^{-1}\kappa g+  \mathcal{K}\rho) &=& \nabla \mathcal{K}\rho \cdot F, \text{ on $[0,T] \times \mathbb{R}^3,$}\\
F(0,\cdot) & =& F_0.
\end{array}
\right.
\end{equation}
We begin with the proof of local existence and uniqueness of the solution to system \eqref{eq_limite}.
\begin{proof}[Proof of Theorem \ref{thm2}]
Let $p>3$, $F_0 \in W^{2,p}(\mathbb{R}^3)$, $\rho_0 \in W^{1,p}(\mathbb{R}^3)$ having compact support. The idea is to apply a fixed-point argument. We define the operator $A$ which associates to each $u \in  L^\infty(0,T;W^{3,p}(\mathbb{R}^3))$ the following divergence free velocity
$$
 u\mapsto F(u) \mapsto \rho(u) \mapsto \mathcal{A}(u),
$$
where $F(u)\in L^\infty(0,T;W^{2,p}(\mathbb{R}^3)) $ is the unique solution, see Proposition \ref{stab_F}, to the following equation 
\begin{equation*}
\left\{
\begin{array}{rcll}
\partial_t F+ \nabla F \cdot ( \mathbb{A}^{-1}(F) \kappa g + u ) &=& \nabla u \cdot F,\,& \text{ on $[0,T] \times \mathbb{R}^3$},\\
F(0,\cdot) &=& F_0, \,& \text{ on $ \mathbb{R}^3$.}
\end{array}
\right.
\end{equation*}
$\rho(u)\in L^\infty(0,T;W^{1,p}(\mathbb{R}^3)) $ is the unique solution, see Proposition \ref{stab_rho}, to the transport equation 
\begin{equation*}
\left\{
\begin{array}{rcll}
\partial_t \rho+ \div  ( (\mathbb{A}^{-1}(F(u)) \kappa g + u) \rho ) &=& 0,\,& \text{ on $[0,T] \times \mathbb{R}^3$},\\
\rho(0,\cdot) &=& \rho_0, \,& \text{ on $ \mathbb{R}^3$.}
\end{array}
\right.
\end{equation*}
and $\mathcal{A}(u) = \mathcal{K} \rho(u)=6\pi r_0 \Phi * (\kappa \rho(u) g)$. 
The mapping is well-defined, indeed, since $\rho_0 \in W^{1,p}(\mathbb{R}^3)$ we have $\rho \in L^\infty(0,T; W^{1,p}(\mathbb{R}^3))$, see Proposition \ref{stab_rho}. Consequently, applying \cite[Theorem IV.2.1]{Galdi} shows that $\nabla^3 A(u)$, $\nabla^2 A(u) \in L^p(\mathbb{R}^3)$ and we have 
\begin{eqnarray*}
\|\nabla^3 A(u)\|_p  \leq C \|\nabla \rho(u)\|_p, & \|\nabla^2 A(u)\|_p \leq C \|\rho(u)\|_p.
\end{eqnarray*}
On the other hand, since $\rho(t,\cdot) \in L^p(\mathbb{R}^3)$ and is compactly supported, see Remark \ref{rem:supp_comp}, we have in particular $\rho (t,\cdot) \in L^{q_1}(\mathbb{R}^3) \cap L^{q_2}(\mathbb{R}^3)$ with 
\begin{eqnarray*}
\displaystyle q_1=\frac{3p}{3+p} \in ]3/2,3[,& q_2=\dfrac{3p}{3+2p} \in ]1,3/2[.
\end{eqnarray*}
We apply again \cite[Theorem IV.2.1]{Galdi} for $q=q_1$ (resp. $q=q_2$) to get $\nabla A(u) \in L^p(\mathbb{R}^3)$ (resp. $A(u)\in L^p(\mathbb{R}^3)$) and we have according to \cite[Formula IV.2.22]{Galdi} (resp. \cite[Formula IV.2.23]{Galdi})
\begin{eqnarray*}
\|\nabla A(u)\|_p \leq C \|\rho(u)\|_{q_1},& \|A(u)\|_p \leq C\|\rho(u)\|_{q_2}&, 
\end{eqnarray*}
Hence, since $q_1,q_2 <3<p$, Holder's inequality yields
$$
\|\nabla A(u)\|_p+\|A(u)\|_p   \lesssim( \underset{[0,T]}{\sup} | \supp \rho(u)(t,\cdot)|^{1/3}+\underset{[0,T]}{\sup} | \supp \rho(u)(t,\cdot)|^{2/3}) \|\rho(u)\|_p ,
$$
where $\underset{[0,T]}{\sup} | \supp \rho(u)(t,\cdot)|$ depends on $T$, $\|\mathbb{A}^{-1}\|_\infty$, $\|F\|_{L^\infty(0,T;W^{2,p}(\mathbb{R}^3))}$ and $\|u\|_{L^\infty(0,T;W^{2,p}(\mathbb{R}^3))}$ according to Remark \ref{rem:supp_comp}
\begin{equation}\label{borneu_support}
\diam(\supp (\rho(u)(t,\cdot)) 
\leq C(\rho_0, T, \|u\|_{L^\infty(0,T;W^{2,p}(\mathbb{R}^3))}, \|F\|_{L^\infty(0,T;W^{2,p}(\mathbb{R}^3))}),
\end{equation}
Finally we have 
\begin{eqnarray}
\|A(u)\|_{L^\infty(0,T;W^{3,p}(\mathbb{R}^3))} &\leq & C(1+M(T)) \|\rho(u)\|_{L^\infty(0,T;W^{1,p}(\mathbb{R}^3))}, \label{borne_u}\\
\|A(u)\|_{L^\infty(0,T;W^{2,p}(\mathbb{R}^3))} &\leq & C(1+M(T)) \|\rho(u)\|_{L^\infty(0,T;L^p(\mathbb{R}^3))}, \label{stab_u}
\end{eqnarray}
$$M(T)=\underset{[0,T]}{\sup} | \supp \rho(u)(t,\cdot)|^{1/3}(1+\underset{[0,T]}{\sup} | \supp \rho(u)(t,\cdot)|^{1/3}).$$
We recall the following bounds, see Proposition \ref{stab_rho} and Proposition \ref{stab_F}
\begin{eqnarray}\label{borne_rho}
\|\rho(u)\|_{L^\infty(0,T;W^{1,p}(\mathbb{R}^3))}  \leq  \|\rho_0\|_{1,p}e^{CT},
\end{eqnarray}
with $ C=C(\|F(u)\|_{L^\infty(0,T; W^{2,p}(\mathbb{R}^3))},\|u\|_{L^\infty(0,T; W^{3,p}(\mathbb{R}^3))})$. According to Proposition \ref{stab_F}, for a small time interval we have for a fixed $\lambda>1$
\begin{equation}\label{borne_F}
\|F(u)\|_{2,p} \leq \lambda \|F_0\|_{2,p}.
\end{equation}
On the other hand, gathering the stability estimates of Proposition \ref{stab_rho} and Proposition \ref{stab_F} and \eqref{stab_u} we get for $u_i \in W^{3,p}(\mathbb{R}^3)$, $i=1,2$
\begin{align*}
&\|A(u_1)-A(u_2)\|_{L^\infty(0,T;W^{2,p}(\mathbb{R}^3))} \\
&\leq C (1+M(u_1,u_2)(T)) \| \rho(u_1)-\rho(u_2)\|_{L^\infty(0,T;L^p(\mathbb{R}^3))} \\
& \leq C (1+M(u_1,u_2)(T)) T \left(\|F(u_1)-F(u_2)\|_{L^\infty(0,T;W^{1,p}(\mathbb{R}^3))}+ \|u_1-u_2\|_{L^\infty(0,T;W^{1,p}(\mathbb{R}^3))} \right) e^{C_1 T} \\
& \leq C  (1+M(u_1,u_2)(T)) T(1+T)\|u_1-u_2\|_{L^\infty(0,T;W^{2,p}(\mathbb{R}^3))}e^{C_1 T},
\end{align*}
where $C$ depends on $\|u_i\|_{L^\infty(0,T;W^{3,p}(\mathbb{R}^3))}$, $\|F(u_i)\|_{L^\infty(0,T;W^{2,p}(\mathbb{R}^3))}$, $ \|\rho(u_i)\|_{L^\infty(0,T;W^{1,p}(\mathbb{R}^3))} $ and 
\begin{align*}
M(u_1,u_2)(T)&:= \underset{[0,T]}{\sup} |\supp (\rho(u_1)) \cup \supp(\rho(u_2) |^{1/3}(1+\underset{[0,T]}{\sup} |\supp (\rho(u_1)) \cup \supp(\rho(u_2) |^{1/3}),\\
&\lesssim C(T,\|u_i\|_{L^\infty(0,T;W^{2,p}(\mathbb{R}^3))},\|F_i\|_{L^\infty(0,T;W^{2,p}(\mathbb{R}^3))}, \supp(\rho_0)).
\end{align*}
We consider the following sequence 
\begin{equation*}
\left\{
\begin{array}{rcl}
u^{k+1}& = &\mathcal{A}(u^k) \,, k \in \mathbb{N}\,,\\
u^0&=&\mathcal{K}\rho_0 \,.
\end{array}
\right.
\end{equation*}
We set $F^k:=A(u^k)$, $\rho^k:=\rho(u^k)$. Previous estimates show that the sequences $(u_k)_{k \in \mathbb{N}}$, $(F_k)_{k \in \mathbb{N}}$, $(\rho_k)_{k \in \mathbb{N}}$ are uniformly bounded in $L^\infty(0,T;W^{3,p}(\mathbb{R}^3))$, $L^\infty(0,T;W^{2,p}(\mathbb{R}^3))$,\\ $L^\infty(0,T;W^{1,p}(\mathbb{R}^3))$, respectively, and are Cauchy sequences in $ L^\infty(0,T;W^{2,p}(\mathbb{R}^3))$,\\ $L^\infty(0,T;W^{1,p}(\mathbb{R}^3))$, $L^\infty(0,T;L^p(\mathbb{R}^3))$, respectively for $T$ small enough. Consequently, there exists $(u,F,\rho)$ such that
\begin{eqnarray*}
u^k &\to u &  \text{ in } L^\infty(0,T; W^{2,p}(\mathbb{R}^3)),\\
F^k &\to F &  \text{ in } L^\infty(0,T; W^{1,p}(\mathbb{R}^3)),\\
\rho^k &\to \rho &  \text{ in } L^\infty(0,T;L^p(\mathbb{R}^3)).
\end{eqnarray*}
This allows to pass through the limit in the weak formulations of $u^k$ and $\rho^k$. In addition, we use the fact that $\nabla F_k$ converges weakly-* in $L^\infty(0,T; L^\infty(\mathbb{R}^3))$ in order to pass through the limit in the weak formulation of $F^k$.
Hence, the triplet $(u,\rho,F)$ satisfies equation \eqref{eq_limite}. We recover the regularity of each term using the a priori bounds. Uniqueness is a consequence of the previous stability estimates.
\end{proof}
\subsection{Proof of Theorem \ref{thm_bis}}
\begin{proof}[Proof of Theorem \ref{thm_bis}]
We recall that $W_\infty(\rho^N,\rho) \to 0 $ according to \eqref{W0}. We want to show that the triplet $(\rho^N, F^N, \mathcal{K}^N\rho^N)$ converges to 
$(\rho, F, \mathcal{K}\rho)$ the unique solution of equation \eqref{eq_limite}. From Proposition \ref{inifnite_cv} and using the same arguments as in Proposition \ref{stab_F} we have
$$
\|F^N(t,\cdot)-F(t,\cdot)\|_\infty \leq C \int_0^t  W_\infty(s) \left (1+ |\log W_\infty(s)|)+ \frac{W_\infty^2(s)}{d_{\min}^2}  \right ) + \|E^N\|_\infty + \|\tilde{E}^N\|_\infty,
$$
where $W_\infty(s):= W_\infty(\rho^N(s,\cdot),\rho(s,\cdot))$. Hence $F^N $ converges to $F$ in $L^\infty(0,T; L^\infty(\mathbb{R}^3))$ and $\mathcal{K}^N \rho^N$ converges to $\mathcal{K} \rho$ in $L^\infty(0,T; W^{1,\infty}(\mathbb{R}^3))$ if the Wasserstein distance is preserved in finite time. This allows us to pass through the limit in the weak formulation of $\rho^N$
$$
\int_0^t \int_{\mathbb{R}^3} \left( \partial_t \psi+\nabla \psi \cdot \left (\mathcal{A}^{-1}(F^N) \kappa g + \mathcal{K}^N\rho^N \right ) \right) \rho^N =0.
$$
\end{proof}

\appendix 
\section{Some preliminary estimates}
This section is devoted to the proof of the following lemma which is analogous to \cite[Lemma 2.1]{JO}. We drop the dependence with respect to time in what follows.
\begin{lmm}\label{JO}
There exists a positive constant $C$ such that for $k\in[0,2]$ and $N$ large enough
\begin{eqnarray*}
\frac{1}{N} \underset{j \neq i}{\sum} \frac{1}{d_{ij}^k} & \leq & C  \left(\|\rho\|_\infty \frac{W_\infty^3}{d_{\min}^k} + \|\rho\|_\infty^{k/3}\right)\,,\\
\frac{1}{N} \underset{j \neq i}{\sum} \frac{1}{d_{ij}^3} & \leq & C\|\rho\|_\infty \left( \frac{W_\infty^3}{d_{\min}^3} + | \log\left( \|\rho\|_\infty^{1/3} W_\infty\right)|+1 \right)\,.
\end{eqnarray*}
\end{lmm}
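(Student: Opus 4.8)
The plan is the standard $L^\infty$-transport comparison: bound the empirical sum by a continuous convolution against $\rho$, after peeling off the near-diagonal contribution, which is controlled by a counting estimate. Since $\rho(t,\cdot)$ is absolutely continuous, for each (suppressed) time $t$ I would pick a transport map $T\colon \supp\rho \to \mathbb{R}^3$ with $\rho^N = T\#\rho$ and $|T(x)-x|\le W_\infty$ for $\rho$-a.e.\ $x$ (replacing $W_\infty$ by $W_\infty+\varepsilon$ and letting $\varepsilon\to 0$ if the infimum in \eqref{def_wasserstein} is not attained). For $j\ne i$ set $C_j := T^{-1}(\{x_+^j\})$; the sets $C_j$ are pairwise disjoint, each has $\rho$-mass $1/N$, and $C_j \subset B(x_+^j, W_\infty)$. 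This yields a counting estimate: for any $x$ and any $r\ge W_\infty$, the sets $C_j$ with $x_+^j\in B(x,r)$ are disjoint and contained in $B(x,r+W_\infty)\subset B(x,2r)$, so
\[
\#\{\, j : x_+^j \in B(x,r)\,\} \;\le\; N\,\rho\big(B(x,2r)\big) \;\le\; C_0\, N\,\|\rho\|_\infty\, r^3 .
\]

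\textbf{Near/far split.} Next I would write $\sum_{j\ne i} d_{ij}^{-k} = \Sigma_{\mathrm{near}} + \Sigma_{\mathrm{far}}$, with $\Sigma_{\mathrm{near}}$ over $d_{ij}<2W_\infty$ and $\Sigma_{\mathrm{far}}$ over $d_{ij}\ge 2W_\infty$. For the near part each term is at most $d_{\min}^{-k}$ and, by the counting estimate with $r=2W_\infty$, there are at most $C N\|\rho\|_\infty W_\infty^3$ of them, so $\tfrac1N\Sigma_{\mathrm{near}} \lesssim \|\rho\|_\infty W_\infty^3 d_{\min}^{-k}$; this produces the first term in both asserted bounds. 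For the far part, if $d_{ij}\ge 2W_\infty$ and $y\in C_j$ then $|x_+^i-y|\ge d_{ij}-W_\infty\ge \tfrac12 d_{ij}$ and also $|x_+^i-y|\ge W_\infty$, hence $d_{ij}^{-k}\le 2^k |x_+^i-y|^{-k}$ on all of $C_j$; since the $C_j$ are disjoint and contained in $\mathbb{R}^3\setminus B(x_+^i,W_\infty)$,
\[
\frac1N\,\Sigma_{\mathrm{far}} \;=\; \sum_{j\ \mathrm{far}} \int_{C_j} d_{ij}^{-k}\,\rho(y)\,dy \;\le\; 2^k \int_{\mathbb{R}^3\setminus B(x_+^i,W_\infty)} \frac{\rho(y)}{|x_+^i-y|^k}\,dy .
\]

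\textbf{Integral estimate and conclusion.} Finally I would estimate this integral with the optimized cutoff $r_0' := \|\rho\|_\infty^{-1/3}$ (note $W_\infty \le r_0'$ and $\|\rho\|_\infty^{1/3}W_\infty\le 1$ for $N$ large, since $W_\infty\to 0$ by \eqref{W0}): on the annulus $W_\infty\le |y-x_+^i|\le r_0'$ bound $\rho$ by $\|\rho\|_\infty$ and integrate in polar coordinates, on $|y-x_+^i|>r_0'$ bound $|x_+^i-y|^{-k}\le (r_0')^{-k}$ and use $\int\rho\,dy=1$. For $k\in[0,2]$ this gives $\int_{W_\infty}^{r_0'} r^{2-k}\,dr \le (r_0')^{3-k}/(3-k)$, so the integral is $\lesssim \|\rho\|_\infty (r_0')^{3-k} + (r_0')^{-k} = 2\|\rho\|_\infty^{k/3}$, and together with the near estimate this gives the first inequality. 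For $k=3$ the annulus contributes $\int_{W_\infty}^{r_0'} r^{-1}\,dr = \log(r_0'/W_\infty) = |\log(\|\rho\|_\infty^{1/3} W_\infty)|$ and the outer region contributes $(r_0')^{-3}=\|\rho\|_\infty$, yielding $\lesssim \|\rho\|_\infty(|\log(\|\rho\|_\infty^{1/3}W_\infty)|+1)$ and hence the second inequality. There is no deep obstacle here: the argument rests entirely on the disjointness of the transported cells $C_j$ and the right choice of cutoff. The only points needing care are (i) compatibility of the split at $2W_\infty$ with $d_{ij}\ge d_{\min}$ and the lower bound $W_\infty\ge d_{\min}/2$ of \eqref{d_min}, and (ii) the ``$N$ large enough'' reductions $W_\infty\le\|\rho\|_\infty^{-1/3}$ and $\|\rho\|_\infty^{1/3}W_\infty\le1$; keeping $C$ independent of $N$, $i$ and $\rho$ then only uses that $1/(3-k)$ is bounded on $k\in[0,2]$.
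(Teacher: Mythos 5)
Your proof is correct and follows essentially the same route as the paper's: compare the empirical sum to the continuous density via the optimal transport map $T$ with $\rho^N=T\#\rho$, split near/far at scale $W_\infty$, bound the near contribution by $\|\rho\|_\infty W_\infty^3 d_{\min}^{-k}$, and bound the far contribution by $\int \rho(y)|x_+^i-y|^{-k}\,dy$ with the optimized cutoff $\|\rho\|_\infty^{-1/3}$ (log term for $k=3$). The only cosmetic difference is bookkeeping: you control the near-diagonal part by counting the disjoint transported cells $C_j=T^{-1}(\{x_+^j\})$, whereas the paper inserts a truncation $\chi(\cdot/d_{\min})$ into the integral against $\rho^N$ before transporting; the two devices play the same role.
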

\begin{proof}
We introduce a radial truncation function $\chi$ such that $\chi=0$ on $B(0,1/2)$ and $\chi=1$ on ${}^cB(0,3/4)$. We have for all $k \geq 0$:
\begin{align*}
\frac{1}{N} \underset{j \neq i}{\sum} \frac{1}{d_{ij}^k} &= \int_{\mathbb{R}^3} \chi\left( \frac{x_i-y}{d_{\min}} \right) \frac{1}{|x_i-y|^k} \rho^N(t,dy)\,,\\
&=\int_{\mathbb{R}^3} \chi\left( \frac{x_i-T(y)}{d_{\min}} \right) \frac{1}{|x_i-T(y)|^k} \rho(t,dy)\,,\\
& = \left (\int_{B(x_i,3 W_\infty)} + \int_{{}^c B(x_i,3 W_\infty)}\right)\chi\left( \frac{x_i-T(y)}{d_{\min}} \right) \frac{1}{|x_i-T(y)|^k} \rho(t,dy)\,.
\end{align*}
Recall that $W_\infty \geq d_{\min}/2$. Since $\chi \left( \frac{x_i-T(y)}{d_{\min}} \right)=0$ if $|x_i-T(y)| \leq d_{\min}/2$, the first term yields:
$$
\int_{B(x_i,3 W_\infty)}\chi\left( \frac{x_i-T(y)}{d_{\min}} \right) \frac{1}{|x_i-T(y)|^k} \rho(t,dy) \leq C \|\rho\|_\infty \frac{W_\infty^3}{d_{\min}^k}.
$$
For the second term, we have $|x_i-T(y)|\geq |x_i-y|-|y-T(y)|\geq \frac{[x_i-y|}{2}$ and we get for $k \in[0,2]$:  
\begin{align*}
&\int_{{}^c B(x_i,3 W_\infty)}\chi\left( \frac{x_i-y}{d_{\min}} \right) \frac{1}{|x_i-T(y)|^k} \rho(t,dy) \\
&\leq \|\rho\|_\infty \int_{3W_\infty}^{A} \frac{1}{r^{k-2}}dr + A^{-k}\|\rho\|_{L^1},\\
&\leq \|\rho\|_\infty A^{3-k}+A^{-k},
\end{align*}
for all constant $A> 3 W_\infty$ and one can show that the optimal constant is $A=\|\rho\|_\infty^{-1/3}$ which yields the desired result. We proceed analogously for $k=3$.
\end{proof}
\section{Estimates on $\mathcal{K}^N \rho^N$, $\mathcal{K} \rho$ and control of the minimal distance}
In this part we present some estimates for the convergence of the velocity field $\mathcal{K}^N \rho^N$ and its gradient towards $\mathcal{K} \rho$ and its gradient. We estimate the $\infty$ norm of the error using the infinite Wasserstein distance between $\rho^N$ and $\rho$ in the spirit of \cite{Hauray,HJ}.\\
We recall that, according to \cite{Champion}[Theorem 5.6], at fixed time $t\geq 0$, there exists a (unique) optimal transport map $T$ satisfying :
$$
W_\infty :=W_\infty( \rho(t,\cdot),{\rho}^N (t,\cdot)) = \rho \text { - esssup } |T(x)-x|,
$$
with ${\rho}^N(t,\cdot) = T \# \rho(t,\cdot)$. This allows us to write $\mathcal{K}^N\rho^N$ as follows 
$$
\mathcal{K}^N\rho^N(x) = 6 \pi r_0 \int \chi \Phi(x-T(y)) \rho(y) dy.
$$
This important property allows us to show the following results.
\begin{prpstn}[Boundedness]\label{boundedness}
Under the assumption that $\rho \in W^{1,1}(\mathbb{R}^3) \cap W^{1,\infty}(\mathbb{R}^3)$, there exists a positive constant $C>0$ independent of $N$ such that:
$$\| \mathcal{K}^N\rho^N \|_{W^{2,\infty}} \leq C \left (1+\frac{W_\infty^3}{d_{\min}}+\frac{W_\infty^3}{d_{\min}^2}+\frac{W_\infty^3}{d_{\min}^3} \right) \|\rho\|_{W^{1,\infty}(\mathbb{R}^3) \cap W^{1,1}(\mathbb{R}^3)},$$
where 
$$
W_\infty :=W_\infty( \rho(t,\cdot), \bar{\rho}^N (t,\cdot)) = \rho \text { - esssup } |T_t(x)-x|.
$$
\end{prpstn}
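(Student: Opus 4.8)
The plan is to bound $\mathcal K^N\rho^N$, $\nabla\mathcal K^N\rho^N$ and $\nabla^2\mathcal K^N\rho^N$ in $L^\infty$ one at a time, starting from the two representations
\[
\mathcal K^N\rho^N(x)=6\pi r_0\int_{\mathbb R^3}\chi\Phi(x-T(y))\,\kappa g\,\rho(y)\,dy=\frac{6\pi r_0}{N}\sum_{i=1}^N\chi\Phi(x-x_+^i)\,\kappa g,
\]
where $T$ is the optimal transport map of \eqref{def_wasserstein}, so that $|T(y)-y|\le W_\infty$ $\rho$-a.e. (and $W_\infty\ge d_{\min}/2$). The first step is to record the pointwise bound $|\nabla^\beta(\chi\Phi)(z)|\lesssim|z|^{-1-|\beta|}\mathbf 1_{\{|z|\ge d_{\min}/4\}}$ for $|\beta|\le 2$: away from $B(0,d_{\min}/4)$ this is just the decay of $\Phi$, while every derivative falling on $\chi(\cdot/d_{\min})$ is supported on the shell $d_{\min}/4\le|z|\le d_{\min}/2$, where each factor $d_{\min}^{-1}$ it produces is comparable to $|z|^{-1}$. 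With this in hand I would split each $y$-integral into the \emph{concentration zone} $\{|x-y|\le 3W_\infty\}$ and its complement.

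On the concentration zone one has $|x-T(y)|\le 4W_\infty$, so the pointwise bound together with $\|\rho\|_\infty$ and the volume estimate $|B(x,3W_\infty)|\lesssim W_\infty^3$ give a contribution $\lesssim\|\rho\|_\infty\,d_{\min}^{-1-|\beta|}W_\infty^3$ for $\nabla^\beta(\chi\Phi)(x-T(\cdot))$; taking $|\beta|\in\{0,1,2\}$ produces exactly the terms $W_\infty^3/d_{\min}$, $W_\infty^3/d_{\min}^2$, $W_\infty^3/d_{\min}^3$. On the complement $\{|x-y|>3W_\infty\}$ one has $|x-T(y)|\ge\tfrac23|x-y|$ and $\chi\Phi=\Phi$ there, so for $\mathcal K^N\rho^N$ and $\nabla\mathcal K^N\rho^N$ the contribution is controlled, up to constants, by $\int_{\mathbb R^3}|x-y|^{-1-|\beta|}\rho(y)\,dy$ with $|\beta|\le1$; these kernels are locally integrable, so splitting at $|x-y|=1$ and using $\rho\in L^\infty$ near $x$ and $\rho\in L^1$ for the tail bounds these by $\|\rho\|_{L^\infty\cap L^1}$, hence by $\|\rho\|_{W^{1,\infty}\cap W^{1,1}}$.

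The delicate point — and the reason the proposition assumes $\rho\in W^{1,\infty}\cap W^{1,1}$ rather than merely $L^\infty\cap L^1$ — is the far-zone contribution of $\nabla^2\mathcal K^N\rho^N$, since $|x-y|^{-3}$ is only borderline integrable and a crude size estimate produces a spurious $|\log W_\infty|$ (such a logarithm indeed appears in Lemma \ref{JO}). Here I would split once more at a fixed radius: for $|x-y|>1$ the kernel $\nabla^2\Phi(x-T(y))$ is bounded and $\rho\in L^1$ suffices, while on the annulus $3W_\infty<|x-y|\le1$ (where $\chi\Phi=\Phi$) I would write $\rho(y)=\rho(x)+(\rho(y)-\rho(x))$. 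For the increment term one uses $|\rho(y)-\rho(x)|\le\|\nabla\rho\|_\infty|x-y|\lesssim\|\nabla\rho\|_\infty|x-T(y)|$, valid since $|x-T(y)|\ge\tfrac12|x-y|$ on this set; this gains the missing power and makes $\nabla^2\Phi(x-T(y))(\rho(y)-\rho(x))$ integrable with a bound $\lesssim\|\nabla\rho\|_\infty$ independent of $W_\infty$ and $d_{\min}$. For the constant term $\rho(x)$ one invokes the cancellation $\int_{a<|z|<b}\nabla^2\Phi(z)\,dz=0$ (a consequence of the homogeneity of $\nabla^2\Phi$ and the divergence theorem), comparing $T(y)$ with $y$ through the first-order estimate $|\nabla^2\Phi(x-T(y))-\nabla^2\Phi(x-y)|\lesssim W_\infty\,|x-y|^{-4}$ to absorb the error coming from $T\ne\mathrm{id}$; this yields $\lesssim\|\rho\|_\infty$ with no logarithm.

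Collecting the concentration-zone and far-zone estimates for $|\beta|=0,1,2$ gives the claimed inequality. I expect the main obstacle to be precisely this last step: reconciling the borderline non-integrability of $\nabla^2\Phi$ with the fact that $T$ does not preserve Lebesgue measure, which forces one to use both the $W^{1,\infty}$-regularity of $\rho$ and the oscillation of $\nabla^2\Phi$ rather than pure size bounds — everything else is a routine bookkeeping of the singular and truncation scales.
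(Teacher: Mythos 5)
Your proposal is correct, and the splitting into the concentration zone $\{|x-y|\le 3W_\infty\}$ (kernel bounded by $d_{\min}^{-1-|\beta|}$ on the support of the truncation, volume $W_\infty^3$) and the far zone (where $\chi\Phi=\Phi$ and $|x-T(y)|\gtrsim|x-y|$) is exactly the paper's; so are the $|\beta|\le 1$ far-field bounds via $L^\infty$ near $x$ and $L^1$ at infinity. Where you genuinely diverge is the borderline far-field estimate for $\nabla^2\mathcal{K}^N\rho^N$. The paper first compares $\nabla^2(\chi\Phi)(x-T(y))$ with $\nabla^2\Phi(x-y)$ by the same mean-value bound $\lesssim |y-T(y)|\,(|x-y|^{-4}+|x-T(y)|^{-4})$ you use, and then treats the remaining integral $\int_{{}^cB(x,W_\infty)}\nabla^2\Phi(x-y)\rho(y)\,dy$ by integrating by parts, moving one derivative onto $\rho$; this produces $\int\nabla\Phi(x-y)\nabla\rho(y)\,dy$ plus a boundary term on $\partial B(x,W_\infty)$ and is why the hypothesis and the constant involve $\|\nabla\rho\|_{L^1\cap L^\infty}$. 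You instead keep the second-order kernel and use the Calder\'on--Zygmund-type structure: the Lipschitz increment $\rho(y)-\rho(x)$ gains one power of $|x-y|$, and the constant part is killed by the annulus cancellation $\int_{a<|z|<b}\nabla^2\Phi(z)\,dz=0$, which indeed follows from the divergence theorem and the degree $-2$ homogeneity of $\nabla\Phi$ (the two boundary integrals are scale invariant and cancel). Both routes are valid and of comparable length; the paper's integration by parts avoids invoking any cancellation of the kernel but consumes $\nabla\rho\in L^1\cap L^\infty$, whereas your argument only needs $\rho\in L^1\cap L^\infty$ and $\nabla\rho\in L^\infty$ for this step, at the price of justifying the mean-zero property and handling the $T\neq\mathrm{id}$ error (which you do, with the same $W_\infty|x-y|^{-4}$ comparison the paper uses). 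Either way the final bound is $\lesssim\|\rho\|_{W^{1,\infty}\cap W^{1,1}}$ plus the three truncation terms, as claimed.
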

%\begin{rem}
%Note that $ \|\rho\|_{W^{1,\infty} \cap W^{1,1}}\lesssim \|\rho\|_{X_\beta} +\|\nabla \rho\|_{X_\beta}$.
%\end{rem}
\begin{rmrk}\label{hypW^3d^3}
The term $\frac{W_\infty^3}{d_{\min}^3}$ appears only for the second derivative of $\mathcal{K}^N\rho^N$ which is needed for the proof of Theorem \ref{thm_bis}.
\end{rmrk}
\begin{proof}
Let $x\in \mathbb{R}^3$, we have :
\begin{align*}
\left| \mathcal{K}^N\rho^N (x) \right| &\leq C \int \left|\chi \Phi(x-T(y)) \rho(y) dy \right|,\\
&\leq C \|\rho\|_\infty \int_{B(x,3W_\infty)}\left |\chi \Phi(x-T(y))\right| + \int_{{}^c B(x,W_\infty) } \left |\chi \Phi(x-T(y))\right||\rho(y)| dy.
\end{align*}
Recall that for all $ y \in B(x,3W_\infty)$ such that $|x-T(y)| \leq d_{\min}/2$ we have $\chi \Phi(x-T(y))=0$. Hence in all cases we have the following bound for all $y\in B(x,3W_\infty)$:
$$
\left |\chi \Phi(x-T(y))\right| \leq \frac{C}{d_{\min}},
$$
this yields the following bound 
$$
\int_{B(x,3W_\infty)} \left |\chi \Phi(x-T(y))\right| \leq C \frac{W_\infty^3}{d_{\min}}.
$$
For all y ${}^c B(x,W_\infty)$ we have that $ |x-T(y)|\geq |x-y| - |T(y) -y| \geq 2 W_\infty \geq d_{\min} $. This ensures that $\chi \Phi(x-T(y))=\Phi(x-T(y))$ on ${}^c B(x,W_\infty)$. Moreover we have 
$$ |x-T(y)| \geq |x-y| - W_\infty \geq \frac{1}{2} |x-y|,$$
which yields 
\begin{align*}
\int_{{}^c B(x,W_\infty) }\left |\chi \Phi(x-T(y))\right||\rho(y) dy& \leq C \|\rho\|_\infty \int_{{}^c B(x,W_\infty) \cap B(x,1) } \frac{dy}{|x-y|}+ \|\rho\|_{L^1},\\
&\leq C \|\rho\|_{L^1(\mathbb{R}^3) \cap L^\infty(\mathbb{R}^3)}.
\end{align*}
Analogously we obtain a similar bound for $\nabla \mathcal{K}^N$. We focus now on the bound for $\nabla^2 \mathcal{K}^N \rho^N$. We have
\begin{equation*}
\left| \nabla ^2 \mathcal{K}^N\rho^N (x)  \right|
\leq C \|\rho\|_\infty \int_{B(x,3W_\infty)}\left | \nabla ^2\chi \Phi(x-T(y))\right|dy  + \left |\int_{{}^c B(x,W_\infty) }  \nabla ^2 \chi \Phi(x-T(y))\rho(y) dy\right|.
\end{equation*}
We use the same estimates as before to bound the first term by $\|\rho\|_\infty \frac{W_\infty^3}{d_{\min}^3}$. For the second term we write
\begin{multline}\label{terme_app_1}
\left | \int_{{}^c B(x,W_\infty) }  \nabla ^2 \chi \Phi(x-T(y))\rho(y) dy \right| \leq \left |\int_{{}^c B(x,W_\infty) }  \nabla ^2 \Phi(x-y)\rho(y) dy\right|\\
+ \int_{{}^c B(x,W_\infty) } \left | \nabla ^2 \chi \Phi(x-T(y))- \nabla ^2 \Phi(x-y)\right| |\rho(y)| dy.
\end{multline}
Using an integration by parts for the first term in the right hand side of \eqref{terme_app_1} we get 
\begin{align*}
\left |\int_{{}^c B(x,W_\infty) }  \nabla ^2 \Phi(x-y)\rho(y) dy\right| & \leq \left |\int_{{}^c B(x,W_\infty) }  \nabla  \Phi(x-y) \nabla \rho(y) dy\right|\\
& + \int_{\partial B(x,W_\infty)} \left |\nabla \Phi(x-y) \right | | \rho(y)| d\sigma(y)\,,\\
& \leq C \|\nabla \rho\|_{L^1(\mathbb{R}^3) \cap L^\infty(\mathbb{R}^3) }+ \|\rho\|_\infty.
\end{align*}
Finally, for the second term in the right hand side of \eqref{terme_app_1} we have
\begin{align*}
\phantom{=}& \int_{{}^c B(x,W_\infty) } \left | \nabla ^2 \chi \Phi(x-T(y)) - \nabla^2 \Phi(x-y) \right||\rho(y)| dy  \,\\
&\leq  \int_{{}^c B(x,W_\infty) } \left ( \frac{1}{|x-y|^4}+\frac{1}{|x-T(y)|^4} \right)|y-T(y)||\rho(y)| dy \, \\
& \leq C \|\rho\|_{L^1(\mathbb{R}^3) \cap L^\infty(\mathbb{R}^3)}.
\end{align*}
\end{proof}
The following convergence estimates are used in the proof of Theorem \ref{thm_bis}.
\begin{prpstn}[Convergence estimates]\label{inifnite_cv}
The following estimates hold true: 
\begin{eqnarray*}
\| \mathcal{K}^N\rho^N - \mathcal{K} \rho \|_{L^\infty} & \lesssim & \|\rho\|_\infty W_\infty(\rho^N,\rho)\left(1+ \frac{W_\infty(\rho^N,\rho)^2}{d_{\min}} \right), \\
\|\nabla \mathcal{K}^N\rho^N - \nabla \mathcal{K} \rho \|_{L^\infty} & \lesssim &\|\rho\|_\infty W_\infty(\rho^N,\rho)\left ( | \log W_\infty(\rho^N,\rho)| + \frac{W_\infty(\rho^N,\rho)^2}{d_{\min}^2}+1 \right).
\end{eqnarray*}
\end{prpstn}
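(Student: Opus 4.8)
The plan is to work at a fixed time $t$ and to use the optimal transport map $T$ provided by \cite{Champion}, for which $\rho^N(t,\cdot) = T\#\rho(t,\cdot)$ and $|T(y)-y| \le W_\infty$ for $\rho$-a.e.\ $y$, where $W_\infty := W_\infty(\rho^N(t,\cdot),\rho(t,\cdot))$. This gives, for every $x \in \mathbb{R}^3$,
$$
\mathcal{K}^N\rho^N(x) - \mathcal{K}\rho(x) = 6\pi r_0 \int_{\mathbb{R}^3}\big[(\chi\Phi)(x-T(y)) - \Phi(x-y)\big]\kappa g\,\rho(y)\,dy,
$$
and the analogous identity with $\nabla(\chi\Phi)$, $\nabla\Phi$ in place of $\chi\Phi$, $\Phi$ for the gradient. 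I would then split each integral over the near zone $B(x,2W_\infty)$ and the far zone ${}^cB(x,2W_\infty)$ and estimate the two pieces separately; all multiplicative constants below are absorbed into $\lesssim$ in the sense recalled in the Notations.

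\emph{Near zone.} Since $(\chi\Phi)(z) = \chi(z/d_{\min})\Phi(z)$ vanishes for $|z| \le d_{\min}/4$, one has the crude bounds $|(\chi\Phi)(z)| \lesssim d_{\min}^{-1}$ and $|\nabla(\chi\Phi)(z)| \lesssim d_{\min}^{-2}$ valid for every $z$, while $|\Phi(z)| \lesssim |z|^{-1}$ and $|\nabla\Phi(z)| \lesssim |z|^{-2}$ are locally integrable in $\mathbb{R}^3$. Together with $\rho \in L^\infty$, this bounds the near-zone part of the first identity by $\|\rho\|_\infty\big(W_\infty^2 + W_\infty^3/d_{\min}\big)$ and that of the second by $\|\rho\|_\infty\big(W_\infty + W_\infty^3/d_{\min}^2\big)$. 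Since $W_\infty \lesssim 1$ for $N$ large by \eqref{W0}, these are respectively $\lesssim \|\rho\|_\infty W_\infty\big(1 + W_\infty^2/d_{\min}\big)$ and $\lesssim \|\rho\|_\infty W_\infty\big(1 + W_\infty^2/d_{\min}^2\big)$.

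\emph{Far zone.} If $|x-y| \ge 2W_\infty$, then using $W_\infty \ge d_{\min}/2$ from \eqref{d_min} one gets $|x-T(y)| \ge |x-y| - W_\infty \ge \frac12|x-y| \ge d_{\min}/2$, so the truncation is inactive and $(\chi\Phi)(x-T(y)) = \Phi(x-T(y))$, $\nabla(\chi\Phi)(x-T(y)) = \nabla\Phi(x-T(y))$. The mean value theorem along the segment joining $x-y$ and $x-T(y)$, on which the argument stays $\gtrsim |x-y|$, then yields $|\Phi(x-T(y)) - \Phi(x-y)| \lesssim W_\infty/|x-y|^2$ and $|\nabla\Phi(x-T(y)) - \nabla\Phi(x-y)| \lesssim W_\infty/|x-y|^3$. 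Integrating these against $|\rho|$ over ${}^cB(x,2W_\infty)$, splitting once more at scale $1$, and using $\rho \in L^1 \cap L^\infty$, the far-zone contribution is $\lesssim \|\rho\|_\infty W_\infty$ in the first case and $\lesssim \|\rho\|_\infty W_\infty\big(1 + |\log W_\infty|\big)$ in the second, the logarithm coming from $\int_{2W_\infty}^1 dr/r$. Adding the near- and far-zone bounds gives precisely the two claimed estimates.

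The argument is mostly bookkeeping once the optimal-map representation is in place. The only points requiring care are the handling of the two competing scales $d_{\min}$ and $W_\infty$ in the near zone (the truncation buys one power of $|z|^{-1}$ at the cost of a power of $d_{\min}^{-1}$), and checking that on the far zone the truncation is genuinely inactive so that the Lipschitz bounds for $\Phi$ and $\nabla\Phi$ apply with the sharp negative powers --- this is what produces the correction terms $W_\infty^2/d_{\min}$ and $W_\infty^2/d_{\min}^2$ and the logarithmic loss, and is the main thing to get right.
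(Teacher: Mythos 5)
Your proof is correct and follows essentially the same route as the paper: represent the difference via the optimal transport map, split at a ball of radius $\sim W_\infty$, use the truncation scale $d_{\min}$ plus local integrability of $\Phi$, $\nabla\Phi$ in the near zone, and check the cut-off is inactive in the far zone before applying the Lipschitz bounds that produce the $|\log W_\infty|$ term. The only (harmless) differences are cosmetic: you cut at $2W_\infty$ instead of $3W_\infty$, and you treat the tail at infinity more carefully than the paper by splitting at scale $1$ and invoking $\rho\in L^1$.
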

%\begin{rem}\label{c2:rem_wasserstein}
%Following the method of proof introduced by M. Hauray and P. E. Jabin in \cite{HJ}, one can show that the variation of the Wasserstein distance is bounded by these two convergence estimates. Since there is a log term appearing, the conservation of the Wasserstein distance is not ensured.
%\end{rem}
\begin{proof}
We use in the proof the shortcut $W_\infty:= W_\infty(\rho^N,\rho)$. Let $x \in \mathbb{R}^3 $, we have 
\begin{align*}
\left |\mathcal{K}^N \rho^N(x) - \mathcal{K} \rho(x) \right| & \leq 6 \pi r_0 \int_{\supp \rho} \left | \chi \Phi(x-T(y)) - \Phi(x-y) \right | \rho(y) dy.
\end{align*}
We split the integral into two disjoint domains $J:= \{ y \in \supp \rho \,,\, |x-y| \leq 3 W_\infty\}$ and its complementary. Note that on $J$, according to the definition of the truncation function $\chi$, we have  $\chi \Phi(x-T(y))=0$ for all $ y \in J$ such that $ |x-T(y)| \leq \frac{d_{\min}}{4}$ . We can then bound directly the first integral as follows
\begin{align*}
\int_{J} \left | \chi \Phi(x-T(y)) - \Phi(x-y) \right | \rho(y) dy& \leq \int_{J} \left | \chi \Phi(x-T(y))\right| \rho(y) dy+ \int_J  \left |\Phi(x-y) \right | \rho(y) dy\\
& \lesssim  \|\rho\|_\infty \left (|B(x,3W_\infty)| \frac{4}{d_{\min}} + \int_{B(x,3W_\infty)}\frac{1}{|x-y|} dy\right).
\end{align*}
Direct computations yields 
$$
\int_{J} \left | \chi \Phi(x-T(y)) - \Phi(x-y) \right | \lesssim \|\rho\|_\infty \left ( \frac{W_\infty^3}{d_{\min}} + W_\infty^2 \right ).
$$
We focus now on the remaining term, note that for all $ y\in {}^c J:={}^c B(x,3W_\infty)$ we have 
$$
|x-T(y)| \geq |x-y| - |T(y)-y| \geq 2 W_\infty \geq d_{\min},
$$
which yields that $ \chi\Phi(x-T(y)) = \Phi(x-T(y))$ on ${}^cJ$. Moreover, we have $ |x-T(y)| \geq \frac{1}{2} |x-y|$ on ${}^cJ$. We have then
\begin{align*}
\int_{{}^cJ} \left | \chi \Phi(x-T(y)) - \Phi(x-y) \right |& = \int_{{}^cJ} \left | \Phi(x-T(y)) - \Phi(x-y) \right |,\\
&\leq K \int_{{}^cJ} \left(\frac{1}{|x-T(y)|^2}+  \frac{1}{|x-y|^2}\right) |y-T(y)| \rho(y) dy,\\
&\lesssim W_\infty \|\rho\|_\infty \int_{{}^c J} \frac{1}{|x-y|^2} dy,\\
&\lesssim W_\infty \|\rho\|_\infty.
\end{align*}
In the last line we use the fact that $\frac{1}{|x-y|^2}$ is integrable on ${}^c B(x,3W_\infty)$. The proof for the second estimate is analogous to the first one. The main difference occurs for the last estimate where the $log$ term appears. This is due to the fact that we integrate $\frac{1}{|x-y|^3}$ on ${}^c B(x,3W_\infty)$.
\end{proof}
We present now an estimate for the conservation of the particle configuration. This estimate combined with Proposition \ref{boundedness} shows that the dilution regime is conserved provided that we have a control on the infinite Wasserstein distance.
\begin{prpstn}\label{conservation_distance}
For all $1\leq i \leq N $ and $j \neq i$ we have
\begin{eqnarray*}
|\dot{\xi}_i| & \lesssim & \|\nabla \mathcal{K}^N\rho^N\|_\infty \,|{\xi}_i|+ O \left(d_{\min} \right), \\
\left |\dot{x}_+^i-\dot{x}_+^j \right| &\lesssim & \|\nabla \mathcal{K}^N\rho^N\|_\infty\, |x_+^i-x_+^j|+ |\xi_i-\xi_j| +O(R), \\
\left |\dot{\xi}_i-\dot{\xi}_j \right| &\lesssim&  \|\nabla \mathcal{K}^N\rho^N\|_\infty\, \left |{\xi}_i-{\xi}_j \right| + \|\nabla^2 \mathcal{K}^N\rho^N\|_\infty\, \left |{x}_+^i-{x}_+^j \right|+O \left(d_{\min} \right).
\end{eqnarray*}
\end{prpstn}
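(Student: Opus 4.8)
All three bounds are \emph{Lipschitz-type} estimates, and the plan is simply to difference the explicit velocity laws for each pair, as collected in \eqref{EDO_N}:
\begin{equation*}
\dot x_+^i = (\mathbb{A}(\xi_i))^{-1}\kappa g + \mathcal{K}^N\rho^N(x_+^i) + E^N(x_+^i),\qquad \dot\xi_i = \frac{U_-^i}{R} = \nabla \mathcal{K}^N\rho^N(x_+^i)\cdot\xi_i + \tilde E^N(x_+^i),
\end{equation*}
using the uniform error control \eqref{bound_err_uniform} (so $\|E^N\|_\infty,\|\tilde E^N\|_\infty=O(d_{\min})$ and $\|\nabla E^N\|_\infty+\|\nabla\tilde E^N\|_\infty<C$) and the fact that $\|\mathcal{K}^N\rho^N\|_{W^{2,\infty}}$ is bounded uniformly in $N$ by Proposition~\ref{boundedness} under the standing assumptions \eqref{d_min+}--\eqref{d_min+_bis} (the second--order bound is the only place \eqref{d_min+_bis} is needed, cf. Remark~\ref{hypW^3d^3}).

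The first inequality is immediate from the formula for $\dot\xi_i$ together with $\|\tilde E^N\|_\infty=O(d_{\min})$. For the second I would subtract the two expressions for $\dot x_+$,
\begin{equation*}
\dot x_+^i-\dot x_+^j = \big[(\mathbb{A}(\xi_i))^{-1}-(\mathbb{A}(\xi_j))^{-1}\big]\kappa g + \big[\mathcal{K}^N\rho^N(x_+^i)-\mathcal{K}^N\rho^N(x_+^j)\big] + \big[E^N(x_+^i)-E^N(x_+^j)\big],
\end{equation*}
and bound the brackets one by one: the first by $\|\nabla\mathbb{A}^{-1}\|_\infty|\kappa g|\,|\xi_i-\xi_j|\lesssim|\xi_i-\xi_j|$, since $\mathbb{A}^{-1}\in W^{1,\infty}$; the second by $\|\nabla\mathcal{K}^N\rho^N\|_\infty|x_+^i-x_+^j|$ via the mean value theorem (equivalently, one may difference the explicit finite sum $\mathcal{K}^N\rho^N=\frac{6\pi r_0}{N}\sum_k\chi\Phi(\cdot-x_+^k)\kappa g$ directly, the truncation being inert because $d_{ik},d_{jk}\ge d_{\min}$); and the last by $O(d_{\min})$ from \eqref{bound_err_uniform}, or, to obtain the sharper $O(R)$ written in the statement, by tracking the error terms exactly as in the proof of Corollary~\ref{velocity_+}, where every contribution apart from the method-of-reflections tail carries an explicit factor $R=r_0/(2N)$ through Lemma~\ref{JO} (with $k=2$) and the decay rate \eqref{decay_rate_R} of $\mathcal{R}$, while the structural $\Phi$-sum error telescopes precisely into the $\|\nabla\mathcal{K}^N\rho^N\|_\infty|x_+^i-x_+^j|$ term. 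For the third inequality I would split
\begin{equation*}
\dot\xi_i-\dot\xi_j = \nabla\mathcal{K}^N\rho^N(x_+^i)\cdot(\xi_i-\xi_j) + \big[\nabla\mathcal{K}^N\rho^N(x_+^i)-\nabla\mathcal{K}^N\rho^N(x_+^j)\big]\cdot\xi_j + \big[\tilde E^N(x_+^i)-\tilde E^N(x_+^j)\big],
\end{equation*}
and bound the first term by $\|\nabla\mathcal{K}^N\rho^N\|_\infty|\xi_i-\xi_j|$, the second by $\|\nabla^2\mathcal{K}^N\rho^N\|_\infty|x_+^i-x_+^j|\,|\xi_j|\le M_1\|\nabla^2\mathcal{K}^N\rho^N\|_\infty|x_+^i-x_+^j|$ using $|\xi_j|\le M_1$ from \eqref{hyp_bound}, and the last by $2\|\tilde E^N\|_\infty=O(d_{\min})$.

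I do not expect a serious obstacle here: the argument is a triangle-inequality and mean value theorem computation on the velocity ODEs. The two points that need attention are (i) that $\|\nabla^2\mathcal{K}^N\rho^N\|_\infty$ -- which enters only the third estimate -- is finite precisely because of assumption \eqref{d_min+_bis}, so this Proposition (and through it Theorem~\ref{thm_bis}) genuinely uses the stronger dilution regime; and (ii) keeping the constants coming from $\|\nabla\mathbb{A}^{-1}\|_\infty$ and from $|\xi_j|$ harmless, which rests on $\mathbb{A}^{-1}$ being Lipschitz and on the $\xi_i$ remaining in the compact annulus \eqref{hyp_bound}. The error fields $E^N,\tilde E^N$ are then dispatched by \eqref{bound_err_uniform} directly, with the finer bookkeeping above invoked only if one wants the optimal order $O(R)$ in the middle estimate.
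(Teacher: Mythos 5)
Since the paper states Proposition \ref{conservation_distance} without any proof, the only question is whether your argument is sound. It is the natural (and surely the intended) one: difference the velocity laws \eqref{EDO_N}, apply the mean value theorem together with the $W^{2,\infty}$ bound of Proposition \ref{boundedness}, use the Lipschitz bound on $\mathbb{A}^{-1}$, the bound $|\xi_j|\le M_1$ from \eqref{hyp_bound}, and the error control \eqref{bound_err_uniform}. This yields the first and third inequalities exactly as stated, and your remark that the $\nabla^2\mathcal{K}^N\rho^N$ bound is where \eqref{d_min+_bis} enters (Remark \ref{hypW^3d^3}) is on target.

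The one weak point is the error order in the second inequality. Your primary argument gives $|E^N(x_+^i)-E^N(x_+^j)|\le 2\|E^N\|_\infty=O(d_{\min})$ (or $\le C\,|x_+^i-x_+^j|$ via $\|\nabla E^N\|_\infty$), i.e. the middle estimate with $O(d_{\min})$ in place of the stated $O(R)$. The proposed refinement to $O(R)$ by ``tracking the error terms as in Corollary \ref{velocity_+}'' does not work as described: in that proof the dominant contribution to the error is the method-of-reflections remainder, which is $O(\sqrt{R})\max_{\alpha,i}|U_\alpha^i|$ (Propositions \ref{velocity_mr_bis} and \ref{convergence}); it carries no factor $R$, there is no reason for it to cancel in the difference $\dot{x}_+^i-\dot{x}_+^j$, and it is only $O(d_{\min})$ under \eqref{d_min+} (still much larger than $R$ even under \eqref{d_min+_bis}, since $\sqrt{R}\sim N^{-1/2}$ while $R\sim N^{-1}$). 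So with the estimates available in the paper the best you can honestly write in the middle line is $O(d_{\min})$ (equivalently $O(\sqrt{R})$). This discrepancy is harmless for the role the proposition plays --- the additive errors are only used as small perturbations in the control of the particle configuration --- but you should either state the middle estimate with $O(d_{\min})$ or actually prove that the reflections remainder is $O(R)$, which the paper's estimates do not provide.
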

We remark that the conservation of the infinite Wasserstein distance, which is initially of order $\frac{1}{N^{1/3}}$, ensures the control of the particle distance. Unfortunately, due to the log term appearing in Proposition \ref{inifnite_cv} we are not able to prove the conservation in time of the infinite Wasserstein distance.
\section{Existence, uniqueness and some stability properties}
In this section we present some existence, uniqueness and stability estimates. 
\begin{prpstn}\label{stab_F}
Let $p>3$. Given $F_0\in W^{2,p}(\mathbb{R}^3)$ and $u \in L^\infty(0,T;W^{3,p}(\mathbb{R}^3))$, there exists a time $T>0$ such that $F\in L^\infty(0,T;W^{2,p}(\mathbb{R}^3))$ is the unique local solution of 
\begin{equation}\label{eq:hyp_F}
\left\{
\begin{array}{rcll}
\partial_t F+ \nabla F \cdot ( \mathbb{A}^{-1}(F) \kappa g + u ) &=& \nabla u \cdot F,\,& \text{ on $[0,T] \times \mathbb{R}^3$},\\
F(0,\cdot) &=& F_0, \,& \text{ on $ \mathbb{R}^3$.}
\end{array}
\right.
\end{equation}
We have the following stability estimates
$$
\|F_1-F_2\|_{L^\infty(0,T;W^{1,p}(\mathbb{R}^3))}\leq C_1 T \|u_1-u_2\|_{L^\infty(0,T;W^{2,p}(\mathbb{R}^3))}e^{C_2 T},
$$
with $C_1$ and $C_2$ depending on $\|\mathbb{A}^{-1}\|_{2,\infty}$, $\|u_i\|_{L^\infty(0,T;W^{3,p}(\mathbb{R}^3))}$, $\|F_i\|_{L^\infty(0,T;W^{2,p}(\mathbb{R}^3))}$.
\end{prpstn}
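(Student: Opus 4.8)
The plan is to treat \eqref{eq:hyp_F} as a quasilinear transport equation, the transport field $b[F]:=\mathbb{A}^{-1}(F)\kappa g+u$ depending on the unknown through the composition $\mathbb{A}^{-1}(F)$, and to build the local solution by a fixed-point argument in a ball of $L^\infty(0,T;W^{2,p}(\mathbb{R}^3))$, exactly as in the proof of Proposition \ref{existence_FN}. First I would record the functional-analytic facts that make this work: since $p>3$ one has $W^{2,p}(\mathbb{R}^3)\hookrightarrow W^{1,\infty}(\mathbb{R}^3)\hookrightarrow C^1(\mathbb{R}^3)$ and $W^{1,p}$, hence $W^{2,p}$, is a Banach algebra; together with $\mathbb{A}^{-1}\in W^{2,\infty}$ and the chain rule this yields the composition bounds $\|\mathbb{A}^{-1}(G)\|_{W^{2,p}}\leq C(\|\mathbb{A}^{-1}\|_{2,\infty})(1+\|G\|_{W^{2,p}}^2)$ and the Lipschitz estimate $\|\mathbb{A}^{-1}(G_1)-\mathbb{A}^{-1}(G_2)\|_{W^{1,p}}\leq C(\|\mathbb{A}^{-1}\|_{2,\infty})(1+\|G_1\|_{W^{2,p}}+\|G_2\|_{W^{2,p}})\|G_1-G_2\|_{W^{1,p}}$, the latter being exactly where the second-order regularity of $\mathbb{A}^{-1}$ is used.

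Next I would define the map $\mathcal{A}:\bar F\mapsto F$ where $F$ solves the \emph{linear} transport equation $\partial_t F+\nabla F\cdot b[\bar F]=\nabla u\cdot F$, $F(0,\cdot)=F_0$. Since $b[\bar F]\in L^\infty(0,T;W^{2,p})\hookrightarrow L^\infty(0,T;C^1)$ has a Lipschitz-in-space characteristic flow $X$, with $\nabla X$ bounded as in \eqref{lipschitz_X}, and since along characteristics $s\mapsto F(s,X(s,t,x))$ solves the linear ODE $\tfrac{d}{ds}F=\nabla u(s,X(s,t,x))\,F$ with datum $F_0$, the map is well defined and $F$ inherits $W^{2,p}$ regularity. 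The crucial point is the a priori estimate: differentiating the equation once and twice, the genuinely top-order contributions are $b[\bar F]\cdot\nabla(\nabla^2F)$ (absorbed by the transport structure, producing a term $\lesssim\|\nabla b[\bar F]\|_\infty\|\nabla^2F\|_p$), $\nabla b[\bar F]\cdot\nabla^2F$ (with $\nabla b[\bar F]\in L^\infty$), and $\nabla^2 b[\bar F]\cdot\nabla F$, where $\nabla^2b[\bar F]\sim(\nabla^2\mathbb{A}^{-1})(\bar F)(\nabla\bar F)^{\otimes2}+(\nabla\mathbb{A}^{-1})(\bar F)\nabla^2\bar F+\nabla^2u\in L^p$ and $\nabla F\in L^\infty$; using the algebra property, all these are controlled linearly by $\|F\|_{W^{2,p}}$ with constants depending only on $\|\bar F\|_{W^{2,p}}$, $\|u\|_{W^{3,p}}$ and $\|\mathbb{A}^{-1}\|_{2,\infty}$. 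A Gronwall argument then gives $\|\mathcal{A}(\bar F)\|_{L^\infty(0,T;W^{2,p})}\leq(\|F_0\|_{W^{2,p}}+TC_1)e^{C_2T}$, so for $T$ small $\mathcal{A}$ maps the ball $\{\|F\|_{L^\infty(0,T;W^{2,p})}\leq\lambda\|F_0\|_{W^{2,p}}\}$ ($\lambda>1$ fixed) into itself; this is the bound \eqref{borne_F} used later.

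Then I would show $\mathcal{A}$ is a contraction in the weaker norm $L^\infty(0,T;W^{1,p})$: for $\bar F_1,\bar F_2$ in that ball, $G:=\mathcal{A}(\bar F_1)-\mathcal{A}(\bar F_2)$ solves a transport equation with forcing involving $\nabla F_2\cdot(b[\bar F_1]-b[\bar F_2])=\nabla F_2\cdot(\mathbb{A}^{-1}(\bar F_1)-\mathbb{A}^{-1}(\bar F_2))\kappa g$ and $\nabla u\cdot G$; a $W^{1,p}$ energy (or characteristics) estimate combined with the Lipschitz composition bound gives $\|G\|_{L^\infty(0,T;W^{1,p})}\leq CT\|\bar F_1-\bar F_2\|_{L^\infty(0,T;W^{1,p})}$, a contraction for $T$ small. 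Completeness of $L^\infty(0,T;W^{1,p})$, the uniform $W^{2,p}$ bound together with weak-$*$ compactness of $\nabla^2F_k$, and passage to the limit in the weak formulation then produce the unique local solution $F\in L^\infty(0,T;W^{2,p})$. Finally, for the stability estimate, given two solutions $F_1,F_2$ of \eqref{eq:hyp_F} on a common interval with the same datum $F_0$ and fields $u_1,u_2$, the difference $G=F_1-F_2$ satisfies
$$\partial_tG+\nabla G\cdot b[F_1]+\nabla F_2\cdot\bigl(b[F_1]-b[F_2]\bigr)=\nabla u_1\cdot F_1-\nabla u_2\cdot F_2,$$
with $b[F_1]-b[F_2]=(\mathbb{A}^{-1}(F_1)-\mathbb{A}^{-1}(F_2))\kappa g+(u_1-u_2)$; a $W^{1,p}$ estimate using the Lipschitz composition bound and Gronwall yields
$$\|F_1-F_2\|_{L^\infty(0,T;W^{1,p})}\leq C_1T\,\|u_1-u_2\|_{L^\infty(0,T;W^{2,p})}e^{C_2T},$$
with $C_1,C_2$ depending on $\|\mathbb{A}^{-1}\|_{2,\infty}$, $\|u_i\|_{L^\infty(0,T;W^{3,p})}$, $\|F_i\|_{L^\infty(0,T;W^{2,p})}$, and uniqueness being the case $u_1=u_2$. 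The hard part is purely the quasilinear structure: closing the $W^{2,p}$ a priori estimate requires seeing that the term $\nabla F\cdot\nabla^2b[F]$ — which through $\mathbb{A}^{-1}(F)$ looks as if it could cost two derivatives of $F$ in the wrong place — is in fact tamed by the $W^{1,p}$ algebra property ($p>3$) and the $C^1$-control of $F$, so that the estimate stays linear in $\|F\|_{W^{2,p}}$; everything else is a routine transport-equation computation.
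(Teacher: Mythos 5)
Your proposal is correct, and the estimates at its core are the same ones the paper uses; the difference is organizational. The paper first produces the solution at the Lipschitz level, by invoking the fixed-point argument of Proposition \ref{existence_FN} (linear transport with frozen drift, characteristics, contraction in $L^\infty(0,T;L^\infty)$), and only afterwards propagates $W^{2,p}$ regularity through the a priori $L^p$ energy estimates on $D^\alpha F$, $\alpha=0,1,2$, performed directly on the solution; stability is then obtained exactly as you do, via $L^p$ and $W^{1,p}$ energy estimates on $F_1-F_2$ plus Gronwall, with the same role for $\div u=0$, the embedding $W^{2,p}\hookrightarrow W^{1,\infty}$ ($p>3$), and $\mathbb{A}^{-1}\in W^{2,\infty}$ in handling $\mathbb{A}^{-1}(F_1)-\mathbb{A}^{-1}(F_2)$ and its gradient. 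You instead run the Picard iteration directly at the $W^{2,p}$ level (self-map of a ball, as in \eqref{borne_F}) and contract in the weaker norm $L^\infty(0,T;W^{1,p})$; this buys a self-contained construction in which the second-order estimates are carried out rigorously on the frozen-coefficient linear problems rather than formally on the solution, at the cost of redoing the low-regularity well-posedness that the paper simply borrows from Proposition \ref{existence_FN}. One small inaccuracy to fix: $b[\bar F]=\mathbb{A}^{-1}(\bar F)\kappa g+u$ does not belong to $W^{2,p}(\mathbb{R}^3)$ in general, since $\mathbb{A}^{-1}(\bar F(x))\to\mathbb{A}^{-1}(0)\neq 0$ at infinity, so the composition itself is not $L^p$; what is true, and all that your argument (and the paper's) actually uses, is $b[\bar F]\in L^\infty$ with $\nabla b[\bar F]\in L^\infty\cap L^p$ and $\nabla^2 b[\bar F]\in L^p$, which is exactly how you treat the top-order terms, so the slip is harmless.
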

\begin{proof}
Since $p>3$, we have $F_0 \in W^{2,p}(\mathbb{R}^3)\hookrightarrow W^{1,\infty}(\mathbb{R}^3)$ and $ u\in W^{2,\infty}(\mathbb{R}^3)$. We can apply the existence proof analogous to the existence proof of Proposition \ref{existence_FN} to get a unique solution $F\in L^\infty (0,T;W^{1,\infty}(\mathbb{R}^3))$ for a given $T>0$. It remains to show that $F\in L^\infty (0,T;W^{2,p}(\mathbb{R}^3))$ for a finite time interval. We have for $\alpha=0,1,2$
\begin{align*}
\partial_t D^\alpha F+ \nabla D^\alpha F \left( \mathbb{A}^{-1}(F) \kappa g + u \right)&=-\nabla F \cdot D^\alpha  \left( \mathbb{A}^{-1}(F) \kappa g + u \right)+(D^\alpha \nabla u) F + (\nabla u) D^\alpha F.
\end{align*}
Multiplying by $|D^\alpha F|^{p-1}$ and integrating by parts the second term using the fact that $\div(u)=0$, we get 
\begin{align*}
&\frac{1}{p} \frac{d}{dt} \int |D^\alpha F|^{p}= \frac{1}{p} \int  |D^\alpha F|^p  \div \left (\mathbb{A}^{-1}(F) \right)+ \nabla F \cdot  |D^\alpha F|^{p-1}\left( D^\alpha \left[\mathbb{A}^{-1}(F)\right] \kappa g + D^\alpha u \right)\\
&+ (D^\alpha \nabla u) F |D^\alpha F|^{p-1} + (\nabla u) D^\alpha F |D^\alpha F|^{p-1},\\
&  \lesssim \| F\|_{2,p}^{p}\left(\| \nabla \mathbb{A}^{-1}\|_\infty \|F\|_{1,\infty} + \|\nabla u\|_\infty \right)\\
&+\|D^\alpha F\|^{p-1}\left ( \|\mathbb{A}^{-1}\|_{2,\infty} +1 \right) \bigg(\|\nabla F\|_\infty \left\{ \|\nabla F\|_p + \|\nabla F\|_\infty\|\nabla F\|_p + \|\nabla^2 F\|_p+ \|D^\alpha u\|_p \right\}\\
& + \|F\|_\infty \|D^\alpha \nabla u\|_p\bigg) .
\end{align*}
Since $\|F\|_{1,\infty}\lesssim \|F\|_{2,p}$, $\|u\|_{1,\infty}\lesssim \|F\|_{2,p}$, we get up to a constant depending on $\|\mathbb{A}^{-1}\|_{2,\infty}$
\begin{align*}
\frac{d}{dt} \| D^\alpha F\|_p^{p}& \lesssim \|D^\alpha F\|_{p}^{p} \left(\|F\|_{2,p}+\|u\|_{3,p}\right)  +\|D^\alpha F\|_{p}^{p-1}\|F\|_{2,p} \left(\|F\|_{2,p}+\|u\|_{3,p} \right).
\end{align*}
Applying Young's inequality and summing over $\alpha=0,1,2$ we get 
$$
 \|F\|_{L^\infty(0,T; W^{2,p}(\mathbb{R}^3))} \lesssim \|F_0\|_{2,p}e^{ C(p,\|F\|_{2,p}, \|u\|_{3,p} ,\|\mathbb{A}^{-1}\|_{2,\infty}) T},
$$
which shows that $F\in L^\infty(0,T;W^{2,p}(\mathbb{R}^3))$ for a finite time $T>0$. Now consider two divergence free velocity fields $u_1, u_2 \in L^\infty(0,T;W^{3,p}(\mathbb{R}^3))$ and denote by $F_i$ the solution to \eqref{eq:hyp_F}. 
 We have
\begin{multline*}
\partial_t (F_1-F_2)+ (\nabla F_1-\nabla F_2) (\mathbb{A}^{-1}(F_1) \kappa g + u_1 )\\
= \nabla F_2 \left(\mathbb{A}^{-1}(F_1)- \mathbb{A}^{-1}(F_2)+u_1-u_2\right)+ ( \nabla u_1-\nabla u_2) F_1 + (F_1- F_2) \nabla u_2.
\end{multline*}
Multiplying by $|F_1-F_2|^{p-1}$ and integrating by parts the second term in the left hand side using the divergence free property of $u$, we get
\begin{align*}
\frac{d}{dt} \|F_1-F_2\|_p^p& \lesssim \|F_1-F_2\|_p^p \left( \|\nabla \mathbb{A}^{-1}\|_\infty (\|\nabla F_1\|_\infty+\|\nabla F_2\|_\infty) +\|\nabla u_2\|_\infty \right)\\
&+\| F_1-F_2\|_p^{p-1} \|u_1-u_2\|_{2,p}(\|\nabla F_1\|_\infty+\|\nabla F_2\|_\infty).
\end{align*}
For the derivative we have 
\begin{align*}
&\partial_t (\nabla F_1-\nabla F_2)+ \nabla (\nabla F_1-\nabla F_2) (\mathbb{A}^{-1}(F) \kappa g + u_1 )\\
&= - (\nabla F_1 -\nabla F_2)(\nabla \mathbb{A}^{-1}(F_1) \nabla F_1 \kappa g+\nabla u_1) + \nabla^2 F_2 \left(\mathbb{A}^{-1}(F_1)- \mathbb{A}^{-1}(F_2)+u_1-u_2\right)\\
&+\nabla F_2 \left( \left\{ \left[\nabla \mathbb{A}^{-1}(F_1)-\nabla \mathbb{A}^{-1}(F_2) \right] \nabla F_1 +\nabla \mathbb{A}^{-1}(F_2)(\nabla F_1-\nabla F_2) \right\} \kappa g + \nabla u_1 - \nabla u_2 \right)\\
&+(\nabla^2 u_1-\nabla^2 u_2) F_1 +(\nabla u_1-\nabla u_2) \nabla F_1 + \nabla u_2(\nabla F_1-\nabla F_2)  + \nabla^2 u_2( F_1-F_2).
\end{align*}
Using the same estimates as previously, we obtain 
$$
\frac{d}{dt} \|F_1 -F_2\|_{1,p}^p \leq C_1 \| F_1-F_2\|_{1,p}^p+ C_2 \| F_1-F_2\|_{1,p}^{p-1} \| u_1-u_2\|_{2,p},
$$
where $C_1, C_2$ depend on $\|\mathbb{A}^{-1}\|_{2,\infty}$, $\|u_i\|_{3,p}$, $\|F_i\|_{2,p}$. We conclude by integrating with respect to time and apply Gronwall's inequality.
\end{proof}
\begin{prpstn}\label{stab_rho}
Let $T>0$, $p>3$. We consider $\rho_0 \in  W^{1,p}(\mathbb{R}^3)$, $u\in L^\infty(0,T; W^{3,p}(\mathbb{R}^3))$ and $F\in  L^\infty(0,T; W^{2,p}(\mathbb{R}^3))$. There exists a unique solution $\rho \in L^\infty(0,T;  W^{1,p}(\mathbb{R}^3))$ to the transport equation
\begin{equation}\label{eq:transportrho}
\left \{
\begin{array}{rcl}
\partial_t \rho + \div((\mathbb{A}^{-1}(F) \kappa g+ u )\rho )&=&0,\\
\rho(0,\cdot)&=& \rho_0,
\end{array}
\right.
\end{equation}
for all $T>0$. $\rho$ satisfies 
$$
\| \rho(t,\cdot)\|_{L^\infty(0,T;W^{1,p})} \leq  \|\rho_0\|_{1,p} e^{C t},
%& \| \rho(t,\cdot)\|_{\infty} \leq  T \|\rho_0\|_{\infty} e^{C_2t},& \| \rho(t,\cdot)\|_{1} \leq  \|\rho_0\|_{1} e^{C_2t},
$$
where $C$ depends on $p$, $\|\mathbb{A}^{-1}\|_{2,\infty}$, $\|F\|_{L^\infty(0,T; W^{2,p}(\mathbb{R}^3))} $, $\|u\|_{L^\infty(0,T; W^{2,p}(\mathbb{R}^3))}$. In addition, we have the following stability estimate 
$$
\|\rho_1 -\rho_2\|_{L^\infty(0,T;L^p(\mathbb{R}^3))} \leq C_1 T \left(  \| u_1-u_2\|_{L^\infty(0,T;W^{1,p}(\mathbb{R}^3))}+ \| F_1-F_2\|_{L^\infty(0,T;W^{1,p}(\mathbb{R}^3))} \right)  e^{C_2 T},
$$
with constants depending on $\|\mathbb{A}^{-1}\|_{1,\infty}$, $\| \rho_i\|_{L^\infty(0,T; W^{1,p}(\mathbb{R}^3))}$,$\|F_i\|_{L^\infty(0,T; W^{1,p}(\mathbb{R}^3))}$.
\end{prpstn}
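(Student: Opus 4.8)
The plan is to treat \eqref{eq:transportrho} as a linear conservative transport equation with transport field $b:=\mathbb{A}^{-1}(F)\kappa g+u$ and to proceed exactly as in the proof of Proposition \ref{stab_F}: first produce a solution of low regularity by characteristics, then upgrade it to $W^{1,p}$ by an energy estimate, and finally prove stability by an $L^p$ estimate on the difference of two solutions. The preliminary observation is that $b$ is admissible: since $p>3$ we have $W^{1,p}(\mathbb{R}^3)\hookrightarrow L^\infty(\mathbb{R}^3)$ and $W^{2,p}(\mathbb{R}^3)\hookrightarrow W^{1,\infty}(\mathbb{R}^3)$, so from $F\in L^\infty(0,T;W^{2,p})$, $u\in L^\infty(0,T;W^{2,p})$ and $\mathbb{A}^{-1}\in W^{2,\infty}$ one gets, via $\nabla(\mathbb{A}^{-1}(F))=\nabla\mathbb{A}^{-1}(F)\nabla F$, that $b\in L^\infty(0,T;W^{1,\infty})$ and, because $\div u=0$, that $\div b=\div(\mathbb{A}^{-1}(F)\kappa g)$ with $\nabla(\div b)\in L^\infty(0,T;L^p)$ (the only terms involved being $\nabla^2F\in L^p$ and $(\nabla F)^2$, dominated by $\|\nabla F\|_\infty|\nabla F|\in L^p$). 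Hence the flow of $b$ is globally defined and bi-Lipschitz, and $\rho(t,\cdot):=X(t,0,\cdot)_{\#}\rho_0$ is the unique solution in, say, $L^\infty(0,T;L^1\cap L^\infty)$; no fixed point is needed here, the equation being linear.

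To upgrade to $W^{1,p}$ and obtain the quantitative bound, I would argue by $L^p$ energy estimates. Testing the equation against $p|\rho|^{p-2}\rho$ and integrating by parts gives $\frac{d}{dt}\|\rho\|_p^p=(1-p)\int|\rho|^p\div b$, hence $\|\rho(t)\|_p\le\|\rho_0\|_p\,e^{\frac{p-1}{p}\|\div b\|_{L^\infty(0,t;L^\infty)}t}$. Differentiating, $\partial_t\nabla\rho+(b\cdot\nabla)\nabla\rho=-(\nabla b)^{\!\top}\nabla\rho-\nabla(\div b)\rho-(\div b)\nabla\rho$; pairing with $|\nabla\rho|^{p-2}\nabla\rho$ and integrating the transport term by parts yields $\frac{d}{dt}\|\nabla\rho\|_p^p\lesssim\|\nabla b\|_\infty\|\nabla\rho\|_p^p+\|\nabla(\div b)\|_p\|\rho\|_\infty\|\nabla\rho\|_p^{p-1}$. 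Using $\|\rho\|_\infty\lesssim\|\rho\|_{W^{1,p}}$ (again $p>3$), the $L^p$ bound just obtained, Young's inequality, and the control of $b$, $\div b$, $\nabla(\div b)$ in the relevant norms by $\|\mathbb{A}^{-1}\|_{2,\infty}$, $\|F\|_{L^\infty(0,T;W^{2,p})}$, $\|u\|_{L^\infty(0,T;W^{2,p})}$, this closes a Gronwall inequality for $\|\rho\|_{W^{1,p}}^p$ and produces $\|\rho(t)\|_{W^{1,p}}\le\|\rho_0\|_{1,p}e^{Ct}$ with $C$ of the stated dependence; the same $L^p$ estimate applied to the difference of two solutions with identical data gives uniqueness. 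This step also justifies, by a standard regularization argument, that the solution built above actually lies in $L^\infty(0,T;W^{1,p})$.

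For stability, set $b_i:=\mathbb{A}^{-1}(F_i)\kappa g+u_i$ and $\delta\rho:=\rho_1-\rho_2$, so that $\partial_t\delta\rho+\div(b_1\delta\rho)=-\div\big((b_1-b_2)\rho_2\big)$, with $\delta\rho(0)=0$ since $\rho_0$ is common. The $L^p$ estimate gives $\frac{d}{dt}\|\delta\rho\|_p^p\lesssim\|\div b_1\|_\infty\|\delta\rho\|_p^p+\|\delta\rho\|_p^{p-1}\big(\|b_1-b_2\|_\infty\|\nabla\rho_2\|_p+\|\div(b_1-b_2)\|_p\|\rho_2\|_\infty\big)$. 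Here $\|b_1-b_2\|_\infty\lesssim\|\nabla\mathbb{A}^{-1}\|_\infty\|F_1-F_2\|_\infty+\|u_1-u_2\|_\infty\lesssim\|F_1-F_2\|_{W^{1,p}}+\|u_1-u_2\|_{W^{1,p}}$, and, splitting $\nabla\mathbb{A}^{-1}(F_1)\nabla F_1-\nabla\mathbb{A}^{-1}(F_2)\nabla F_2$ in the usual telescoping way, $\|\div(b_1-b_2)\|_p\lesssim\|\mathbb{A}^{-1}\|_{2,\infty}(1+\|F_2\|_{W^{1,p}})\|F_1-F_2\|_{W^{1,p}}$. Integrating in time and applying Gronwall then yields the advertised bound $\|\rho_1-\rho_2\|_{L^\infty(0,T;L^p)}\le C_1T\big(\|u_1-u_2\|_{L^\infty(0,T;W^{1,p})}+\|F_1-F_2\|_{L^\infty(0,T;W^{1,p})}\big)e^{C_2T}$, with $C_1,C_2$ depending on $\|\mathbb{A}^{-1}\|_{2,\infty}$, $\|\rho_i\|_{L^\infty(0,T;W^{1,p})}$ and $\|F_i\|_{L^\infty(0,T;W^{1,p})}$.

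I do not expect a genuine obstacle; the whole argument is the transport-equation toolbox already used for Proposition \ref{stab_F}. The one point that has to be handled with care is the $W^{1,p}$ a priori estimate: one must absorb $\|\rho\|_\infty$ through $W^{1,p}\hookrightarrow L^\infty$ and must verify $\nabla(\div b)\in L^p$. The latter is precisely why $F$ is taken in $W^{2,p}$, and why the incompressibility $\div u=0$ is essential — it prevents $\div b$ from seeing $\nabla u$, so that only the $W^{2,p}$-regularity of $u$ (rather than $W^{3,p}$) enters this particular estimate.
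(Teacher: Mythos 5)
Your argument is correct, and the quantitative part (the $L^p$ and $W^{1,p}$ energy estimates, the use of $\div u=0$ so that $\div b=\div(\mathbb{A}^{-1}(F)\kappa g)=-\kappa|g|\,\nabla\mathbb{A}^{-1}_3(F)\cdot\nabla F$ involves only $F$, the Sobolev embeddings for $p>3$, and the Gronwall closure for both the a priori bound and the stability estimate) coincides with what the paper does. Where you genuinely diverge is the existence mechanism: the paper does not use characteristics directly, but sets up a fixed point for the map $\rho\mapsto A(\rho)$, where $A(\rho)$ solves the non-conservative transport equation $\partial_t A(\rho)+\nabla A(\rho)\cdot(\mathbb{A}^{-1}(F)\kappa g+u)=\bigl(\nabla\mathbb{A}^{-1}_3(F)\cdot\nabla F\,\kappa|g|\bigr)\rho$ with the zeroth-order term frozen as a source, invokes DiPerna--Lions renormalization for that linear equation, and then gets contraction on a short time interval (global existence following from the linear a priori bound); you instead exploit that the full equation is already linear with a Lipschitz field $b\in L^\infty(0,T;W^{1,\infty})$, define $\rho$ by the flow (pushforward, or equivalently the Jacobian formula), and upgrade to $W^{1,p}$ by the same energy estimates plus a regularization step. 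Your route is the more economical one here — no fixed point or DiPerna--Lions machinery is needed since $b$ is Lipschitz — while the paper's formulation mirrors the fixed-point scheme it uses elsewhere (Propositions \ref{existence_FN} and \ref{stab_F}) and delivers the exponential bound directly from the frozen-source estimate. Two cosmetic points to fix if you write this up: $\rho_0\in W^{1,p}$ with $p>3$ gives $\rho_0\in L^p\cap L^\infty$ but not $L^1$, so state uniqueness in $L^\infty(0,T;L^p)$ (the $L^p$ difference estimate gives exactly that); and in your bound for $\|\div(b_1-b_2)\|_p$ the telescoping produces $\|\nabla F_1\|_p$ (or $\|\nabla F_2\|_p$, depending on how you split), which is harmless since the constants are allowed to depend on both $\|F_i\|_{W^{1,p}}$.
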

\begin{rmrk}\label{rem:supp_comp}
If we assume in addition that $\rho_0$ is compactly supported then classical transport theory ensures that $\rho(t,\cdot)$ is compactly supported and using the characteristic flow, which is well defined since $F$, $u \in W^{1,\infty}$, one can show that 
$$
\diam(\supp (\rho(t,\cdot))) \leq \diam(\supp (\rho_0))e^{Ct},
$$
with $C=C(\|\nabla \mathbb{A}^{-1}\|_\infty, \|\nabla F\|_{L^\infty(0,t;L^\infty(\mathbb{R}^3))}, \|\nabla u\|_{L^\infty(0,t;L^\infty(\mathbb{R}^3))})$.
\end{rmrk}
\begin{proof}
Since $g=-|g|e_3$, we have the following formula
$$
\div(\mathbb{A}^{-1}(F) \kappa g ) = -\nabla \mathbb{A}^{-1}_3(F) \cdot \nabla F \kappa |g|,
$$
where $\mathbb{A}^{-1}_3$ is the third column of $\mathbb{A}^{-1}$. Note that since $p>3$, we have the following Sobolev embedding
\begin{eqnarray}\label{embedding}
\|F\|_{1,\infty} \lesssim \|F\|_{2,p}, & \|u\|_{1,\infty} \lesssim \|u\|_{2,p},& \|\rho\|_\infty \lesssim \|\rho\|_{1,p}.
\end{eqnarray}
The idea is to apply a fixed point argument. We define the operator $A$ which maps any $\rho \in L^\infty(0,T;W^{1,p})$ to the unique density $A(\rho)$ solution of 
\begin{equation}\label{eq:pointfixe}
\partial_t A(\rho)+ \nabla A(\rho) \cdot (\mathbb{A}^{-1}(F) \kappa g + u)=  \left (\nabla \mathbb{A}^{-1}_3(F) \cdot \nabla F \kappa |g| \right ) \rho.
\end{equation}
Thanks to \eqref{embedding}, $u\in W^{1,\infty}(\mathbb{R}^3)$ and $F\in W^{1,\infty}(\mathbb{R}^3)$, hence DiPerna-Lions renormalization theory ensures the existence of $\mathcal{A}(\rho)\in L^\infty(0,T;L^p(\mathbb{R}^3))$. Multiplying \eqref{eq:pointfixe} by $|A(\rho)|^{p-1}$, integrating by parts and using Young's inequality we get
\begin{align*}
\frac{1}{p} \|A(\rho)\|_p^p  &\leq \frac{1}{p} \|\rho_0\|_p^p + \frac{1}{p}\int_0^t \|A(\rho)\|_p^p \|\mathbb{A}^{-1}\|_\infty \|\nabla F\|_\infty + \int_0^t\|\mathbb{A}^{-1}\|_\infty \|\nabla F\|_\infty \|\rho\|_p \|A(\rho)\|_p^{p-1}, \\
&\leq \frac{1}{p} \|\rho_0\|_p^p+ C\int_0^t  \left(  \frac{1}{p} \|A(\rho)\|_p^p +  \frac{1}{p}\|\rho\|_p^p + \frac{p-1}{p}\|A(\rho)\|_p^p \right),\\
&\leq  \frac{1}{p} \|\rho_0\|_p^p+ C \int_0^t  \|A(\rho)\|_p^p + \frac{C}{p} t \|\rho\|_{L^\infty(L^p)}^p
\end{align*} 
with $C=C(\|\mathbb{A}^{-1}\|_\infty, \|\nabla F\|_{L^\infty(0,T;L^\infty(\mathbb{R}^3))})$. Hence, Gronwall's inequality yields
$$
\|A(\rho)\|_p \leq (\|\rho_0\|_p+ T C \|\rho\|_p) e^{Ct}.
$$
Moreover, we have
\begin{align*}
&\partial_t \nabla  A(\rho)+ \nabla  (\nabla  A(\rho))\cdot(\mathbb{A}^{-1}(F) \kappa g + u) \\
&=- \nabla A(\rho) \nabla  (\mathbb{A}^{-1}(F) \kappa g + u)+ \nabla^2 \mathbb{A}^{-1}_3(F)\kappa |g| \nabla F \nabla F \rho \\
&+\nabla \mathbb{A}^{-1}_3(F) \kappa |g| \nabla^2 F \rho+ \nabla \mathbb{A}^{-1}_3(F) \cdot \nabla F \kappa |g| \nabla \rho.
\end{align*}
Multiplying by $|\nabla  A(\rho)|^{p-1}$ and reproducing the same computations as before we get
$$
\|\nabla A(\rho)\|_p \leq (\|\nabla \rho_0\|_p+ T C_1\|\rho\|_{1,p}) e^{C_2 t},
$$
where we used \eqref{embedding}. The constants $C_1, C_2$ depend on$\|u\|_{L^\infty(0,T; W^{2,p}(\mathbb{R}^3))}$, $\|F\|_{L^\infty(0,T; W^{2,p}(\mathbb{R}^3))}$,  $\|\mathbb{A}^{-1}\|_{2,\infty}$, $p$ and $\|\rho\|_{L^\infty(0,T; W^{1,p}(\mathbb{R}^3))}$. Gathering the two estimates we obtain
\begin{equation}\label{eq:globalex}
\| A(\rho))\|_{L^\infty(0,T;W^{1,p})} \leq (\| \rho_0\|_{1,p}+ T C_1\|\rho\|_{1,p}) e^{C_2 T}.
\end{equation}
Given $\rho_1,\rho_2$, since equation \eqref{eq:pointfixe} is linear, $A(\rho_1)-A(\rho_2)$ satisfies the same equation with $\rho_0=0$. Consequently, for $T>0$ small enough, estimate \eqref{eq:globalex} shows that the mapping $A$ is a contraction and hence there exists a unique fixed point. Estimate \eqref{eq:globalex} shows also global existence. \\
%Since $\rho_0 \in L^1 \cap L^\infty$ we also have that $\rho \in L^\infty(0,T;L^1 \cap L^\infty)$. Indeed, passing through the limit when $p \to \infty$ in 
%\begin{eqnarray*}
%\|\rho\|_{L^\infty(0,T;L^p)} \leq \|\rho_0\|_p e^{Ct}, & C=\|\mathbb{A}^{-1}\|_\infty \|\nabla F\|_{L^\infty(0,T;L^\infty)},
%\end{eqnarray*}
%we get $\rho \in L^\infty(0,T;W^{1,\infty})$. Analogously, one can multiply the transport equation by $sign(\rho)$ to show that $ \rho \in L^\infty(0,T;L^1)$. \\
Let $u_i \in L^\infty(0,T,W^{3,p}(\mathbb{R}^3))$ and $F_i\in L^\infty(0,T,W^{2,p}(\mathbb{R}^3))$ for $i=1,2$. Denote by $\rho_i$ the unique solution to equation \eqref{eq:transportrho}. We have 
\begin{align*}
&\partial_t( \rho_1-\rho_2)+\nabla (\rho_1-\rho_2) \cdot ( \mathbb{A}^{-1}(F_1) \kappa g + u_1) \\
&=- \nabla \rho_2 \cdot \left( [ \mathbb{A}^{1}(F_1) - \mathbb{A}^{-1}(F_2)]\kappa g + u_1-u_2 \right)\\
&+ (\rho_1-\rho_2) \nabla \mathbb{A}^{-1}_3(F_1) \kappa |g| \\
&+ \rho_1 \left( \left[(\nabla \mathbb{A}^{-1}_3(F_1)-\nabla \mathbb{A}^{-1}_3(F_2) )\right ] \nabla F_1 + \nabla \mathbb{A}^{-1}_3(F_2)(\nabla F_1 - \nabla F_2 ) \right) \kappa |g|.
\end{align*}
Multiplying by $|\rho_1-\rho_2|^{p-1}$ and integrating we get
$$ 
\frac{d}{dt} \|\rho_1-\rho_2\|^{p}_p \lesssim C_1 \|\rho_1-\rho_2\|_p^p+C_2 \left( \| u_1-u_2\|_\infty + \|F_1-F_2\|_\infty+ \|\nabla F_1 - \nabla F_2 \|_p \right)\|\rho_1-\rho_2\|_p^{p-1},
$$
with constants depending on $\|\mathbb{A}^{-1}\|_{1,\infty}$, $\| \rho_i\|_{1,p}$,$\|F_i\|_{1,p}$. We conclude using again the embedding $\|F_1-F_2\|_\infty \leq C \|F_1-F_2\|_{1,p}$ and analogously for $\|u_1-u_2\|_\infty$.
\end{proof}
\section*{Acknowledgement}
The author would like to thank Matthieu Hillairet for introducing the subject and sharing his experience for overcoming the difficulties during this research.

%%-----------------------------
%%      your bibliography

%%-----------------------------
\end{document}